\newcommand{\tblue}[1]{{\color{blue}{#1}}}
\newtheorem{theorem}{Theorem}[]
\newcommand{\cout}[1]{}
\newcommand{\x}{x}
\newcommand{\D}{\mathrm{d}}
\newcommand{\Lc}{\mathcal{L}}
\newcommand{\Mc}{\mathcal{M}}
\newcommand{\Pc}{\mathcal{P}}
\newcommand{\Qc}{\mathcal{Q}}
\newcommand{\Rc}{\mathcal{R}}
\newcommand{\Sc}{\mathcal{S}}
\newcommand{\Tc}{\mathcal{T}}
\newcommand{\Vc}{\mathcal{V}}
\newcommand{\Xc}{\mathcal{X}}
\newcommand{\Zc}{\mathcal{Z}}
\newcommand{\Nb}{\mathbb{N}}
\newcommand{\Rb}{\mathbb{R}}
\newcommand{\Sb}{\mathbb{S}}
\newcommand{\vev}{\textbf{\textit{e}}}
\newcommand{\vf}{\textbf{\textit{f}}}
\newcommand{\vg}{\textbf{\textit{g}}}
\newcommand{\vx}{\textbf{\textit{x}}}
\newcommand{\vy}{\textbf{\textit{y}}}
\newcommand{\vu}{\textbf{\textit{u}}}
\newcommand{\vv}{\textbf{\textit{v}}}
\newcommand{\vgamma}{\pmb{\gamma}}
\newcommand{\vxi}{\pmb{\xi}}
\newcommand{\Beq}{\begin{equation}}
\newcommand{\Eeq}{\end{equation}}
\newcommand{\beq}{\begin{equation*}}
\newcommand{\eeq}{\end{equation*}}
\newcommand{\bal}{\begin{align}}
\newcommand{\eal}{\end{align}}
\newtheorem{defn}{Definition}
\newtheorem{rem}{Remark}
\newtheorem{cor}{Corollary}
\newtheorem{lem}{Lemma}
\title{\vspace{-1cm} V-line 2-tensor tomography in the plane}
\author{Gaik Ambartsoumian\thanks{Department of Mathematics, University of Texas at Arlington, Arlington, TX, United States of America.  \url{gambarts@uta.edu}}\and Rohit Kumar Mishra\thanks{Mathematics Discipline, Indian Institute of Technology, Gandhinagar, Gujarat, India. \url{rohit.m@iitgn.ac.in}}
\and Indrani Zamindar\thanks{Mathematics Discipline, Indian Institute of Technology, Gandhinagar, Gujarat, India. \url{indranizamindar@iitgn.ac.in}} }
\begin{document}
\date{}
\maketitle
\begin{abstract}
In this article, we introduce and study various V-line transforms (VLTs) defined on symmetric $2$-tensor fields in $\mathbb{R}^2$. The operators of interest include the longitudinal, transverse, and mixed VLTs, their integral moments, and the star transform. With the exception of the star transform, all these operators are natural generalizations to the broken-ray trajectories of the corresponding well-studied concepts defined for straight-line paths of integration. We characterize the kernels of the VLTs and derive exact formulas for reconstruction of tensor fields from various combinations of these transforms. The star transform on tensor fields is an extension of the corresponding concepts that have been previously studied on vector fields and scalar fields (functions). We describe all injective configurations of the star transform on symmetric 2-tensor fields and derive an exact, closed-form inversion formula for that operator.
\end{abstract}
\vspace{-5mm}

\section{Introduction}\label{Introduction}
The inverse problems of recovering a tensor field in $\Rb^n$ (or a Riemannian manifold) from various combinations of integral operators, such as the longitudinal, transverse, and momentum ray transforms applied to that field, have been studied extensively by various authors. In two-dimensional spaces (for both Euclidean and non-Euclidean settings) these questions have been addressed in the works \cite{derevtsov3,krishnan2019solenoidal, Monard2, Paternain_Salo_Uhlmann_2014, SADIQ2016} and in some of the references mentioned there. Numerous other studies have examined the injectivity, invertibility, and range characterization of the longitudinal ray transform and its moments in higher-dimensional Euclidean spaces (see \cite{krishnan2019momentum,krishnan2020momentum,Louis_2022,Rohit_Suman_2021,Rohit_Suman_2022,Sharafutdinov_Book} and the references therein). Support theorems for integral moments of longitudinal geodesic ray transform and transverse geodesic ray transforms for symmetric $m$-tensor field defined on a Riemannian manifold have been studied in \cite{abhishek2020support, abhishek2019support}.
The reconstruction problem for both the longitudinal and transverse ray transforms in $n$-dimensions ($n \geq 3$) is overdetermined, therefore one would like to have a reconstruction algorithm using only an $n$-dimensional restriction of the data. Such questions related to  restricted data are also well-studied in different settings; for instance, see \cite{Denisiuk_2023,Denisjuk_Paper,krishnan2018microlocal,VRS,Vertgeim2000} and references therein.
These problems, sometimes collectively called tomography of tensor fields, have applications in plasma physics, polarization imaging, prediction of earthquakes, and many other areas related to the propagation of radiation or waves through anisotropic media. 

The transforms studied in the references mentioned above map a tensor field to various families of its integrals along straight lines (or geodesics). Physically, the operators integrating a tensor field (alternatively, a vector field or a function) along straight lines or rays often represent the ballistic regime of particle transport, describing the unimpeded flow of charged particles. At the same time, the mesoscopic regime of particle transport, allowing single-scattering events, leads to the consideration of operators integrating along broken-rays (also called V-lines) (e.g. see \cite{FMS-PhysRev-10, florescu2018, FMS-09, Kats_Krylov-15}). The vertex of each V-line corresponds to a scattering location and, therefore, is inside the support of the image field. Generalized Radon-type transforms, mapping a \textit{scalar function} to a family of its integrals along V-lines with a vertex inside the image support, have attracted substantial interest of researchers during the last two decades (see \cite{amb-book} and the references therein). Numerous interesting results have been obtained about the injectivity, range description, inversion, stability and other properties of the V-line transform, as well as the closely related star transform on scalar fields in $\Rb^2$ (see \cite{amb_2012, amb-lat_2019, Amb_Lat_star, Ambartsoumian_Moon_broken_ray_article, ambartsoumian2016numerical, Florescu-Markel-Schotland, Gouia_Amb_V-line, Kats_Krylov-13, Sherson, walker2019broken, ZSM-star-14} and references therein). 

The V-line and star transforms of \textit{vector fields} in $\Rb^2$ have been analyzed in a pair of recent works \cite{Gaik_Mohammad_Rohit, Gaik_Mohammad_Rohit_2023}. The authors extended the notions of the longitudinal, transverse, and momentum ray transforms from operators integrating vector fields along straight lines to those integrating along broken-ray trajectories. They described the kernels of these operators and derived several exact formulas for recovering a vector field from certain combinations of the aforementioned transforms. In addition to that, the notion of the star transform was extended from scalar fields (functions) to vector fields in $\Rb^2$, and an exact closed form inversion formula was derived for that operator. Building up on the previous results, the current work extends the notions of the V-line and star transforms to the \textit{symmetric 2-tensor fields} in $\Rb^2$. We present explicit characterizations of the kernels of the VLTs and derive exact formulas for reconstruction of tensor fields from various combinations of those transforms. In addition to that, we describe all injective configurations of the star transform and derive an exact, closed-form inversion formula for that operator.

The paper is organized as follows. In Section \ref{sec:def}, we define various V-line transforms on symmetric 2-tensor fields in $\mathbb{R}^2$, introduce the necessary notations, and prove some preliminary results. In Section \ref{sec:Kernels}, we characterize the kernels of the operators of our interest. In Section \ref{sec:Special}, we provide reconstruction results for some special kinds of tensor fields. In Section \ref{sec: reconstruction L, T, and M}, we show the full reconstruction of a symmetric 2-tensor field from its longitudinal, transverse, and mixed V-line transforms. In Section \ref{sec:moments}, we demonstrate the full reconstruction of a symmetric 2-tensor field from the longitudinal, transverse and/or mixed V-line transforms combined with one of their first moment transforms. In Section \ref{sec:Star}, we introduce the star transform  on symmetric 2-tensor fields in the plane and derive a closed-form inversion formula for that operator. Section \ref{sec:add_remarks} contains some additional remarks. We finish the paper with acknowledgements in Section \ref{sec:acknowledge}.

%%%%%%%%%%%%%%%%%%%%%%%%%%%%%%%%%%%%%%%%%%%%%%%%%%%%%%
\section{Definitions, notations, and preliminary results}\label{sec:def}
Throughout this paper, we use bold font letters to denote vectors and 2-tensors in $\mathbb{R}^2$ (e.g. $\vx$, $\textbf{\textit{u}}$, $\textbf{\textit{v}}$, $\vf$, etc.), and regular font letters to denote scalars (e.g. $x_i$, $h$, $f_{ij}$, etc). We denote by $\vx\cdot\vy$ the usual dot product between vectors $\vx$ and $\vy$.

Let $S^2(\Omega)$ denote the space of symmetric $2$-tensor fields defined in a domain $\Omega\subseteq\mathbb{R}^2$, and $C_c^2\left(S^2;\Omega\right)$ be the space of twice continuously differentiable, compactly supported, symmetric $2$-tensor fields. In local coordinates, $\vf \in C_c^2\left(S^2;\Omega\right)$ can be expressed as 
\begin{align}\label{eq:coordinate rep of f}
  \vf(x) = f_{ij}(x) \,dx^{i} dx^{j},  
\end{align}
where $f_{ij}(x)$ are compactly supported $C^2$ functions such that  $f_{ij} =  f_{ji}$ for all indices $i$ and $j$. The Einstein summation convention is assumed throughout the article.

The scalar product in $S^2(\Omega)$ is defined by the formula
\begin{align}\label{eq:inner product in Sm}
\langle \vf, \textbf{\textit{g}}\rangle =  f_{ij}g^{ij} = f_{11}g^{11} + 2 f_{12}g^{12} + f_{22}g^{22}.
\end{align}
\noindent For a scalar function $V(x_1, x_2)$  and a vector field $\vf =(f_1,f_2)$, we use the notations 
\begin{align}\label{eq: definition of div and curl}
\D V = \left(\frac{\partial V}{\partial x_1}, \frac{\partial V}{\partial x_2}\right), \ \  \D^\perp V = \left(-\frac{\partial V}{\partial x_2}, \frac{\partial V}{\partial x_1}\right), \ \    \delta \vf =  \frac{\partial f_1}{\partial x_1}+ \frac{\partial f_2}{\partial x_2},\ \  
\delta^\perp \vf =  \frac{\partial f_2}{\partial x_1}- \frac{\partial f_1}{\partial x_2}.
\end{align}

The operators $\D$ and $\delta$ are the classical gradient and divergence operators, respectively, while  the operators $\D^\perp$  and $\delta^\perp$ are the corresponding ``orthogonal" operators. These operators can be generalized naturally to higher-order tensor fields in the following way\footnote{These generalized differential operators have been defined earlier in the straight line setup in \cite{derevtsov3}.}: 
\begin{align}
(\D \vf)_{ij} &= \frac{1}{2}\left(\frac{\partial f_i}{\partial x_j} +\frac{\partial f_j}{\partial x_i}\right), \quad \vf \mbox{ is a vector field,}\label{eq:symm-der} \\
(\D^\perp \vf)_{ij} &= \frac{1}{2}\left((-1)^j\frac{\partial f_i}{\partial x_{3-j}} + (-1)^i \frac{\partial f_j}{\partial x_{3-i}}\right), \quad \vf \mbox{ is a vector field,}\\
(\delta \vf)_i &= \frac{f_{i1}}{\partial x_1} + \frac{f_{i2}}{\partial x_2} =  \frac{f_{ij}}{\partial x_j}, \quad \vf \mbox{ is a symmetric 2-tensor field,}\\
(\delta^\perp \vf)_i &= - \frac{f_{i1}}{\partial x_2} + \frac{f_{i2}}{\partial x_1} =  (-1)^j\frac{f_{ij}}{\partial x_{3-j}}, \quad \vf \mbox{ is a symmetric 2-tensor field.} \label{eq:def of deltaprep}
\end{align}
\begin{rem}
    The generalized operators $\D$ and $\delta$ defined above are known as symmetrized derivative/inner differentiation and divergence on tensor fields, respectively (e.g., see \cite[p. 25]{Sharafutdinov_Book}). 
\end{rem}
Let  $\vu$ and $\vv$ be two linearly independent unit vectors in $\mathbb{R}^2$. For $\vx \in \mathbb{R}^2$, the rays emanating from $\vx$ in directions $\vu$ and $\vv$ are denoted by  $L_{\vu}(\vx)$ and $L_{\vv}(\vx)$ respectively, i.e.
$$ L_{\vu}(\textbf{\textit{x}}) = \left\{\textbf{\textit{x}} +t \textbf{\textit{u}}: 0 \leq t < \infty\right\} \quad \mbox{ and } \quad  L_{\vv}(\textbf{\textit{x}}) = \left\{\textbf{\textit{x}} +t \textbf{\textit{v}}: 0 \leq t < \infty\right\}.$$
A V-line with the vertex $\textbf{\textit{x}}$ is the union of rays $L_{\vu}(\textbf{\textit{x}})$ and $L_{\vv}(\textbf{\textit{x}})$. Through the rest of the article we will assume that  $\textbf{\textit{u}}$ and $\textbf{\textit{v}}$ are fixed, i.e. all V-lines have the same ray directions and can be parametrized simply by the coordinates $\textbf{\textit{x}}$ of the vertex (see Figure \ref{fig1a}). 
\begin{figure}[H]
\begin{center}
\begin{subfigure}{.45\textwidth}
\centering
 \includegraphics[height=3.8cm]{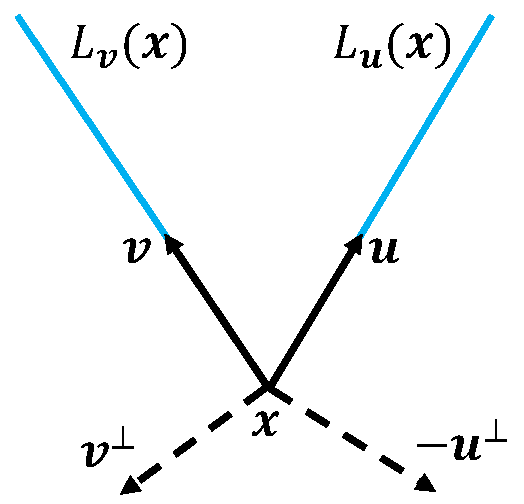} 
 \caption{A V-line with vertex at $\vx$, ray directions $\vu$, $\vv$ and outward normals $-\vu^\perp$, $\vv^\perp$.}
  \label{fig1a}
\end{subfigure} \qquad 
\begin{subfigure}{.45\textwidth}
\centering
\includegraphics[height=3.8cm]{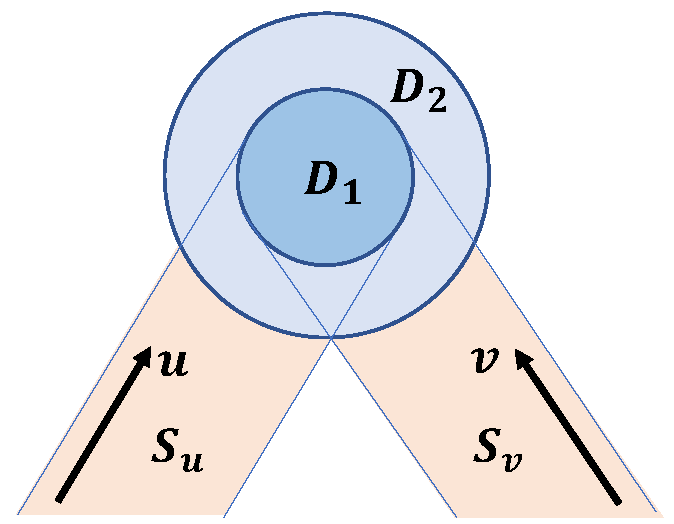}
\caption{A sketch of the compact support of  $\vf$ and the unbounded support of $\Lc\vf$, $\Tc\vf$, $\Mc\vf$.} \label{fig1b}
\end{subfigure}
\end{center}
\vspace{-5mm}
\caption{From \cite{Gaik_Mohammad_Rohit}.} 
\label{fig1} 
\end{figure}

\begin{defn}
 The \textbf{divergent beam transform} $\mathcal{X}_{\vu}$ of function $h$ at $\vx\in \mathbb{R}^2$ in the direction $\vu\in\Sb^1$ is defined as:
 \begin{equation}\label{def:DivBeam}
   \mathcal{X}_{\vu}h(\vx) =  \int_{0}^{\infty} h(\vx+t \vu)\,dt.
 \end{equation}
 \end{defn}
\begin{defn}
Let $k\ge0$ be in an integer. The \textit{\textbf{$k$-th moment  divergent beam transform}}\footnote{It is easy to see that $\Xc^0_{\vu} h=\Xc_{\vu} h$. While in this article we use primarily the latter notation, the former becomes handy in some recurrence relations discussed in Section \ref{sec:moments}.} of a function $h$ in the direction $\vu\in\Sb^1$ is defined as follows 
 \begin{equation}\label{def:moment DivBeam}
  \Xc^k_{\vu} h =  \int_0^\infty h(\vx + t \vu)\, t^k\,  dt.
 \end{equation}
\end{defn}
The directional derivative of a function in the direction $\vu$ is denoted by $D_{\vu}$, i.e. 
\begin{equation}
D_{\vu} h = \vu \cdot \nabla h.
\end{equation}
For a compactly supported function $h$, the operators  $\mathcal{X}_{\vu}$ and $D_{\vu}$ commute and satisfy the following relation:
\begin{equation}\label{eq:commut}
    \mathcal{X}_{\vu}(D_{\vu}h)(\vx)=D_{\vu} (\mathcal{X}_{\vu}h)(\vx)= -h(\vx).
\end{equation}
\begin{defn}
For  two vectors $\vu=(u_1, u_2)$ and $\vv = (v_1, v_2)$, the tensor product $\vu \otimes \vv$ is a rank-2 tensor with its $ij$-th component defined by
\begin{align}\label{eq:tensor product}
(\vu \otimes \vv)_{ij}   =  u_iv_j.
\end{align}
The symmetrized tensor product $\vu \odot \vv$
is then defined as
\begin{align}\label{eq:symmetrized tensor product}
\vu \odot \vv  =  \frac{1}{2}\left(\vu \otimes \vv + \vv \otimes \vu \right).
\end{align}
\end{defn}
\noindent We use the notation $\vu^2$ for the tensor (symmetrized) product of a vector $\vu$ with itself, that is, 
$$ \vu^2 = \vu \odot \vu = \vu \otimes \vu.$$
%%%%%%%%%%%%%%%%%%%%%%%%%%%%%%%%%%%%%%%%%%%%%%%%

In this paper, our main goal is to recover a symmetric 2-tensor field from its V-line transforms\footnote{The name V-line is due to the resemblance of the paths of integration to the letter V.}, which we define below. 

\begin{defn} \label{def:definition of longitudinal V line transform} Let $\vf\in C_c^2\left(S^2;\mathbb{R}^2\right)$.
The \textbf{longitudinal V-line transform} of $\textbf{\textit{f}}$ is defined as
\begin{align}\label{eq:def V-line transform}
\mathcal{L}_{\vu, \vv}\, \vf\  = \mathcal{X}_{\vu}  \left( \left\langle \vf , \vu^2 \right\rangle \right)  + \mathcal{X}_{\vv}  \left( \left\langle \vf , \vv^2 \right\rangle \right).
\end{align}
\end{defn}
To define the second integral transform of interest, we need to make a choice for the normal unit vectors corresponding to each branch of the V-line. We define the vector $\perp$ operation by $(x_1,x_2)^\perp = (-x_2,x_1) $.

%--------------------------------------------------

\begin{defn}\label{def:definition of transverse V line Doppler transform}
Let $\vf\in C_c^2\left(S^2;\mathbb{R}^2\right)$. 
The \textbf{transverse V-line transform} of $\textbf{\textit{f}}$ is defined as
\begin{align}\label{eq:def transverse V-line transform}
\mathcal{T}_{\vu, \vv}\, \vf\  = \mathcal{X}_{\vu}  \left( \left\langle \vf , (\vu^\perp)^2 \right\rangle \right)  + \mathcal{X}_{\vv}  \left( \left\langle \vf , (\vv^\perp)^2 \right\rangle \right).
\end{align}
\end{defn}

\begin{defn}\label{def:definition of mixed V line Doppler transform}
Let $\vf\in C_c^2\left(S^2;\mathbb{R}^2\right)$.
The \textbf{mixed V-line transform} of $\textbf{\textit{f}}$ is defined as
\begin{align}\label{eq:def mixed V-line transform}
\mathcal{M}_{\vu, \vv}\, \vf\  = \mathcal{X}_{\vu}  \left( \left\langle \vf , \vu \odot \vu^\perp \right\rangle \right)  + \mathcal{X}_{\vv}  \left( \left\langle \vf , \vv \odot \vv^\perp \right\rangle \right).
\end{align}
\end{defn}

%------------------------------------------------------
%-----------------------------------------------
\begin{defn} \label{def:definition of k-th V line moment  transform} 
Let $\vf\in C_c^2\left(S^2;\mathbb{R}^2\right)$.
The \textbf{$k$-th moment longitudinal V-line transform} of $\textbf{\textit{f}}$ is defined as
\begin{align}\label{eq:first moment longitudinal V-line transform}
\mathcal{L}^k_{\vu, \vv}\, \vf\  = \mathcal{X}^k_{\vu}  \left( \left\langle \vf , \vu^2 \right\rangle \right)  + \mathcal{X}^k_{\vv}  \left( \left\langle \vf , \vv^2 \right\rangle \right).
\end{align}
\end{defn}

\begin{defn} \label{def:definition of kth V line transverse moment  transform} 
Let $\vf\in C_c^2\left(S^2;\mathbb{R}^2\right)$.
The \textbf{$k$-th moment transverse V-line transform} of $\textbf{\textit{f}}$ is defined as
\begin{align}\label{eq:k-th moment transverse V line transform}
\mathcal{T}^k_{\vu, \vv}\, \vf\  = \mathcal{X}^k_{\vu}  \left( \left\langle \vf , (\vu^\perp)^2 \right\rangle \right)  + \mathcal{X}^k_{\vv}  \left( \left\langle \vf , (\vv^\perp)^2 \right\rangle \right).
\end{align}
\end{defn}

\begin{defn} \label{def:definition of kth V line mixed moment transform} 
Let $\vf\in C_c^2\left(S^2;\mathbb{R}^2\right)$.
The \textbf{$k$-th moment mixed V-line transform} of $\textbf{\textit{f}}$ is defined as
\begin{align}\label{eq:k-th moment mixed V line transform}
\mathcal{M}^k_{\vu, \vv}\, \vf\  = \mathcal{X}^k_{\vu}  \left( \left\langle \vf , \vu \odot \vu^\perp \right\rangle \right)  + \mathcal{X}^k_{\vv}  \left( \left\langle \vf ,\vv \odot \vv^\perp \right\rangle \right).
\end{align}
\end{defn}

%-----------------------------------------------
%-----------------------------------------------

\begin{rem}
Throughout the paper we assume that the linearly independent unit vectors $\vu$ and $\vv$ are fixed. Hence, to simplify the notations, we will drop the indices $\vu,\vv$ and refer to $\mathcal{T}_{\textbf{\textit{u}}, \textbf{\textit{v}}}$, $\mathcal{L}_{\textbf{\textit{u}},  \textbf{\textit{v}}}$, $\mathcal{M}_{\vu, \vv}$, 
$\mathcal{T}^k_{\textbf{\textit{u}}, \textbf{\textit{v}}}$, 
$\mathcal{L}^k_{\textbf{\textit{u}},  \textbf{\textit{v}}}$, and $\mathcal{M}^k_{\vu, \vv}$ 
simply as $\mathcal{T}$, $\mathcal{L}$, $\mathcal{M}$, $\mathcal{T}^k$, $\mathcal{L}^k$, and $\mathcal{M}^k$.
\end{rem}
%-----------------------------------------------

Let us assume that  $\operatorname{supp}\vf\subseteq D_1$, where $D_1$ is an open disc of radius $r_1$ centered at the origin. Then $\mathcal{T}\vf$, $\mathcal{L}\vf$, $\mathcal{M}\vf$, and their moments are supported inside an unbounded domain $D_2\cup S_{\vu} \cup S_{\vv}$, where $D_2$ is a disc of some finite radius $r_2>r_1$ centered at the origin, while $S_{\vu}$ and $S_{\vv}$ are semi-infinite strips (outside of $D_2$) in the direction of $\vu$ and $\vv$, respectively (see Figure \ref{fig1b}). It is easy to notice that $\Lc\vf, \Tc\vf$, and $\Mc \vf$ are constant along the rays in the directions $\vu$ and $\vv$ inside the corresponding strips $S_{\vu}$ and $S_{\vv}$. In other words, their restrictions to $\overline{D_2}$ completely define them in $\mathbb{R}^2$. If one also knows the corresponding moments in $\overline{D_2}$, then the values of the moments are uniquely determined everywhere else. Indeed, let $\vy$ be inside the strip $S_{\vu}$, and $\vx$ be a point on the boundary between $D_2$ and $S_{\vu}$ such that $\vy+a\vu=\vx$ for some constant $a\textgreater 0$. It is easy to see that 
\begin{equation}
    \Lc^1\vf(\vy)=\Lc^1\vf(\vx)+ a \Lc\vf(\vx).
\end{equation}
Similar result follows for transverse and mixed moment V-line transforms.

\begin{rem}
In this article we assume that  the tensor field $\vf$ is supported in $D_1$ and the transforms $\Lc\vf\,(\vx)$, $\Tc\vf\,(\vx)$, $\Mc\vf\,{(\vx)}$, and their moments are known for all $\vx\in \overline{D_2}$.
\end{rem}
\begin{rem}\label{rem:choice of u and v}
To simplify various calculations, we assume $\vu = (u_1, u_2)$ and $\vv =  (-u_1, u_2)$, that is, the V-lines are symmetric with respect to the $y$-axis. This choice does not change the analysis of the general case, since the data obtained in one setup of $\vu$ and $\vv$ can be transformed into the data obtained for the other setup and vice versa.
\end{rem}
\begin{rem}
    If $\vu=(0,1)=\vv$, the V-line degenerates into a pair of coinciding rays. If $\vu=(1,0)=-\vv$, the V-line becomes a straight line. Since we are interested in proper V-lines, we avoid both of these extreme cases. Therefore, we assume $-1<u_1^2-u_2^2<1$.
\end{rem}
%%%%%%%%%%%%%%%%%%%%%%%%%%%%%%%%%%%%%%%%%%%%%%%%%%%%%%%%%
With this choice of the parameters of the V-lines, we have the following identities (which can be verified by direct calculations):
\begin{align*}
\langle \vf, \vu^2 \rangle &=   u_1^2f_{11} + 2 u_1 u_2 f_{12}  + u_2^2f_{22},\\
\langle \vf, \vu_\perp^2 \rangle &=   u_2^2f_{11} - 2 u_1 u_2 f_{12}  +u_1^2f_{22},\\
\langle \vf, \vu\odot \vu_\perp \rangle &=  -u_1 u_2 f_{11} + (u_1^2 -u_2^2)f_{12} +u_1 u_2 f_{22}, \\
\langle \vf, \vv^2 \rangle &=  u_1^2f_{11} - 2 u_1 u_2 f_{12}  + u_2^2f_{22},\\
\langle \vf, \vv_\perp^2 \rangle &=   u_2^2f_{11} + 2 u_1 u_2 f_{12}  + u_1^2f_{22},\\
\langle \vf, \vv\odot \vv_\perp \rangle &=  u_1 u_2 f_{11} + (u_1^2 -u_2^2)f_{12} -u_1 u_2 f_{22}.
\end{align*}
Using these identities, we can rewrite our integral operators in the following simplified forms: 
\begin{align}
\Lc \vf (\vx) &= u_1^2 \Vc(f_{11})(\vx) + 2u_1u_2 \Vc^{-}(f_{12})(\vx) + u_2^2 \Vc(f_{22})(\vx),\label{eq: Lf}\\
\Tc \vf (\vx) &= u_2^2 \Vc(f_{11})(\vx) - 2u_1u_2 \Vc^{-}(f_{12})(\vx) + u_1^2 \Vc(f_{22})(\vx),\label{eq:Tf}\\
\Mc \vf (\vx) &= -u_1u_2 \Vc^{-}(f_{11})(\vx) + (u_1^2 -u^2_2) \Vc(f_{12})(\vx) + u_1u_2 \Vc^{-}(f_{22})(\vx),\label{eq:Mf}
\end{align}
where $\Vc \varphi$ and $\Vc^{-} \varphi$ denote  the V-line transform and the \textit{signed} V-line transform of scalar function $\varphi$. Namely, 
\begin{align}\label{def:V-line transform}
    \Vc \varphi(\vx) =  \Xc_{\vu} \varphi (\vx) + \Xc_{\vv}\varphi(\vx),
\end{align}
and
\begin{align}\label{def:Signed V-line transform}
    \Vc^{-} \varphi(\vx) =  \Xc_{\vu} \varphi (\vx) - \Xc_{\vv}\varphi(\vx).
\end{align}
\begin{theorem}\label{th:trace recovery}
Let $\vf\in C_c^2\left(S^2;D_1\right)$.
Then the knowledge of $\Lc \vf$ and $\Tc \vf$ determines the trace of $\vf$ as follows:
\begin{align}\label{eq:reconstruction of trace of f}
\left(f_{11} + f_{22}\right)(\vx) = \frac{1}{2 u_2} D_\vu D_\vv  \int_{0}^{\infty}\left(\Lc \vf + \Tc \vf \right)(\vx+ t \vev_2)\,dt,
\end{align}
where $\vev_2=(0,1)$.
\end{theorem}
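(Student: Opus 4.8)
The plan is to reduce the theorem to an inversion formula for the \emph{scalar} V-line transform $\Vc$ and then to prove that formula using only the commutation relation \eqref{eq:commut}.

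\textbf{Reduction to a scalar problem.} First I would add \eqref{eq: Lf} and \eqref{eq:Tf}. Since $\vu$ is a unit vector, $u_1^2+u_2^2=1$, so the mixed term $\Vc^{-}(f_{12})$ cancels and one is left with $(\Lc\vf+\Tc\vf)(\vx)=\Vc(f_{11}+f_{22})(\vx)$. Writing $\varphi:=f_{11}+f_{22}$, a compactly supported $C^2$ function with $\operatorname{supp}\varphi\subseteq D_1$, it then suffices to prove the scalar identity $\varphi(\vx)=\tfrac{1}{2u_2}\,D_{\vu}D_{\vv}\big(\Xc_{\vev_2}\Vc\varphi\big)(\vx)$ with $\vev_2=(0,1)$; since $\Xc_{\vev_2}h(\vx)=\int_0^\infty h(\vx+t\vev_2)\,dt$, this is precisely \eqref{eq:reconstruction of trace of f}.

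\textbf{Inverting the scalar V-line transform.} Set $g:=\Xc_{\vev_2}\Vc\varphi$. I would first record that, because $\varphi$ has compact support and $\vu,\vv$ have nonzero second component, for each fixed $\vx$ the functions $t\mapsto(\Vc\varphi)(\vx+t\vev_2)$ and $t\mapsto(\Vc^{-}\varphi)(\vx+t\vev_2)$ vanish for all large $t$ (here $\Vc^{-}$ is the signed V-line transform \eqref{def:Signed V-line transform}); consequently $g$ is a well-defined $C^2$ function and, by the fundamental theorem of calculus, $\partial_2(\Xc_{\vev_2}\psi)=-\psi$, $\Xc_{\vev_2}(\partial_2\psi)=-\psi$, and $\partial_1(\Xc_{\vev_2}\psi)=\Xc_{\vev_2}(\partial_1\psi)$ for $\psi\in\{\Vc\varphi,\Vc^{-}\varphi\}$, where $\partial_j:=\partial/\partial x_j$. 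Next I would apply \eqref{eq:commut} to $\varphi$ along $\vu$ and along $\vv$, and use $D_{\vu}=u_1\partial_1+u_2\partial_2$, $D_{\vv}=-u_1\partial_1+u_2\partial_2$, to obtain $u_1\partial_1(\Xc_{\vu}\varphi)=-\varphi-u_2\partial_2(\Xc_{\vu}\varphi)$ and $u_1\partial_1(\Xc_{\vv}\varphi)=\varphi+u_2\partial_2(\Xc_{\vv}\varphi)$; adding these gives the key identity $u_1\partial_1(\Vc\varphi)=-u_2\,\partial_2(\Vc^{-}\varphi)$. With this in hand I would compute
\[
D_{\vu}g=u_1\partial_1 g+u_2\partial_2 g=\Xc_{\vev_2}\big(u_1\partial_1\Vc\varphi\big)-u_2\,\Vc\varphi,
\]
and substitute the key identity together with $\Xc_{\vev_2}(\partial_2\Vc^{-}\varphi)=-\Vc^{-}\varphi$, which collapses the right-hand side to $D_{\vu}g=u_2\,\Vc^{-}\varphi-u_2\,\Vc\varphi=-2u_2\,\Xc_{\vv}\varphi$. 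Finally, since $\Xc_{\vv}\varphi$ is genuinely compactly supported, \eqref{eq:commut} along $\vv$ gives $D_{\vv}(\Xc_{\vv}\varphi)=-\varphi$, and, $D_{\vu}$ and $D_{\vv}$ being constant-coefficient operators that commute, $D_{\vu}D_{\vv}g=D_{\vv}(D_{\vu}g)=2u_2\,\varphi$. Dividing by $2u_2\ne0$ yields the claimed formula.

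\textbf{Expected main obstacle.} The single delicate point is that $\Vc\varphi$ and $\Vc^{-}\varphi$ are \emph{not} compactly supported --- they are constant along the ray directions inside the semi-infinite strips $S_{\vu},S_{\vv}$ --- so \eqref{eq:commut} does not apply to them verbatim. The fix, which I would make explicit, is that these transforms nonetheless vanish identically on $\{\vx+t\vev_2:t\ge T\}$ for $T$ sufficiently large; this is what makes $\Xc_{\vev_2}\Vc\varphi$ converge and what legitimizes the identities $\partial_2\Xc_{\vev_2}=\Xc_{\vev_2}\partial_2=-\mathrm{Id}$ applied to $\Vc\varphi$ and $\Vc^{-}\varphi$. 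Everything else is routine bookkeeping with directional derivatives.
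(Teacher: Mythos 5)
Your argument is correct, and its first step is exactly the paper's: adding \eqref{eq: Lf} and \eqref{eq:Tf} and using $u_1^2+u_2^2=1$ to get $\Lc\vf+\Tc\vf=\Vc(f_{11}+f_{22})$. Where you diverge is the second step: the paper simply invokes the known inversion formula for the scalar V-line transform (Corollary 2 of \cite{amb-lat_2019}), whereas you re-derive that inversion from scratch out of the commutation relation \eqref{eq:commut}. Your derivation checks out: the identity $u_1\partial_{x_1}(\Vc\varphi)=-u_2\partial_{x_2}(\Vc^{-}\varphi)$ follows correctly from $D_\vu\Xc_\vu\varphi=-\varphi$ and $D_\vv\Xc_\vv\varphi=-\varphi$; the computation $D_\vu\Xc_{\vev_2}\Vc\varphi=-2u_2\Xc_\vv\varphi$ and then $D_\vv D_\vu\Xc_{\vev_2}\Vc\varphi=2u_2\varphi$ is right, and the commutativity of $D_\vu$ and $D_\vv$ lets you match the order $D_\vu D_\vv$ in \eqref{eq:reconstruction of trace of f}. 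You also correctly identify and handle the only delicate point, namely that $\Vc\varphi$ and $\Vc^{-}\varphi$ are not compactly supported, but (with $u_2>0$, as the statement of the theorem implicitly requires for the $t$-integral to make sense) they vanish identically far enough along the $+\vev_2$ direction, which is all that is needed for $\partial_{x_2}\Xc_{\vev_2}=\Xc_{\vev_2}\partial_{x_2}=-\mathrm{Id}$ on these functions. The trade-off is transparency versus brevity: the paper's proof is two lines but rests on an external citation, while yours is self-contained and in effect reproves the scalar V-line inversion formula that the cited corollary provides.
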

\begin{proof}
By adding equations \eqref{eq: Lf} and \eqref{eq:Tf}, we get the following relation:
$$ \Lc \vf (\vx) + \Tc \vf (\vx) = \Vc \left(f_{11} + f_{22}\right)(\vx). $$
Using the inversion of the V-line transform given in \cite[Corollary 2]{amb-lat_2019}, we obtain the required inversion formula \eqref{eq:reconstruction of trace of f}.
\end{proof}

One can easily observe that, when the branches of the V-lines are orthogonal to each other (or, equivalently, when $u_1=u_2$), the symmetric tensor products $\vu^2$, $\vu_\perp^2$, $\vu\odot \vu_\perp$, $\vv^2$, $\vv_\perp^2$, and $\vv\odot \vv_\perp$ become degenerate. As it will be shown throughout the paper, this special case exhibits a qualitatively different behaviour in the problems of recovering $\vf$ from its integral transforms. For example, the results of Theorem \ref{th:trace recovery}, in that particular setup, can be augmented as follows.

%---------------------------------------

\begin{theorem}\label{th-special_case}
Let $\vf\in C_c^2\left(S^2;D_1\right)$ and $u_1=u_2$.
The knowledge of $\Lc \vf$ and $\Tc \vf$ determines the off-diagonal component $f_{12}$ of $\vf$ as follows:
\begin{equation}\label{eq:f12-special}
    f_{12}(\vx) = \frac{1}{4 u_1} D_\vu D_\vv  \int_{0}^{\infty}\left(\Lc \vf - \Tc \vf \right)(\vx- t \vev_1)\,dt,
\end{equation}
where $\vev_1=(1,0)$, while $\Mc\vf$ provides the difference of the diagonal components of $\vf$ through:
\begin{equation}\label{eq:f22-f11-special}
    \left(f_{22}-f_{11}\right)(\vx) = \frac{1}{u_1} D_\vu D_\vv  \int_{0}^{\infty}\Mc \vf\, (\vx- t \vev_1)\,dt.
\end{equation}
\end{theorem}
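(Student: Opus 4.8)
The plan is to reduce both identities to a single inversion formula for the \emph{signed} V-line transform $\Vc^{-}$ of a scalar function, in the same way Theorem~\ref{th:trace recovery} was derived from the inversion of $\Vc$ in \cite[Corollary~2]{amb-lat_2019}.

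The first step is to record the $\Vc^{-}$-analogue of that inversion: for every $\varphi\in C_c^2(\mathbb{R}^2)$,
\begin{equation}\label{eq:signed-V-inv-plan}
\varphi(\vx)=\frac{1}{2u_1}\,D_\vu D_\vv\int_0^\infty \Vc^{-}\varphi(\vx-t\vev_1)\,dt .
\end{equation}
This should follow from the commutation relation \eqref{eq:commut}. Writing $\Xc_{-\vev_1}\psi(\vx)=\int_0^\infty\psi(\vx-t\vev_1)\,dt$ and using that $D_\vu$ and $D_\vv$ commute with all divergent beam transforms of compactly supported functions, one gets $D_\vu D_\vv\,\Xc_{-\vev_1}\Xc_\vu\varphi=-D_\vv\Xc_{-\vev_1}\varphi$ and $D_\vu D_\vv\,\Xc_{-\vev_1}\Xc_\vv\varphi=-D_\vu\Xc_{-\vev_1}\varphi$ from $D_\vu\Xc_\vu\varphi=-\varphi=D_\vv\Xc_\vv\varphi$; subtracting, and using $\vu-\vv=2u_1\vev_1$ together with $D_{\vev_1}\Xc_{-\vev_1}\varphi=\varphi$ (again \eqref{eq:commut}), gives $D_\vu D_\vv\,\Xc_{-\vev_1}\Vc^{-}\varphi=2u_1\varphi$, which is \eqref{eq:signed-V-inv-plan}. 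If this formula is already available from an earlier part of the text or from \cite{Gaik_Mohammad_Rohit}, I would simply cite it.

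Next I would exploit the simplified forms \eqref{eq: Lf}--\eqref{eq:Mf}. Subtracting \eqref{eq:Tf} from \eqref{eq: Lf} gives $\Lc\vf-\Tc\vf=(u_1^2-u_2^2)\Vc(f_{11})+4u_1u_2\,\Vc^{-}(f_{12})+(u_2^2-u_1^2)\Vc(f_{22})$, which in the case $u_1=u_2$ collapses to $\Lc\vf-\Tc\vf=4u_1^2\,\Vc^{-}(f_{12})$; similarly \eqref{eq:Mf} reduces to $\Mc\vf=u_1^2\,\Vc^{-}(f_{22}-f_{11})$, since the coefficient $u_1^2-u_2^2$ of $\Vc(f_{12})$ vanishes. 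Substituting $\varphi=f_{12}$ and $\varphi=f_{22}-f_{11}$ into \eqref{eq:signed-V-inv-plan} then yields $f_{12}=\tfrac{1}{8u_1^3}\,D_\vu D_\vv\int_0^\infty(\Lc\vf-\Tc\vf)(\vx-t\vev_1)\,dt$ and $f_{22}-f_{11}=\tfrac{1}{2u_1^3}\,D_\vu D_\vv\int_0^\infty\Mc\vf(\vx-t\vev_1)\,dt$. Finally, since $\vu$ is a unit vector with $u_1=u_2$ we have $u_1^2=\tfrac12$, hence $8u_1^3=4u_1$ and $2u_1^3=u_1$, and the two displays become exactly \eqref{eq:f12-special} and \eqref{eq:f22-f11-special}.

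I expect the only genuine obstacle to be establishing \eqref{eq:signed-V-inv-plan} with the correct integration direction and constant: the minus sign in $\Vc^{-}$ forces integration along $-\vev_1$ rather than along the bisector direction $\vev_2$ used for $\Vc$, and one must use $D_\vu-D_\vv=2u_1D_{\vev_1}$ (rather than a multiple of $D_{\vev_2}$) to get the prefactor $2u_1$. After that the argument is routine bookkeeping; the one point to remember is to invoke the normalization $u_1^2+u_2^2=1$ so that the constants collapse to the stated form, and the convergence of the a priori unbounded integrals is handled exactly as for Theorem~\ref{th:trace recovery} and in \cite{amb-lat_2019}.
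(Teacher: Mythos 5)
Your proposal is correct and takes essentially the same route as the paper: subtract \eqref{eq:Tf} from \eqref{eq: Lf} and simplify \eqref{eq:Mf} under $u_1=u_2$ so that everything reduces to the signed transform $\Vc^{-}$ applied to $f_{12}$ and $f_{22}-f_{11}$, and then invert $\Vc^{-}$. The only difference is that you derive the inversion formula for $\Vc^{-}$ directly from the commutation relation \eqref{eq:commut} (with the correct constant, since $2u_1^2=1$ when $u_1=u_2$), whereas the paper simply cites the known inversion of the signed V-line transform from \cite[Theorem 7]{amb-lat_2019}.
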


\begin{proof}
By subtracting equation \eqref{eq:Tf} from equation \eqref{eq: Lf}, we get the following relation:
$$ 
\Lc \vf (\vx) - \Tc \vf (\vx) = 4 u_1^2\,\Vc ^{-}f_{12}(\vx). 
$$
Under the assumption $u_1=u_2$, equation (\ref{eq:Mf}) simplifies into the expression:
$$
\Mc \vf (\vx) = u_1^2\; \Vc^{-}(f_{22}-f_{11})(\vx).
$$
Using the inversion of the signed V-line transform given in \cite[Theorem 7]{amb-lat_2019}, we obtain both of the required inversion formulas \eqref{eq:f12-special} and \eqref{eq:f22-f11-special}.
\end{proof}

\begin{cor}\label{cor: u1 =u2 case}
If $u_1=u_2$, then $\vf\in C_c^2\left(S^2;D_1\right)$ can be recovered from $\Lc \vf$, $\Tc \vf$, and $\Mc \vf$.    
\end{cor}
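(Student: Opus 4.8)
The plan is to observe that this corollary is an immediate consequence of Theorems \ref{th:trace recovery} and \ref{th-special_case}, combined with a trivial linear-algebra step. Since a symmetric $2$-tensor field in the plane is completely determined by the three scalar functions $f_{11}$, $f_{12}$, and $f_{22}$, it suffices to recover each of these from the data $\Lc\vf$, $\Tc\vf$, and $\Mc\vf$.

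First I would apply Theorem \ref{th-special_case}, whose hypothesis $u_1=u_2$ is exactly the hypothesis of the corollary: formula \eqref{eq:f12-special} reconstructs $f_{12}$ from $\Lc\vf$ and $\Tc\vf$, and formula \eqref{eq:f22-f11-special} reconstructs the difference $f_{22}-f_{11}$ from $\Mc\vf$. Next I would apply Theorem \ref{th:trace recovery} (which holds for all admissible $\vu,\vv$, in particular when $u_1=u_2$): formula \eqref{eq:reconstruction of trace of f} reconstructs the trace $f_{11}+f_{22}$ from $\Lc\vf$ and $\Tc\vf$. Finally, having both $f_{11}+f_{22}$ and $f_{22}-f_{11}$ in hand, I would solve the resulting $2\times 2$ linear system,
\[
f_{11}=\tfrac12\big[(f_{11}+f_{22})-(f_{22}-f_{11})\big],\qquad f_{22}=\tfrac12\big[(f_{11}+f_{22})+(f_{22}-f_{11})\big],
\]
which, together with $f_{12}$ from the first step, recovers all three components of $\vf$.

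There is no real obstacle here: the substantive content is carried entirely by the two preceding theorems, and the only remaining work is inverting the elementary relation between $(f_{11}+f_{22},\,f_{22}-f_{11})$ and $(f_{11},\,f_{22})$. The sole point to verify is that the hypotheses of the two theorems are compatible, which is clear since Theorem \ref{th:trace recovery} imposes no constraint beyond the standing assumptions on $\vu,\vv$, while Theorem \ref{th-special_case} requires exactly $u_1=u_2$.
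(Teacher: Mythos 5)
Your proposal is correct and matches the paper's intended argument: the corollary is stated as an immediate consequence of Theorems \ref{th:trace recovery} and \ref{th-special_case}, with $f_{12}$ and $f_{22}-f_{11}$ coming from the latter, the trace $f_{11}+f_{22}$ from the former, and $f_{11}$, $f_{22}$ recovered by the elementary sum/difference step you describe. No gaps.
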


%%%%%%%%%%%%%%%%%%%%%%%%%%%%%%%%%%%%%%%%%%%%%%%%%

\section{Kernel descriptions of transforms $\Lc$, $\Tc$, and $\Mc$\label{sec:Kernels}}

This section is devoted to the study of the kernels of the VLTs defined on $C_c^2\left(S^2;\mathbb{R}^2\right)$. Each of these integral transforms has a non-trivial null space, which we characterize below. 
Our findings here are quite different from the known results for the usual straight-line transforms. For instance, the potential tensor fields are in the kernel of the longitudinal ray transform (along straight lines) \cite{Sharafutdinov_Book}, which is not the case in the V-line setup, as one can easily check. 
\begin{theorem}\label{th:kernel description}
Let $\vf\in C_c^2\left(S^2;D_1\right)$.
Then
\begin{enumerate}
    \item[(a)] \label{th:kernel of L} $\vf$ is in the kernel of $\Lc$ if and only if there exists a scalar function $\varphi$ such that 
\begin{align}\label{eq: kernel of L}
\D \varphi =  \left(u_1^2 f_{11} + u_2^2 f_{22}, 2 u_1^2 f_{12}\right).
\end{align}
\item[(b)] $\vf$ is in the kernel of $\Tc$ if and only if there exists a scalar function $\varphi$ such that 
\begin{align}\label{eq: kernel of T}
\D \varphi =  \left(u_2^2 f_{11} + u_1^2 f_{22}, -2 u_1^2 f_{12}\right).
\end{align}
\item[(c)] $\vf$ is in the kernel of $\Mc$ if and only if there exists a scalar function $\varphi$ such that 
\begin{align}\label{eq: kernel of M}
\D \varphi =  \left((u_1^2 - u_2^2) f_{12} , - u_1^2 (f_{11} - f_{22})\right).
\end{align}
\end{enumerate}
\end{theorem}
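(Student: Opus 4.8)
The plan is to reduce each of the three statements to the known kernel description of the scalar V-line transform $\Vc$ and signed V-line transform $\Vc^-$. Starting from the simplified forms \eqref{eq: Lf}, \eqref{eq:Tf}, \eqref{eq:Mf}, I observe that each VLT is a linear combination of $\Vc$ applied to one scalar component and $\Vc^-$ applied to another. The key algebraic manipulation is to recognize that $\Vc\varphi = \Xc_{\vu}\varphi + \Xc_{\vv}\varphi$ and $\Vc^-\psi = \Xc_{\vu}\psi - \Xc_{\vv}\psi$, so that, using the commutation relation \eqref{eq:commut}, a combination like $u_1^2\Vc(f_{11}) + \dots$ can be written as $\Xc_{\vu}$ of a single scalar plus $\Xc_{\vv}$ of a single scalar. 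Concretely, for part (a), collecting the $\Xc_{\vu}$ terms gives $\Xc_{\vu}(u_1^2 f_{11} + 2u_1u_2 f_{12} + u_2^2 f_{22})$ and the $\Xc_{\vv}$ terms give $\Xc_{\vv}(u_1^2 f_{11} - 2u_1u_2 f_{12} + u_2^2 f_{22})$ — but it is cleaner to note $\Lc\vf = \Xc_{\vu}(\langle\vf,\vu^2\rangle) + \Xc_{\vv}(\langle\vf,\vv^2\rangle)$ directly from Definition \ref{def:definition of longitudinal V line transform}.

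\textbf{Main steps.} First I would establish a scalar lemma (or invoke the analogous result from \cite{amb-lat_2019} or \cite{Gaik_Mohammad_Rohit}): $\Xc_{\vu}\alpha + \Xc_{\vv}\beta \equiv 0$ for compactly supported $\alpha,\beta$ if and only if there is a scalar potential whose directional derivatives along $\vu$ and $\vv$ recover $\alpha$ and $\beta$; equivalently, using $D_{\vu}\Xc_{\vu}\alpha = -\alpha$, the vanishing of the sum forces $\Xc_{\vu}\alpha = -\Xc_{\vv}\beta =: \varphi$, and then $\alpha = -D_{\vu}\varphi$, $\beta = D_{\vv}\varphi$, with $\varphi$ compactly supported. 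Second, I apply this with $\alpha = \langle\vf,\vu^2\rangle = u_1^2 f_{11} + 2u_1u_2 f_{12} + u_2^2 f_{22}$ and $\beta = \langle\vf,\vv^2\rangle = u_1^2 f_{11} - 2u_1u_2 f_{12} + u_2^2 f_{22}$. Third, I translate $\alpha = -D_{\vu}\varphi = -(u_1\PD_1\varphi + u_2\PD_2\varphi)$ and $\beta = D_{\vv}\varphi = -u_1\PD_1\varphi + u_2\PD_2\varphi$ into statements about $\D\varphi$: adding gives $\alpha + \beta = 2u_2\PD_2\varphi$, i.e. $u_1^2 f_{11} + u_2^2 f_{22} = u_2\PD_2\varphi$; subtracting gives $\beta - \alpha = 2u_1\PD_1\varphi$, i.e. $2u_1u_2 f_{12} = u_1 \PD_1\varphi$, hence $\PD_1\varphi = 2u_1 f_{12}$ and $\PD_2\varphi = u_2^{-1}(u_1^2 f_{11} + u_2^2 f_{22})$. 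After absorbing constants into $\varphi$ (replacing $\varphi$ by $u_2\varphi$), one lands exactly on \eqref{eq: kernel of L}. Parts (b) and (c) are identical in structure: for (b) use $\alpha = \langle\vf,\vu_\perp^2\rangle$, $\beta = \langle\vf,\vv_\perp^2\rangle$; for (c) use $\alpha = \langle\vf,\vu\odot\vu_\perp\rangle$, $\beta = \langle\vf,\vv\odot\vv_\perp\rangle$ and the same add/subtract trick. The converse direction in each case is a direct computation: given $\varphi$ with $\D\varphi$ as prescribed, one checks $\langle\vf,\vu^2\rangle = -D_{\vu}\varphi$, $\langle\vf,\vv^2\rangle = D_{\vv}\varphi$ (resp. the $\perp$ and mixed analogues), and then $\Lc\vf = -D_{\vu}\Xc_{\vu}\varphi - \dots$ hmm — more simply, $\Xc_{\vu}(-D_{\vu}\varphi) + \Xc_{\vv}(D_{\vv}\varphi) = \varphi - \varphi = 0$ by \eqref{eq:commut}.

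\textbf{Anticipated obstacle.} The main subtlety is bookkeeping: making sure the system of two scalar equations for $\PD_1\varphi$ and $\PD_2\varphi$ is genuinely the gradient of a single well-defined, compactly supported $\varphi$ — that is, that the candidate one-form $\PD_1\varphi\,dx_1 + \PD_2\varphi\,dx_2$ is exact — but this is automatic because $\varphi$ is constructed directly as $\varphi = \Xc_{\vu}\alpha$ (or the suitable rescaling), so its gradient is determined and consistent by construction; the compact support of $\varphi$ follows from the compact support of $\vf$ together with the fact that $\Xc_{\vu}\alpha = -\Xc_{\vv}\beta$ must vanish outside a bounded set (integrating from opposite sides gives the same function, which therefore has bounded support). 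A secondary point worth stating carefully is the nondegeneracy $u_2 \neq 0$ and $u_1 \neq 0$ used when dividing — these hold since $\vu$ is a unit vector with $-1 < u_1^2 - u_2^2 < 1$ and the V-line is proper, so neither component vanishes. I would present the scalar lemma once and then run parts (a), (b), (c) in parallel, spelling out only the differing linear-algebra of the add/subtract step in each.
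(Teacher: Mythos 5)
Your route is genuinely different from the paper's, and the underlying idea is viable. The paper never constructs a potential directly: it shows $\Lc\vf=0$ iff $D_\vu D_\vv\Lc\vf=0$, rewrites the latter as the vanishing of a curl (for $\Lc$) or a divergence (for $\Tc$, $\Mc$) of an explicit vector field built from the components of $\vf$, and then invokes the existence of a potential for curl-free (resp. solenoidal) fields. Your argument instead produces the potential explicitly as $\varphi:=\Xc_\vu\bigl(\langle\vf,\vu^2\rangle\bigr)=-\Xc_\vv\bigl(\langle\vf,\vv^2\rangle\bigr)$, which buys a little more than the theorem states (a compactly supported potential, by the two-strip support argument, which is correct), and your treatment of the converse via \eqref{eq:commut} is essentially the same cancellation the paper uses implicitly.

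However, the translation step contains a concrete algebraic error, and as written the proof does not land on \eqref{eq: kernel of L}. Since $\alpha=-D_\vu\varphi$ but $\beta=+D_\vv\varphi$, the directional derivatives enter with opposite signs, so \emph{adding} eliminates $\partial_{x_2}$, not $\partial_{x_1}$: $\alpha+\beta=(D_\vv-D_\vu)\varphi=-2u_1\partial_{x_1}\varphi$ and $\beta-\alpha=(D_\vu+D_\vv)\varphi=2u_2\partial_{x_2}\varphi$. The correct outcome for part (a) is therefore $\partial_{x_1}\varphi=-\tfrac{1}{u_1}\left(u_1^2f_{11}+u_2^2f_{22}\right)$ and $\partial_{x_2}\varphi=-2u_1f_{12}$, and the rescaling that matches \eqref{eq: kernel of L} is $\varphi\mapsto-u_1\varphi$, not $u_2\varphi$. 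Your displayed formulas ($\partial_{x_1}\varphi=2u_1f_{12}$, $\partial_{x_2}\varphi=u_2^{-1}(u_1^2f_{11}+u_2^2f_{22})$) have the two gradient components interchanged; the corresponding integrability condition is $u_1^2\partial_{x_1}f_{11}+u_2^2\partial_{x_1}f_{22}=2u_1u_2\partial_{x_2}f_{12}$, which is not equivalent to the correct condition $2u_1^2\partial_{x_1}f_{12}=u_1^2\partial_{x_2}f_{11}+u_2^2\partial_{x_2}f_{22}$ encoded in \eqref{eq: kernel of L}, so the argument as written proves a different (false) characterization; the same bookkeeping must be repaired in parts (b) and (c). One further point to state explicitly in the converse: \eqref{eq:commut} is stated for compactly supported functions, while the $\varphi$ in the theorem is arbitrary; since $\D\varphi$ is compactly supported, $\varphi$ is constant outside a large disc and the two boundary constants cancel in $\Xc_\vu(-D_\vu\varphi)+\Xc_\vv(D_\vv\varphi)$, so the conclusion stands, but this deserves a sentence.
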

\begin{proof}
\noindent \textbf{\textit{Part (a).}} As a first step, we observe that $\Lc \vf (\vx) =0 $ if and only if  $D_\vu D_\vv  \Lc \vf(\vx) =0$. 
This follows from formula (\ref{eq:commut}) after applying $\mathcal{X}_{\vv}\mathcal{X}_{\vu}$ to the second relation.
Now, consider
\begin{align*}
D_\vu D_\vv  \Lc \vf = 0 & \Longleftrightarrow   4 u_1^2 u_2 \partial_{x_{1}}  f_{12} -2 u_1^2 u_2 \partial_{x_{2}} f_{11} - 2u_2^3  \partial_{x_{2}}f_{22} =0 \\
& \Longleftrightarrow \operatorname{curl} \left( u_1^2  f_{11} + u_2^2 f_{22},   2 u_1^2  f_{12}  \right) =0.
 \end{align*}
It is known that $\operatorname{curl}(\vg)=\partial_{x_{1}}g_2-\partial_{x_{2}}g_1$ of a 2D vector field $\vg$ vanishes if and only if $ \vg =  \D \varphi$ for some scalar function $\varphi$. Therefore, there exists a scalar function $\varphi$ such that
$$\D \varphi = \left(  u_1^2  f_{11} + u_2^2 f_{22}, 2 u_1^2  f_{12}\right).$$

\noindent \textbf{\textit{Part (b).}} Proceeding as in the previous part, observe that $\Tc \vf (\vx) =0$ iff  $-D_\vu D_\vv  \Tc \vf(\vx) =0$.
\begin{align}
-D_\vu D_\vv  \Tc \vf = 0  & \Longleftrightarrow  4 u_1^2 u_2 \partial_{x_{1}}  f_{12} +2 u_2^3 \partial_{x_{2}} f_{11} + 2u_1^2u_2  \partial_{x_{2}}f_{22} =0\nonumber \\
& \Longleftrightarrow \operatorname{div}\left(  2 u_1^2  f_{12} , u_2^2  f_{11} + u_1^2 f_{22} \right) = 0 \tblue{.}
 \end{align}
 For every two-dimensional  solenoidal vector field  $\vg$  on the unit disk, there exists a  scalar function $\varphi$ such that $ \vg= \D^\perp \varphi.$ Hence, $\vf$ is in the kernel of $\Tc$ if and only if there exists a $\varphi$ such that 
$$ \D^\perp \varphi = \left( 2 u_1^2  f_{12},  u_2^2  f_{11} + u_1^2 f_{22} \right) \Longleftrightarrow \D \varphi = \left( u_2^2  f_{11} + u_1^2 f_{22},  - 2 u_1^2  f_{12} \right).$$

\noindent \textbf{\textit{Part (c).}}
\noindent  Again, note that $\Mc \vf (\vx) = 0 $ if and only if  $-D_\vu D_\vv  \Mc \vf(\vx) =0$. Then  
\begin{align*}
-D_\vu D_\vv  \Mc \vf = 0 & \Longleftrightarrow 2 u_1^2 u_2 \partial_{x_1}  f_{11} - 2 u_1^2 u_2 \partial_{x_1} f_{22} + 2u_2 (u_1^2 -u_2^2)\partial_{x_2} f_{12} =0\\
& \Longleftrightarrow \operatorname{div}\left(  u_1^2 \left( f_{11} -f_{22}\right), \left( u_1^2 - u_2^2\right) f_{12} \right) =0. 
 \end{align*}
Therefore, $\vf$ is in the kernel of $\Mc$ if and only if there exists a scalar function $\varphi$ 
such that 
$$  
\D^\perp \varphi = \left( u_1^2 \left( f_{11} -f_{22}\right), \left( u_1^2 - u_2^2\right) f_{12} \right) \Longleftrightarrow \D \varphi = \left(\left( u_1^2 - u_2^2\right) f_{12},  - u_1^2 \left( f_{11} -f_{22}\right) \right),
$$
which finishes the proof.
\end{proof}
%%%%%%%%%%%%%%%%%%%%%%%%%%%%%%%%%%%%%%%%%%%%%%%%%%%%%%%%
\section{Reconstruction results for special kinds of tensor fields}\label{sec:Special}
In this section we focus on symmetric 2-tensor fields of the form $\D \vg$ or $\D^\perp \vg $, where $\vg$ is a vector field in $\mathbb{R}^2$. We show that such tensor fields can be recovered from the knowledge of specific combinations of their VLTs. Furthermore, if the unknown tensor field is of the form $\D^2 \varphi$, $\D\D^\perp \varphi$ or $(\D^\perp)^2\varphi$, where $\varphi$ is a scalar function, then the field can be uniquely reconstructed just from one integral transform, depending on the form of $\vf$.  
\begin{theorem}\label{th:special tensor recovery 1} Let $\vg = (g_1, g_2)$ be a vector field with components $g_i(\vx) \in C_c^2(D_1)$, for $i =  1, 2$.
\begin{itemize}
\item[(a)] If $\vf$ is a symmetric 2-tensor field of the form $\vf = \D \vg$, then it can be recovered explicitly in terms of $\Lc \vf$ and $\Mc \vf$.
\item[(b)] If $\vf$ is a symmetric 2-tensor field of the form $\vf = \D^\perp \vg$, then it can be recovered explicitly in terms of $\Tc \vf$ and $\Mc \vf$. 
\end{itemize}
\end{theorem}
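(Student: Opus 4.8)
The plan is to use the commutation relation \eqref{eq:commut} to show that for a potential-type tensor $\vf=\D\vg$ the longitudinal VLT collapses to a pointwise multiple of one component of $\vg$, and then to recover the other component from the mixed VLT after applying $D_\vu D_\vv$; part (b) is the mirror image, with $\Tc$ playing the role that $\Lc$ plays in part (a). Since the only compactly supported vector field with $\D\vg=0$ (respectively $\D^\perp\vg=0$) is $\vg=0$, reconstructing $\vg$ is equivalent to reconstructing $\vf$, so it suffices to recover $\vg$.

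\emph{Part (a).} A direct computation from \eqref{eq:symm-der} gives $\langle \D\vg,\vu^2\rangle = D_\vu(\vu\cdot\vg)$ and $\langle\D\vg,\vv^2\rangle = D_\vv(\vv\cdot\vg)$. Applying $\Xc_\vu$ and $\Xc_\vv$ and using \eqref{eq:commut},
\[
\Lc(\D\vg)= -\vu\cdot\vg-\vv\cdot\vg = -(\vu+\vv)\cdot\vg = -2u_2\,g_2 ,
\]
since $\vu+\vv=(0,2u_2)$ in the normalization of Remark \ref{rem:choice of u and v}; as $u_2\ne0$, this recovers $g_2$ explicitly. Next I would apply $D_\vu D_\vv$ to $\Mc\vf$. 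The identity established in the proof of Theorem \ref{th:kernel description}(c), namely $D_\vu D_\vv\Mc\vf = -2u_2\big[u_1^2\,\partial_{x_1}(f_{11}-f_{22})+(u_1^2-u_2^2)\,\partial_{x_2}f_{12}\big]$, together with $f_{11}=\partial_{x_1}g_1$, $f_{22}=\partial_{x_2}g_2$, $f_{12}=\tfrac12(\partial_{x_2}g_1+\partial_{x_1}g_2)$ and $u_1^2+u_2^2=1$, yields after a short calculation
\[
D_\vu D_\vv\Mc(\D\vg) = -u_2\,Pg_1 + u_2\,\partial_{x_1}\partial_{x_2}g_2 , \qquad P:=2u_1^2\,\partial_{x_1}^2+(u_1^2-u_2^2)\,\partial_{x_2}^2 .
\]
Since $g_2$ is already known, this is an equation $Pg_1=\Phi$ whose right-hand side $\Phi=\partial_{x_1}\partial_{x_2}g_2-u_2^{-1}D_\vu D_\vv\Mc\vf$ is an explicit combination of $\Lc\vf$ and $\Mc\vf$. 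Recovering $g_1$ from $Pg_1=\Phi$ (see below) and setting $\vf=\D\vg$ completes part (a).

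\emph{Part (b).} A direct computation from \eqref{eq:symm-der} gives $\langle\D^\perp\vg,(\vu^\perp)^2\rangle=D_\vu(\vu^\perp\cdot\vg)$ and $\langle\D^\perp\vg,(\vv^\perp)^2\rangle=D_\vv(\vv^\perp\cdot\vg)$, so by \eqref{eq:commut}, $\Tc(\D^\perp\vg)=-(\vu^\perp+\vv^\perp)\cdot\vg=2u_2\,g_1$, which recovers $g_1$. Applying $D_\vu D_\vv$ to $\Mc\vf$ with $f_{11}=-\partial_{x_2}g_1$, $f_{22}=\partial_{x_1}g_2$, $f_{12}=\tfrac12(\partial_{x_1}g_1-\partial_{x_2}g_2)$ and simplifying as above gives $D_\vu D_\vv\Mc(\D^\perp\vg)=u_2\big(\partial_{x_1}\partial_{x_2}g_1+Pg_2\big)$, i.e.\ $Pg_2=\widetilde\Phi$ with $\widetilde\Phi$ an explicit combination of $\Tc\vf$ and $\Mc\vf$; solving for $g_2$ and setting $\vf=\D^\perp\vg$ finishes the proof.

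The step I expect to be the main obstacle is inverting $P=2u_1^2\partial_{x_1}^2+(u_1^2-u_2^2)\partial_{x_2}^2$ on a compactly supported function. This operator is elliptic only when $u_1^2>u_2^2$; for $u_1=u_2$ it degenerates to $2u_1^2\partial_{x_1}^2$, and for $u_1<u_2$ it is of hyperbolic type, so one cannot simply invoke elliptic regularity. I would instead argue on the Fourier side: since $u_1\ne0$ the symbol $2u_1^2\xi_1^2+(u_1^2-u_2^2)\xi_2^2$ is not identically zero, hence its zero set has measure zero, while $\widehat{g_1}$ is continuous (indeed real-analytic, by the Paley--Wiener theorem); therefore $\widehat{g_1}=-\widehat{\Phi}/\big(2u_1^2\xi_1^2+(u_1^2-u_2^2)\xi_2^2\big)$ is determined off that set, and then everywhere by continuity, giving the claimed closed-form reconstruction. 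When $u_1^2>u_2^2$ the division is by a nowhere-vanishing symbol and the formula is completely transparent.
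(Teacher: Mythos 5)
Your proposal is correct, and its first half coincides with the paper's argument: you recover $g_2=-\tfrac{1}{2u_2}\Lc\vf$ (resp.\ $g_1=\tfrac{1}{2u_2}\Tc\vf$) pointwise, and you reduce the remaining component to the same second-order equation $Pg=\Phi$ with $P=2u_1^2\partial_{x_1}^2+(u_1^2-u_2^2)\partial_{x_2}^2$ and data-determined right-hand side; I checked your identities $D_\vu D_\vv\Mc(\D\vg)=-u_2Pg_1+u_2\partial_{x_1}\partial_{x_2}g_2$ and $D_\vu D_\vv\Mc(\D^\perp\vg)=u_2\left(\partial_{x_1}\partial_{x_2}g_1+Pg_2\right)$ and they agree with the paper's equivalent formulation, which instead writes $\Mc\vf=\pm2u_2 g_{1,2}-\tfrac12\Vc(\delta^\perp\vg)$ (resp.\ with $\delta\vg$) and then applies $D_\vu D_\vv$. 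Where you genuinely diverge is the final step of inverting $P$. The paper splits into three cases according to the sign of $u_1^2-u_2^2$: a degenerate case $u_1=u_2$ solved by integrating twice in $x_1$, an elliptic boundary value problem with homogeneous Dirichlet data when $u_1^2>u_2^2$, and a hyperbolic problem with side conditions chosen from the compact support when $u_1^2<u_2^2$. You instead invert $P$ uniformly on the Fourier side: since $g_1\in C_c^2$, its transform is entire (Paley--Wiener), the symbol $2u_1^2\xi_1^2+(u_1^2-u_2^2)\xi_2^2$ vanishes only on a nowhere dense null set of lines, so $\widehat{g_1}=-\widehat{\Phi}/p$ off that set and is determined everywhere by continuity. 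This is a legitimate and arguably cleaner route: it treats all three regimes (including $u_1=u_2$ and the hyperbolic range) in one stroke and sidesteps the well-posedness discussion that the paper handles somewhat informally in the hyperbolic case, at the price of being a uniqueness-plus-division argument rather than a classical PDE solution formula. Two cosmetic remarks: the opening claim that $\D\vg=0$ forces $\vg=0$ for compactly supported fields is true but unnecessary (you only need that $\vf=\D\vg$ is computable once $\vg$ is), and your sign conventions in $\widehat{g_1}=-\widehat{\Phi}/p$ presuppose the standard convention $\widehat{\partial_{x_j}h}=i\xi_j\widehat{h}$, which is fine but worth stating.
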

\begin{proof} \textbf{\textit{Part (a).}}
From the definition of the symmetrized derivative \eqref{eq:symm-der}, we have 
     $$  f_{ij} = (\D \vg)_{ij} = \frac{1}{2}\left(\frac{\partial g_i}{\partial x_j} +\frac{\partial g_j}{\partial x_i}\right).$$ 
 A direct computation yields 
   \begin{align}\label{eq: L on df}
      \int_{0}^{\infty} f_{ij}(\vx+t \vu) u^{i} u^{j}\,dt 
 = \frac{1}{2} \int_{0}^{\infty} \left(\frac{\partial g_i}{\partial x_j} +\frac{\partial g_j}{\partial x_i}\right) (\vx+t \vu) u^{i} u^{j}\,dt 
   = -\left\langle \vg(\vx) , \vu \right\rangle ,
    \end{align}
   \begin{align} \label{eq: M on df} 
 \int_{0}^{\infty} f_{ij}(\vx+t \vu) u^{i} (u^\perp)^{j}\,dt 
 &= \frac{1}{2} \int_{0}^{\infty} \left(\frac{\partial g_i}{\partial x_j} +\frac{\partial g_j}{\partial x_i}\right) (\vx+t \vu) u^{i} (u^\perp)^{j}\,dt \nonumber \\
    &= -\left\langle \vg(\vx) , \vu^\perp \right\rangle - \frac{1}{2} \Xc_\vu (\delta^\perp \vg)(\vx).
    \end{align}
Combining formulas \eqref{eq: L on df} and \eqref{eq: M on df} we get the following expressions for $\Lc \vf (\vx)$, and $ \Mc \vf (\vx)$:
\begin{align}\label{eq: L and M on df}
    \Lc \vf (\vx) = - \left\langle \vg(\vx) , \vu + \vv \right\rangle \quad \mbox{ and } \quad 
    \Mc \vf (\vx) = - \left\langle \vg(\vx) , \left(\vu + \vv\right)^\perp \right\rangle - \frac{1}{2} \Vc \left( \delta^\perp \vg\right)(\vx).
\end{align}
With our choice of $\vu = (u_1,u_2)$ and $\vv = (-u_1,u_2)$ (see Remark \ref{rem:choice of u and v}) , we have $\vu + \vv = (0,2u_2)$ and $\left(\vu + \vv \right)^\perp = (-2u_2, 0)$. Using these relations, we get
$$ g_2(\vx) =  - \frac{1}{2u_2}\Lc \vf(\vx)\quad  \mbox{ and }\quad \Mc \vf (\vx) =  2 u_2 g_1(\vx) - \frac{1}{2}\Vc \left(\frac{\partial g_2}{\partial x_1} - \frac{\partial g_1}{\partial x_2}\right) (\vx).  
$$
Thus $g_2$ is known from $\Lc \vf$, and the aim now is to recover $g_1$ using $g_2$ and $\Mc \vf$. 
Consider
\begin{align*}
\Vc \left(\frac{\partial g_1}{\partial x_2}\right) (\vx)  + 4 u_2 g_1(\vx) &=   \underbrace{2\Mc \vf (\vx) + \Vc \left(\frac{\partial g_2}{\partial x_1}\right) (\vx)}_{h(\vx) \ \mbox{known data}}\\
 \Rightarrow \quad \ \ 
D_\vu D_\vv(\Xc_\vu + \Xc_\vv)  \left(\frac{\partial g_1}{\partial x_2}\right) (\vx)  + 4 u_2  D_\vu D_\vv g_1(\vx) &=   D_\vu D_\vv h (\vx)\\
  \Rightarrow \qquad \quad  \  -(D_\vv + D_\vu) \left(\frac{\partial g_1}{\partial x_2}\right) (\vx)  + 4 u_2  D_\vu D_\vv g_1(\vx) &=   D_\vu D_\vv h (\vx)\\
\Rightarrow \quad \ \  \  -2 u_2 \frac{\partial^2 g_1}{\partial x_2^2}(\vx)  + 4 u_2  \left(u_2^2 \frac{\partial^2}{\partial x_2^2}  -  u_1^2 \frac{\partial^2}{\partial x_1^2}\right) g_1(\vx) &=   D_\vu D_\vv h (\vx)\\
 \Rightarrow \qquad \quad  \qquad\qquad \  \  2 u_1^2 \frac{\partial^2 g_1}{\partial x_1^2}(\vx)  + (1 - 2u_2^2)  \frac{\partial^2 g_1}{\partial x_2^2}(\vx)  &=   - \frac{1}{2u_2}D_\vu D_\vv h (\vx)\\
  \Rightarrow \qquad \qquad \quad  \qquad \  \  2 u_1^2 \frac{\partial^2 g_1}{\partial x_1^2}(\vx)  + (u_1^2 - u_2^2)  \frac{\partial^2 g_1}{\partial x_2^2}(\vx)  &=   - \frac{1}{2u_2}D_\vu D_\vv h (\vx). 
\end{align*}
Therefore $g_1$ satisfies a second order partial differential equation with additional homogeneous boundary (or initial) conditions, which can be uniquely solved to get $g_1$. The homogeneous boundary (or initial) conditions arise naturally from the compactness of the support of $\vg$ required in the hypothesis of the theorem. More  explicitly, we have the following three cases:
\begin{itemize}
    \item When $u_1^2 =  u_2^2$, the above partial differential equation becomes:
    $$ 2 u_1^2 \frac{\partial^2 g_1}{\partial x_1^2}(\vx)   =   - \frac{1}{2u_2}D_\vu D_\vv h (\vx), $$
    which gives $g_1$ by integrating twice along $\vev_1$-direction. 
\item When $u_1^2 >  u_2^2$, we have an elliptic  PDE
 \begin{align*}
        2u_1^2\frac{\partial^2 g_1}{\partial x_1^2} + \left(u_1^2 - u_2^2\right) \frac{\partial^2 g_1}{\partial x_2^2} = - \frac{1}{2u_2}D_\vu D_\vv h (\vx)
 \end{align*}
 with homogeneous boundary conditions, which has a unique solution.
 \item When $u_1^2 <  u_2^2$, we have a hyperbolic PDE  \begin{align*}
        2u_1^2\frac{\partial^2 g_1}{\partial x_1^2} - \left(u_2^2 - u_1^2\right) \frac{\partial^2 g_1}{\partial x_2^2} &= - \frac{1}{2u_2}D_\vu D_\vv h (\vx), 
 \end{align*}
 which can be solved by choosing appropriate initial conditions on $g_1$ (for instance, we can take $g_1(a, y) = 0$ and $\displaystyle \frac{\partial g_1}{\partial x_1} (a, y) = 0$, for $y\in \mathbb{R}$ and any fixed $a \in \mathbb{R}\setminus [-1, 1]$).   
\end{itemize}
\vspace{1mm}

\noindent\textbf{\textit{Part (b).}} Recall 
       $$ f_{ij} = (\D^ \perp \vg)_{ij} =  \frac{1}{2}\left((-1)^j\frac{\partial g_i}{\partial x_{3-j}} + (-1)^i \frac{\partial g_j}{\partial x_{3-i}}\right).$$ 
Again, by a direct computation, we have 
      \begin{align} \label{eq: T on  dperpf}
       \int_{0}^{\infty} f_{ij}(\vx+t \vu) (u^ \perp)^{i} (u^\perp)^{j}\,dt 
 &= \frac{1}{2} \int_{0}^{\infty} \left((-1)^j\frac{\partial g_i}{\partial x_{3-j}} + (-1)^i \frac{\partial g_j}{\partial x_{3-i}}\right) (\vx+t \vu) (u^\perp)^{i} (u^ \perp)^{j}\,dt \nonumber\\
   & = -\left\langle \vg(\vx) , \vu ^ \perp \right\rangle ,
    \end{align}
    \begin{align} \label{eq: M on  dperpf}
     \int_{0}^{\infty} f_{ij}(\vx+t \vu) u^{i} (u^\perp)^{j}\,dt 
 &= \frac{1}{2} \int_{0}^{\infty} \left((-1)^j\frac{\partial g_i}{\partial x_{3-j}} + (-1)^i \frac{\partial g_j}{\partial x_{3-i}}\right) (\vx+t \vu) u^{i} (u^ \perp)^{j}\,dt \nonumber\\
    &= -\left\langle \vg(\vx) , \vu  \right\rangle - \frac{1}{2}\Xc_\vu \left(\delta \vg\right) (\vx).  \end{align}
As in the previous part, we combine formulas \eqref{eq: T on  dperpf}  and \eqref{eq: M on  dperpf} to get the following expressions:
\begin{align}\label{eq: T and M on dperpf}
    \Tc \vf (\vx) = - \left\langle \vg(\vx), \left(\vu + \vv\right)^\perp \right\rangle \quad \mbox{ and }\quad 
    \Mc \vf (\vx) = - \left\langle \vg(\vx) , \vu + \vv \right\rangle - \frac{1}{2} \Vc \left( \delta \vg\right)(\vx).  
\end{align}
Finally, we use the identities $\left(\vu + \vv\right) =  \left(0, 2 u_2\right)$ and $\left(\vu + \vv\right)^\perp =  \left(-2u_2, 0\right)$ together with the above relations to get 
{$$ g_1(\vx) = \frac{1}{2u_2}\Tc \vf(\vx) 
\quad \mbox{ and }\quad  \Mc \vf (\vx) = - 2 u_2 g_2(\vx) - \frac{1}{2}\Vc \left(\frac{\partial g_1}{\partial x_1} + \frac{\partial g_2}{\partial x_2}\right) (\vx). $$
The first relation reveals $g_1(\vx)$ in terms of $\Tc \vf(\vx)$. Following the ideas used in Part (a), we recover $g_2$ by solving the following  second order partial differential equation (equipped with appropriate boundary or initial conditions):
\begin{align*}
    2 u_1^2 \frac{\partial^2 g_2}{\partial x_1^2}(\vx)  + (u_1^2 - u_2^2)  \frac{\partial^2 g_2}{\partial x_2^2}(\vx)  =   - \frac{1}{2u_2}D_\vu D_\vv \Tilde{h} (\vx),
\end{align*}
where $\displaystyle \Tilde{h}(\vx) = -2\Mc \vf(\vx) - \Vc\left(\frac{\partial g_1}{\partial x_1}\right)(\vx)$.}
\end{proof}

%%%%%%%%%%%%%%%%%%%%%%%%%%%%%%%%%%%%%%%%%%%%%%%%%%%%%%%%%%%%%%%%%%%%%%
\begin{theorem} Let $\varphi$ be a twice differentiable compactly supported function, that is, $\varphi \in C^2_c(D_1)$. 
\begin{itemize}
\item[(a)] If  $\vf$ is a symmetric 2-tensor field of the form $\vf = \D^2 \varphi$, then it can be reconstructed explicitly in terms of $\Lc \vf$ or $\Mc \vf$,
using the following formulas:  
$$ \varphi(\vx)= \frac{1}{2u_2}  \int_0^\infty \Lc \vf(\vx + s\vev_2)ds=- \frac{1}{2u_2}  \int_0^\infty \Mc \vf(\vx + s\vev_1)ds .
$$
\item[(b)]  If  $\vf$ is a symmetric 2-tensor field of the form $\vf = \left(\D^\perp\right)^2 \varphi$, then it can be reconstructed explicitly in terms of $\Tc \vf$ or $\Mc \vf$,
using the following formulas:  
$$ \varphi(\vx)= \frac{1}{2u_2}  \int_0^\infty \Tc \vf(\vx + s\vev_2)ds =  \frac{1}{2u_2}  \int_0^\infty \Mc \vf(\vx + s\vev_1)ds.  
$$
\item[(c)]  If  $\vf$ is a symmetric 2-tensor field of the form $\vf = \D\D^\perp \varphi$, then it can be reconstructed explicitly in terms of $\Lc \vf$ or $\Tc \vf$, using the following formulas: 
 \begin{align*}
  \varphi(\vx)= \frac{1}{2u_2}  \int_0^\infty \Lc \vf(\vx + s\vev_1)ds = -\frac{1}{2u_2}  \int_0^\infty \Tc \vf(\vx + s\vev_1)ds.
 \end{align*}
In this case, $\vf$ can also be reconstructed from $ \Mc\vf$ by solving the following second order partial differential equation for $\varphi$ (with appropriate initial/boundary conditions):
 \begin{align*}
        \left(1+2u_1^2\right)\frac{\partial^2 \varphi}{\partial x_1^2} + \left(u_1^2 - u_2^2\right) \frac{\partial^2 \varphi}{\partial x_2^2} &= -\frac{1}{u_2}\Xc_{\vev_2}\left(D_\vu D_\vv \Mc \vf \right)(\vx).
 \end{align*}
\end{itemize}
\end{theorem}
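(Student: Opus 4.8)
The plan is to treat each of the three cases by the same strategy used in Theorems \ref{th:special tensor recovery 1} and the trace-recovery results: express the relevant VLT of the special tensor field in terms of directional derivatives of $\varphi$, and then invert the resulting (divergent-beam or V-line) relation. Concretely, for part (a) I would start from $f_{ij} = (\D^2\varphi)_{ij} = \partial_i\partial_j\varphi$ and compute $\langle \D^2\varphi, \vu^2\rangle = (D_\vu^2\varphi)$, so that $\Xc_\vu\langle \D^2\varphi,\vu^2\rangle = \Xc_\vu(D_\vu D_\vu \varphi) = -D_\vu\varphi$ by the commutation relation \eqref{eq:commut}. Adding the two branches gives $\Lc\vf = -(D_\vu + D_\vv)\varphi = -2u_2\,\partial_{x_2}\varphi$ using $\vu+\vv = (0,2u_2)$. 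Integrating along $\vev_2 = (0,1)$ (equivalently applying $\Xc_{\vev_2}$) and using compact support yields $\varphi(\vx) = \frac{1}{2u_2}\int_0^\infty \Lc\vf(\vx + s\vev_2)\,ds$. For the $\Mc$ formula, the same computation with $\langle\D^2\varphi,\vu\odot\vu^\perp\rangle = D_\vu D_{\vu^\perp}\varphi$ gives, after applying $\Xc_\vu$, the term $-D_{\vu^\perp}\varphi$; summing over both branches and using $\vu^\perp + \vv^\perp = (\vu+\vv)^\perp = (-2u_2,0)$ produces $\Mc\vf = -\big({-2u_2}\,\partial_{x_1}\varphi\big) = 2u_2\,\partial_{x_1}\varphi$, hence $\varphi(\vx) = -\frac{1}{2u_2}\int_0^\infty \Mc\vf(\vx+s\vev_1)\,ds$.

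Part (b) is entirely parallel: with $f_{ij} = ((\D^\perp)^2\varphi)_{ij}$ one checks $\langle(\D^\perp)^2\varphi,(\vu^\perp)^2\rangle = D_{\vu^\perp}^2\varphi$ (this is just the statement that $\D^\perp$ is the conjugate of $\D$ by the $90^\circ$ rotation, so $(\D^\perp)^2\varphi$ paired with $(\vu^\perp)^2$ equals $\D^2\varphi$ paired with $\vu^2$ after relabelling), and $\langle(\D^\perp)^2\varphi,\vu\odot\vu^\perp\rangle = -D_\vu D_{\vu^\perp}\varphi$. Running the same $\Xc_\vu$-and-sum argument gives $\Tc\vf = -2u_2\,\partial_{x_2}\varphi$ and $\Mc\vf = -2u_2\,\partial_{x_1}\varphi$, from which the two stated formulas follow by integrating along $\vev_2$ and $\vev_1$ respectively. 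Part (c), first formula, is again the same: for $\vf = \D\D^\perp\varphi$ one has $\langle\vf,\vu^2\rangle = D_\vu D_{\vu^\perp}\varphi$ and $\langle\vf,(\vu^\perp)^2\rangle = -D_{\vu^\perp}D_\vu\varphi = -D_\vu D_{\vu^\perp}\varphi$, so $\Lc\vf = -(D_{\vu^\perp}+D_{\vv^\perp})\varphi = 2u_2\,\partial_{x_1}\varphi$ and $\Tc\vf = -2u_2\,\partial_{x_1}\varphi$, giving $\varphi(\vx) = \frac{1}{2u_2}\int_0^\infty \Lc\vf(\vx+s\vev_1)\,ds = -\frac{1}{2u_2}\int_0^\infty \Tc\vf(\vx+s\vev_1)\,ds$.

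The one genuinely different piece is the last claim in part (c): reconstructing $\vf = \D\D^\perp\varphi$ from $\Mc\vf$ alone. Here $\langle\D\D^\perp\varphi,\vu\odot\vu^\perp\rangle$ does \emph{not} reduce to a pure second directional derivative along $\vu$, so applying $\Xc_\vu$ does not collapse to a boundary term; instead one is left with a genuine V-line transform of a second-order differential expression in $\varphi$. The plan is to compute $\langle\D\D^\perp\varphi,\vu\odot\vu^\perp\rangle$ explicitly in coordinates, split it as $D_\vu(\text{something})$ plus a leftover, apply $\Xc_\vu$ to kill the exact part via \eqref{eq:commut}, sum the two branches, and then apply $D_\vu D_\vv$ (which, as in Theorem \ref{th:kernel description}, turns $\Vc$ into $-(D_\vu+D_\vv) = 2u_2\partial_{x_2}$ composed with a pointwise identity) to remove the remaining V-line transform. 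Carrying this through and using $D_\vu D_\vv = u_2^2\partial_{x_2}^2 - u_1^2\partial_{x_1}^2$ should produce exactly the stated PDE $(1+2u_1^2)\varphi_{x_1x_1} + (u_1^2-u_2^2)\varphi_{x_2x_2} = -\frac{1}{u_2}\Xc_{\vev_2}(D_\vu D_\vv\Mc\vf)$, which is then solved with the homogeneous initial/boundary conditions coming from compact support (elliptic when $u_1^2>u_2^2$, parabolic-degenerate when $u_1^2=u_2^2$, hyperbolic when $u_1^2<u_2^2$), exactly as in Theorem \ref{th:special tensor recovery 1}.

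The main obstacle is purely bookkeeping: getting the coefficients and signs right in the coordinate expansion of $\langle\D\D^\perp\varphi,\vu\odot\vu^\perp\rangle$ and in the subsequent application of $\Xc_\vu$ and $D_\vu D_\vv$, so that the "$1+2u_1^2$" coefficient emerges correctly (the extra $1$ over the analogous $2u_1^2$ in Theorem \ref{th:special tensor recovery 1} comes from the term that survives the $\Xc_\vu$-cancellation because $\D\D^\perp\varphi$ mixes the two directions). Everything else is a direct application of the commutation identity \eqref{eq:commut} and the same well-posedness discussion already given.
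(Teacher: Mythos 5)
Your overall strategy is the same as the paper's: express the relevant pairings $\langle\vf,\vu^2\rangle$, $\langle\vf,(\vu^\perp)^2\rangle$, $\langle\vf,\vu\odot\vu^\perp\rangle$ of the special tensor fields as directional derivatives of $\varphi$, collapse each divergent beam integral via \eqref{eq:commut}, and then integrate along $\vev_1$ or $\vev_2$ (the paper gets the same identities by specializing the relations it has already proved for $\D\vg$ and $\D^\perp\vg$ in Theorem \ref{th:special tensor recovery 1}, rather than recomputing). Parts (a) and (b) of your argument are correct, apart from one mislabeled identity in (b): $\langle(\D^\perp)^2\varphi,(\vu^\perp)^2\rangle = D_\vu^2\varphi$, not $D_{\vu^\perp}^2\varphi$; the conjugation-by-rotation argument you invoke gives exactly $D_\vu^2\varphi$, and only with this version is the $\Xc_\vu$-collapse you then perform legitimate.

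In part (c), however, your two key identities have the wrong sign. For $\vf=\D\D^\perp\varphi$ one has $f_{11}=-\partial_{x_1}\partial_{x_2}\varphi$, $f_{12}=\tfrac12\left(\partial_{x_1}^2-\partial_{x_2}^2\right)\varphi$, $f_{22}=\partial_{x_1}\partial_{x_2}\varphi$, and pairing with $\vu\otimes\vu$ gives $\left(u_1\partial_{x_1}+u_2\partial_{x_2}\right)\left(u_2\partial_{x_1}-u_1\partial_{x_2}\right)\varphi=-D_\vu D_{\vu^\perp}\varphi$ (check with $\varphi=x_1x_2$: the left side is $u_2^2-u_1^2$, while $D_\vu D_{\vu^\perp}\varphi=u_1^2-u_2^2$); likewise $\langle\vf,(\vu^\perp)^2\rangle=+D_\vu D_{\vu^\perp}\varphi$. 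Consequently $\Lc\vf=(D_{\vu^\perp}+D_{\vv^\perp})\varphi=-2u_2\partial_{x_1}\varphi$ and $\Tc\vf=+2u_2\partial_{x_1}\varphi$, the opposite of what you wrote; indeed your own expressions $\Lc\vf=2u_2\partial_{x_1}\varphi$ and $\Tc\vf=-2u_2\partial_{x_1}\varphi$ are inconsistent with the reconstruction formulas you then assert, since they would force $\varphi=-\frac{1}{2u_2}\int_0^\infty\Lc\vf(\vx+s\vev_1)\,ds$. With the corrected signs the stated formulas do follow. Finally, the reconstruction from $\Mc\vf$ alone is left as a sketch: you never verify that the coefficients $1+2u_1^2$ and $u_1^2-u_2^2$ actually emerge. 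The paper's computation is short: $\Mc\vf=-2u_2\partial_{x_2}\varphi-\tfrac12\Vc(\Delta\varphi)$, because the term surviving the integration by parts is the V-line transform of $\delta\D\varphi=\Delta\varphi$; then $D_\vu D_\vv\Vc h=-(D_\vu+D_\vv)h=-2u_2\partial_{x_2}h$ (your parenthetical has this sign wrong as well), together with $D_\vu D_\vv=u_2^2\partial_{x_2}^2-u_1^2\partial_{x_1}^2$ and $u_1^2+u_2^2=1$, gives $D_\vu D_\vv\Mc\vf=u_2\,\partial_{x_2}\bigl[(1+2u_1^2)\partial_{x_1}^2\varphi+(u_1^2-u_2^2)\partial_{x_2}^2\varphi\bigr]$, and applying $\Xc_{\vev_2}$ yields the stated PDE. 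Since the ``$+1$'' in $1+2u_1^2$ comes precisely from the relative sign between the $\partial_{x_2}\varphi$ term and the $\Vc(\Delta\varphi)$ term, the sign slips above are not harmless bookkeeping; this last step needs to be carried out explicitly.
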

\begin{proof} \textbf{\textit{Part (a).}}
If $\vf = \D^2 \varphi$, then using \eqref{eq: L and M on df}  we get 
\begin{align*}
 \Lc \vf (\vx) &= - \left\langle \D \varphi(\vx) , \vu + \vv \right\rangle = -2u_2 \frac{\partial \varphi(\vx)}{\partial x_2},\\
 \Mc \vf (\vx) &= - \left\langle \D \varphi(\vx) , (\vu + \vv)^ \perp\right\rangle = 2u_2 \frac{\partial \varphi(\vx)}{\partial x_1}, \quad \mbox{ since } \delta^\perp (\D \phi) = 0.   
\end{align*}
Now, integrating $\Lc \vf(\vx)$ 
along $x_2$ and $\Mc\vf(\vx)$ along $x_1$ we get
  $$ \varphi(\vx)= \frac{1}{2u_2}  \int_0^\infty \Lc \vf(\vx + s\vev_2)ds=- \frac{1}{2u_2}  \int_0^\infty \Mc \vf(\vx + s\vev_1)ds.
  $$
So, the potential symmetric 2-tensor field $\vf = \D^2 \varphi$  can be recovered from  $\Lc\vf$ or $\Mc \vf$. 
%----------------------------------------------------

\vspace{3mm}
\noindent  \textbf{\textit{Part (b).}} If $\vf = (\D^\perp)^2 \varphi$, then from the relations in \eqref{eq: T and M on dperpf}  we get \\
\begin{align*}
 \Tc \vf (\vx) &=  - \left\langle \D^\perp \varphi(\vx) , (\vu + \vv)^\perp \right\rangle = - 2u_2 \frac{\partial \varphi(\vx)}{\partial x_2},  \\
 \Mc \vf (\vx) &= - \left\langle \D^\perp \varphi(\vx) , \vu + \vv\right\rangle = -2u_2 \frac{\partial \varphi(\vx)}{\partial x_1},  \quad \mbox{ since } \delta (\D^\perp \phi) = 0.
\end{align*}
Integrating  $ \Tc \vf (\vx)$
along $x_2$ and $\Mc \vf (\vx)$ along $x_1$ , we get 
$$ \varphi(\vx)= \frac{1}{2u_2}  \int_0^\infty \Tc \vf(\vx + s\vev_2)ds =  \frac{1}{2u_2}  \int_0^\infty \Mc \vf(\vx + s\vev_1)ds. $$
Hence,  a solenoidal symmetric 2-tensor field $\vf = (\D^\perp)^2 \varphi$ can be recovered from $\Tc\vf$ or $\Mc\vf$.

%----------------------------------------------------

\vspace{3mm}
\noindent \textbf{\textit{Part (c).}}  If $\vf = \D\D^\perp \varphi$, then  using the commutativity $ \D\D^\perp \varphi = \D^\perp\D \varphi $ and the expressions from  \eqref{eq: L and M on df}, \eqref{eq: T and M on dperpf} we get
\begin{align*}
\Lc \vf (\vx) &= - \left\langle \D^\perp \varphi(\vx) , \vu + \vv \right\rangle = -2u_2 \frac{\partial \varphi(\vx)}{\partial x_1},\\
\Tc \vf (\vx) &= - \left\langle \D \varphi(\vx) , (\vu + \vv)^\perp \right\rangle = 2u_2 \frac{\partial \varphi(\vx)}{\partial x_1}. 
\end{align*}
 Integrating  $ \Lc \vf (\vx)$
 and $ \Tc \vf (\vx)$  
 along $x_1$, we get  
 \begin{align*}
  \varphi(\vx)= \frac{1}{2u_2}  \int_0^\infty \Lc \vf(\vx + s\vev_1)ds = -\frac{1}{2u_2}  \int_0^\infty \Tc \vf(\vx + s\vev_1)ds   
 \end{align*}
 which gives the reconstruction of $\vf$ from $\Lc \vf$ and $\Tc \vf$. 
 
{Finally, using $\vf = \D\D^\perp \varphi$ in the second relation of equation \eqref{eq: T and M on dperpf} we get 
 \begin{align*}
\Mc \vf (\vx) &= - \left\langle \D \varphi(\vx) , \vu + \vv\right\rangle  -\frac{1}{2}\Vc\left(\delta \D \varphi\right)(\vx) = - 2u_2 \frac{\partial \varphi(\vx)}{\partial x_2} - \frac{1}{2}\Vc\left(\Delta \varphi\right)(\vx).
 \end{align*}
Following the approach used in part (a) of Theorem \ref{th:special tensor recovery 1}, the above equation can be modified into
\begin{align*}
    \left(1+2u_1^2\right)\frac{\partial^2 \varphi}{\partial x_1^2} + \left(u_1^2 - u_2^2\right) \frac{\partial^2 \varphi}{\partial x_2^2} = -\frac{1}{u_2}\Xc_{\vev_2}\left(D_\vu D_\vv \Mc \vf \right)(\vx).
\end{align*}
Thus, $\varphi$ satisfies a second order partial differential equation, which can be solved (with appropriate initial/boundary conditions) to get $\varphi$.}
\end{proof}

%%%%%%%%%%%%%%%%%%%%%%%%%%%%%%%%%%%%%%%%%%%%%%%%%%%
\section{Recovery of a tensor field $\vf$ from $\Lc \vf$, $\Tc \vf$, and $\Mc\vf$}\label{sec: reconstruction L, T, and M}
In this section, we show the full recovery of a symmetric 2-tensor field $\vf$ from the knowledge of its longitudinal, transverse, and mixed V-line transforms.
%------------------------------------
\begin{theorem}\label{th: full recovery}
Let $\vf\in C_c^2\left(S^2;D_1\right)$. 
Then $\vf$  can be explicitly determined from $\Lc \vf$, $\Tc \vf$, and $\Mc\vf$.
\end{theorem}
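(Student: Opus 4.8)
The plan is to reduce the recovery of the three components $f_{11},f_{12},f_{22}$ to the trace formula of Theorem \ref{th:trace recovery} plus a pair of decoupled second–order PDEs. Set $h_{+}:=f_{11}+f_{22}$, $a:=f_{11}-f_{22}$, $b:=f_{12}$; it suffices to recover $h_{+}$, $a$, $b$. Adding \eqref{eq: Lf} and \eqref{eq:Tf} gives $\Lc\vf+\Tc\vf=\Vc h_{+}$, so Theorem \ref{th:trace recovery} recovers $h_{+}$ explicitly. The remaining task is to extract $a$ and $b$ from $\Lc\vf-\Tc\vf$ and $\Mc\vf$.

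Subtracting \eqref{eq:Tf} from \eqref{eq: Lf} and rewriting \eqref{eq:Mf} in terms of $a,b$ gives the coupled system
\begin{align*}
\Lc\vf-\Tc\vf &= (u_1^2-u_2^2)\,\Vc a + 4u_1u_2\,\Vc^{-}b,\\
\Mc\vf &= -u_1u_2\,\Vc^{-}a + (u_1^2-u_2^2)\,\Vc b.
\end{align*}
This cannot be uncoupled at the level of the transforms, so the next step is to differentiate. From \eqref{eq:commut} (used twice, exactly as in the proofs of Theorems \ref{th:kernel description} and \ref{th:special tensor recovery 1}), for a compactly supported $\varphi$ one has $D_\vu D_\vv\,\Vc\varphi=-2u_2\,\partial_{x_2}\varphi$ and $D_\vu D_\vv\,\Vc^{-}\varphi=2u_1\,\partial_{x_1}\varphi$ (recall $\vu=(u_1,u_2)$, $\vv=(-u_1,u_2)$). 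Applying $-\tfrac{1}{2u_2}D_\vu D_\vv$ to the two equations above converts them into a first–order linear system for $(a,b)$ whose right–hand sides $F_1,F_2$ are computable first–order derivative combinations of the data and, being equal to first–order derivative combinations of $a,b$, are supported in $\overline{D_1}$:
\begin{align*}
(u_1^2-u_2^2)\,\partial_{x_2}a - 4u_1^2\,\partial_{x_1}b &= F_1,\\
u_1^2\,\partial_{x_1}a + (u_1^2-u_2^2)\,\partial_{x_2}b &= F_2.
\end{align*}

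Now eliminate one unknown: differentiating these relations by $u_1^2\partial_{x_1}$, $4u_1^2\partial_{x_1}$, $(u_1^2-u_2^2)\partial_{x_2}$ and combining, the mixed second derivatives cancel and both $a$ and $b$ satisfy the same constant–coefficient equation
\[
4u_1^4\,\partial_{x_1}^2\phi + (u_1^2-u_2^2)^2\,\partial_{x_2}^2\phi = \text{(explicit data)},
\]
with right–hand side $(u_1^2-u_2^2)\partial_{x_2}F_2-u_1^2\partial_{x_1}F_1$ for $\phi=b$ and $(u_1^2-u_2^2)\partial_{x_2}F_1+4u_1^2\partial_{x_1}F_2$ for $\phi=a$, again supported in $\overline{D_1}$. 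In the admissible range $-1<u_1^2-u_2^2<1$ we automatically have $u_1\ne0$, so the coefficient $4u_1^4$ is positive; if moreover $u_1\ne u_2$ then $(u_1^2-u_2^2)^2>0$ and the operator is elliptic. Since $a,b\in C^2_c$ vanish outside $D_1$, they are the unique solutions of the Dirichlet problem for this operator on any disc containing $\overline{D_1}$, so $b$ is recovered, and then $a$ follows by integrating the second first–order equation along the $x_1$–axis (using $u_1\ne 0$ and compact support to fix the constant). If $u_1=u_2$ the operator degenerates, but that case is already settled by Corollary \ref{cor: u1 =u2 case} (equivalently, the first–order system collapses to two transport equations along $x_1$, solved by direct integration). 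Finally $f_{11}=\tfrac12(h_{+}+a)$, $f_{22}=\tfrac12(h_{+}-a)$, $f_{12}=b$.

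I expect the main obstacle to be the elimination/ellipticity step: one must check that eliminating $a$ (respectively $b$) leaves no $\partial_{x_1}\partial_{x_2}$ term and yields the \emph{same} operator with strictly positive leading coefficients for all proper V–line directions with $u_1\ne u_2$, so that the associated Dirichlet problem is well posed and its unique solution coincides with the true tensor component; and the degenerate configuration $u_1=u_2$ has to be peeled off separately, as indicated.
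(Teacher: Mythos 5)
Your proposal is correct and takes essentially the same route as the paper's proof: apply $D_\vu D_\vv$ to localize the transforms, reduce to a first-order system, eliminate to obtain a constant-coefficient elliptic equation for $f_{12}$ solved as a Dirichlet problem via compact support, recover the remaining components from the trace formula plus a first-order integration, and peel off $u_1=u_2$ via Corollary \ref{cor: u1 =u2 case}. Your elliptic operator is in fact identical to the paper's, since $2u_1^2\left[1+(u_1^2-u_2^2)\right]=4u_1^4$ when $u_1^2+u_2^2=1$; the only cosmetic difference is your trace/difference change of variables, which symmetrizes the elimination and yields the same equation for $f_{11}-f_{22}$ as well.
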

\begin{proof}{Recall, if $u_1 =  u_2$ then the result is already discussed in Corollary \ref{cor: u1 =u2 case}, therefore we will assume $u_1 \neq u_2$ in this proof.}
To derive the result, we first show that $f_{12}$ satisfies a boundary value problem for a second order PDE, the coefficients of which are explicitly defined by the available data (i.e. $\Lc \vf$, $\Tc \vf$, and $\Mc\vf$). This boundary value problem has a unique solution, which can be explicitly expressed in terms of the known data. We also derive an identity, which allows recovery of $f_{11}$ from the knowledge of $f_{12}$ and the available data. Finally, from the knowledge of $f_{11}$ and the pair of transforms $\Lc$ and $\Tc$, we recover $f_{22}$ using Theorem \ref{th:trace recovery}.

Recall $$D_\vu  = u_1 \partial_{x_1} +u_2 \partial_{x_2} \quad \mbox{ and } \quad D_\vv = -u_1 \partial_{x_1} +u_2 \partial_{x_2}.$$
 Using the definitions of $\Lc$, $\Tc$ and $\Mc$ and applying directional derivatives along $\vu$, $\vv$ we obtain 
\begin{align}\label{eq:DuDvLf}
-D_\vu D_\vv  \Lc \vf = D_\vv \langle \vf, \vu^2\rangle  + D_\vu \langle \vf, \vv^2\rangle 
= - 4 u_1^2 u_2 \partial_{x_1}  f_{12} +2 u_1^2 u_2 \partial_{x_2} f_{11} + 2u_2^3  \partial_{x_2}f_{22}.
\end{align}
\begin{align}\label{eq:DuDvTf}
-D_\vu D_\vv  \Tc \vf = D_\vv \langle \vf, \vu_\perp^2\rangle  + D_\vu \langle \vf, \vv_\perp^2\rangle = 4 u_1^2 u_2 \partial_{x_1}  f_{12} +2 u_2^3 \partial_{x_2} f_{11} + 2u_1^2u_2  \partial_{x_2}f_{22}.
\end{align}
\begin{align}\label{eq:DuDvMf}
-D_\vu D_\vv  \Mc \vf &= D_\vv \langle \vf,\vu \odot \vu_\perp\rangle  + D_\vu \langle \vf,\vv \odot \vv_\perp\rangle \nonumber\\ & = 2 u_1^2 u_2 \partial_{x_1}  f_{11} - 2 u_1^2 u_2 \partial_{x_1} f_{22} + 2u_2 (u_1^2 -u_2^2)\partial_{x_2} f_{12}.
\end{align}
Multiplying  equation \eqref{eq:DuDvLf} by  $u_1^2$ and \eqref{eq:DuDvTf} by $u_2^2$, and then subtracting one from another we obtain
\begin{align}\label{eq:f_11}
u_2^2 D_\vu D_\vv  \Tc \vf - u_1^2 D_\vu D_\vv  \Lc \vf =  - 4 u_1^2 u_2 \partial_{x_1}  f_{12} + 2 u_2(u_1^2 -u_2^2) \partial_{x_2} f_{11}.
\end{align}
Differentiating \eqref{eq:DuDvTf} with respect to $x_1$,  \eqref{eq:DuDvMf} with respect to $x_2$, and then adding them we get
\begin{align}\label{eq:f_11,b=0}
-\partial_{x_1} D_\vu D_\vv  \Tc \vf-\partial_{x_2} D_\vu D_\vv  \Mc \vf &= 2 u_2 \partial_{x_1}\partial_{x_2} f_{11} + 2 u_2 \left[2u_1^2 \partial_{x_1}^2 + (u_1^2 -u_2^2)\partial_{x_2}^2\right]f_{12}.
\end{align}
Multiplying both sides of equation (\ref{eq:f_11,b=0}) by $(u_1^2 -u_2^2)$ we get
\begin{align*}
   &- (u_1^2 -u_2^2)\left(\partial_{x_1} D_\vu D_\vv  \Tc \vf +\partial_{x_2} D_\vu D_\vv  \Mc \vf \right)\\
   & \qquad \qquad = 2 u_2(u_1^2 -u_2^2) \partial_{x_1}\partial_{x_2} f_{11} 
  + 2 u_2(u_1^2 -u_2^2) \left[2u_1^2 \partial_{x_1}^2 + (u_1^2-u_2^2)\partial_{x_2}^2\right]f_{12}.
\end{align*}
Using the expression for $2 u_2(u_1^2 -u_2^2) \partial_{x_2} f_{11}$ from equation \eqref{eq:f_11} and simplifying further, we get
\begin{align}\label{eq:f_12}
&2 u_1^2[1+(u_1^2 -u_2^2)] \partial_{x_1}^2 f_{12} + ( u_1^2-u_2^2)^2 \partial_{x_2}^2 f_{12}\nonumber\\
&= -\frac{1}{2u_2}D_\vu D_\vv\left[u_1^2 \partial_{x_1}  \left( \Tc \vf-\Lc \vf\right)+ (u_1^2 -u_2^2)\partial_{x_2} \Mc \vf\right]. 
\end{align}

\noindent Equation \eqref{eq:f_12} is an elliptic PDE with respect to $f_{12}$ and can be written as follows:
$$ a \partial_{x_1}^2 f_{12}+ b\partial_{x_2}^2 f_{12} = -g, $$
where  $a=2 u_1^2\left[1+(u_1^2 -u_2^2)\right ]  > 0 $, ~ $b=\left( u_1^2-u_2^2\right)^{2} >  0$ (since $u_1 \neq u_2$), and 
$$g = \frac{1}{2u_2}\left[u_1^2 \partial_{x_1} D_\vu D_\vv \left( \Tc \vf-\Lc \vf\right)+(u_1^2 -u_2^2)\partial_{x_2} D_\vu D_\vv  \Mc \vf\right] $$ is a known quantity in terms of the given data. Therefore, $f_{12}$ can be found by solving the following elliptic boundary value problem:
\begin{align}\label{eq:elliptic equation for f12}
    \left\{\begin{array}{rll}
   a \partial_{x_1}^2 f_{12}+ b\partial_{x_2}^2 f_{12}&= -g      & \mbox{ in } D_1,  \\
     f_{12} &= 0    & \mbox{ on } \partial D_1.
    \end{array}\right.
\end{align}
Once $f_{12}$ is known, we use \eqref{eq:f_11} to get $f_{11}$. Finally, using the knowledge of the trace $f_{11}+f_{22}$, we obtain $f_{22}$.
\end{proof}

%%%%%%%%%%%%%%%%%%%%%%%%%%%%%%%%%%%%%%%%%%%%%%%%%%%%%%%%
\section{Reconstruction of a tensor field using integral moments}\label{sec:moments}
It is known from \cite[Theorem 2.17.2]{Sharafutdinov_Book} that a symmetric $m$-tensor field is determined by its first $(m+1)$-integral moment transforms \textit{integrating along straight lines}. In this section we show that a similar statement does not hold in the V-line setup. We prove this by showing that $\Lc^k \vf$ can be expressed in terms of  $\Lc^{k-1} \vf$ and $\Lc^{k-2} \vf$ for $k \geq 2$. In other words, assuming the knowledge of $\Lc=\Lc^0$ and $\Lc^1$, the moments of higher order $k\geq 2$ do not provide any new information. We also show that $\vf$ can be recovered from the knowledge of $\Lc \vf$, $\Lc^1 \vf$, and $\Tc \vf$. Similar statements also hold for the transverse and mixed VLTs and their moment transforms. We state them along with the corresponding analogs for the longitudinal transform. Since the proofs for all these cases are identical, we only discuss the proofs for  the longitudinal case.

\begin{lem}\label{lem: recurrence relation for L}
   Let $\vf\in C_c^2\left(S^2;D_1\right)$. Then we have the following recurrence relations:
\begin{align}
     - D_\vu D_\vv\Lc^k\vf &= k(k-1)\Lc^{k-2}\vf+ k (D_\vu +D_\vv)\Lc^{k-1}\vf, \;\;\;\;k\ge2, \label{eq:iterative relation}\\
     - D_\vu D_\vv\Tc^k\vf &= k(k-1)\Tc^{k-2}\vf+ k (D_\vu +D_\vv)\Tc^{k-1}\vf, \;\;\;\;k\ge2, \label{eq:iterative relation for T}  \\
 - D_\vu D_\vv\Mc^k\vf&= k(k-1)\Mc^{k-2}\vf+ k (D_\vu +D_\vv)\Mc^{k-1}\vf, \;\;\;\;k\ge2 \label{eq:iterative relation M}.   
   \end{align}
\end{lem}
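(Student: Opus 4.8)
The plan is to reduce all three recurrences to a single integration‑by‑parts identity for the moment divergent beam transform, combined with the fact that the two ray directions $\vu$ and $\vv$ are transverse. Since, as the statement notes, the proofs of \eqref{eq:iterative relation}, \eqref{eq:iterative relation for T} and \eqref{eq:iterative relation M} are identical, I would carry out the longitudinal case in full and then observe that the argument is insensitive to the particular scalar integrands that appear.

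The first step is to record, for a compactly supported function $h$ and $k\ge 1$, the differentiation rule $D_{\vu}\Xc^k_{\vu}h = -k\,\Xc^{k-1}_{\vu}h$. This follows by writing $D_{\vu}\Xc^k_{\vu}h(\vx)=\int_0^\infty \tfrac{d}{dt}\big[h(\vx+t\vu)\big]\,t^k\,dt$ and integrating by parts in $t$: the boundary term at $t=\infty$ vanishes by compact support, and the one at $t=0$ vanishes because of the factor $t^k$ with $k\ge 1$. For $k=1$ this reduces to \eqref{eq:commut}. The complementary rule is that $D_{\vv}$, being a derivative transverse to the $\vu$‑ray of integration, passes under the integral sign, i.e. $D_{\vv}\Xc^k_{\vu}h=\Xc^k_{\vu}(D_{\vv}h)$, and symmetrically with $\vu$ and $\vv$ interchanged.

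With these two rules in hand, write $\Lc^k\vf=\Xc^k_{\vu}\phi+\Xc^k_{\vv}\psi$ with $\phi=\langle\vf,\vu^2\rangle$ and $\psi=\langle\vf,\vv^2\rangle$. Applying $D_{\vv}$ and then $D_{\vu}$, using a rule at each step, gives for $k\ge 2$ that $-D_{\vu}D_{\vv}\Lc^k\vf=k\big[\Xc^{k-1}_{\vu}(D_{\vv}\phi)+\Xc^{k-1}_{\vv}(D_{\vu}\psi)\big]$. Separately, expanding $(D_{\vu}+D_{\vv})\Lc^{k-1}\vf$ with the same rules yields $(D_{\vu}+D_{\vv})\Lc^{k-1}\vf=\Xc^{k-1}_{\vu}(D_{\vv}\phi)+\Xc^{k-1}_{\vv}(D_{\vu}\psi)-(k-1)\Lc^{k-2}\vf$, where one recognizes $\Xc^{k-2}_{\vu}\phi+\Xc^{k-2}_{\vv}\psi=\Lc^{k-2}\vf$. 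Substituting the second identity into the first produces exactly \eqref{eq:iterative relation}. For \eqref{eq:iterative relation for T} and \eqref{eq:iterative relation M} one repeats the computation verbatim, replacing $(\phi,\psi)$ by $(\langle\vf,(\vu^\perp)^2\rangle,\langle\vf,(\vv^\perp)^2\rangle)$ and by $(\langle\vf,\vu\odot\vu^\perp\rangle,\langle\vf,\vv\odot\vv^\perp\rangle)$ respectively; the algebra never uses the specific form of these integrands, only that $\Lc^k$, $\Tc^k$, $\Mc^k$ are each a sum of a $k$‑th moment $\vu$‑transform and a $k$‑th moment $\vv$‑transform.

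I do not expect a serious obstacle. The one point requiring care is the first step: verifying that the integrated boundary terms genuinely vanish (which is precisely where compact support and $k\ge 1$ enter) and keeping straight that $D_{\vu}$ lowers the moment of $\Xc^k_{\vu}$ but merely commutes past $\Xc^k_{\vv}$. Once the two differentiation rules are established, the derivation of the recurrences is a short, purely bookkeeping computation.
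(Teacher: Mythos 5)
Your proof is correct and rests on the same key identity as the paper's, namely the integration-by-parts rule $D_\vu\Xc^k_\vu h=-k\,\Xc^{k-1}_\vu h$ (with compact support killing the boundary terms) together with the treatment of the other directional derivative; the only real difference is bookkeeping, since you pass $D_\vv$ under the $\Xc_\vu$-integral and handle all $k\ge 2$ in one uniform computation, while the paper rewrites $D_\vv=2u_2\partial_{x_2}-D_\vu$, works out $k=1,2$ explicitly, and obtains the general case by repeating the argument. Your closing remark that the computation never uses the specific integrands is exactly how the paper disposes of the transverse and mixed cases, so the proposal matches the intended proof.
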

\begin{proof}
Using integration by parts, we get the following formulas:
\begin{align}\label{eq:integration by parts}
 D_\vu\mathcal{X}^1_{\vu}\left( \left\langle \vf , \vu^2 \right\rangle \right)=- \mathcal{X}_{\vu}\left( \left\langle \vf , \vu^2 \right\rangle \right) \quad \mbox{ and } \quad  D_\vu \mathcal{X}^2_{\vu}\left( \left\langle \vf , \vu^2 \right\rangle \right)= -2 \mathcal{X}^1_{\vu}\left( \left\langle \vf , \vu^2 \right\rangle \right).  
\end{align}
Applying $D_\vv$ to the first relation and using the identity $D_\vv= 2 u_2 \partial_{x_2}-D_\vu$, we get 
$$D_\vv D_\vu\mathcal{X}^1_{\vu}\left( \left\langle \vf , \vu^2 \right\rangle \right)= -D_\vv\mathcal{X}_{\vu}\left( \left\langle \vf , \vu^2 \right\rangle \right) = - \left\langle \vf , \vu^2 \right\rangle  - 2 u_2 \partial_{x_2}\mathcal{X}_{\vu}\left( \left\langle \vf , \vu^2 \right\rangle \right). $$
Similarly, we have $$D_\vu D_\vv\mathcal{X}^1_{\vv}\left( \left\langle \vf , \vv^2 \right\rangle \right)= -D_\vu\mathcal{X}_{\vv}\left( \left\langle \vf , \vv^2 \right\rangle \right) = -\left\langle \vf , \vv^2 \right\rangle -  2 u_2 \partial_{x_2}\mathcal{X}_{\vv}\left( \left\langle \vf , \vv^2 \right\rangle \right). $$
Combining the above two equations, we have the following relation:
\begin{align}\label{eq:L1 in terms of L}
  -D_\vu D_\vv\Lc^1\vf =  \left\langle \vf , \vu^2 \right\rangle + \left\langle \vf , \vv^2 \right\rangle + 2 u_2 \partial_{x_2}\Lc\vf = 2 \left(u_1^2f_{11}+u_2^2f_{22}\right)+ (D_\vu + D_\vv)\Lc\vf.
\end{align}
Next, we consider the second identity in \eqref{eq:integration by parts} and use a similar analysis as above to get the following identities for the second integral moment of $\Lc$:
$$ D_\vv D_\vu \mathcal{X}^2_{\vu}\left( \left\langle \vf , \vu^2 \right\rangle \right)= D_\vv\left(-2 \mathcal{X}^1_{\vu}\left( \left\langle \vf , \vu^2 \right\rangle \right)\right)= -2\mathcal{X}_{\vu}\left( \left\langle \vf , \vu^2 \right\rangle \right) -4 u_2 \partial_{x_2}\mathcal{X}^1_{\vu}\left( \left\langle \vf , \vu^2 \right\rangle \right), $$
$$ D_\vu D_\vv \mathcal{X}^2_{\vv}\left( \left\langle \vf , \vv^2 \right\rangle \right)= D_\vu\left(-2 \mathcal{X}^1_{\vv}\left( \left\langle \vf , \vv^2 \right\rangle \right)\right)= -2\mathcal{X}_{\vv}\left( \left\langle \vf , \vv^2 \right\rangle \right) -4 u_2 \partial_{x_2}\mathcal{X}^1_{\vv}\left( \left\langle \vf , \vv^2 \right\rangle \right). $$
Combining these two relations, we have 
$$ -D_\vu D_\vv\Lc^2\vf= 2\Lc\vf + 4 u_2 \partial_{x_2}\Lc^1\vf= 2\Lc\vf + 2(D_\vu + D_\vv)\Lc^1\vf.$$
Repeating the same idea for $k \geq 2$ we arrive at the recurrence relation (\ref{eq:iterative relation}). The relations \eqref{eq:iterative relation for T} and \eqref{eq:iterative relation M} are proved similarly.
\end{proof}
Relation (\ref{eq:iterative relation}) tells us that the higher order  moments $\Lc^k \vf$ ($k \geq 2$) of $\vf$ can be generated from the knowledge of $\Lc \vf$ and $\Lc^1 \vf$. Therefore,  $\Lc^k \vf$ ($k \geq 2$) is not giving any new information, hence $\vf$ cannot be reconstructed only from its integral moment transforms. 

For example, let $\displaystyle \vf(\vx) = \begin{pmatrix}
      u_2^2 & 0\\
        0 & -u_1^2
\end{pmatrix} \varphi(\vx)$, where 
$$ \displaystyle  \varphi(\vx)= \begin{cases}
  \exp\left(-\frac{1}{1-\lvert\vx\rvert^{2}}\right), &\mbox{for } \lvert\vx\rvert<1 , \\  
0, & \mbox{for } \lvert\vx\rvert\ge1.
    \end{cases} 
$$  
Then $\Lc(\vf)(\vx) \equiv  \Lc^1(\vf)(\vx)\equiv0$, i.e.  $\displaystyle \vf \in \ker(\Lc) \cap \ker(\Lc^1) $. Therefore, $\vf \not\equiv0$ cannot be recovered just from the knowledge of $\Lc(\vf)(\vx)$ and $ \Lc^1(\vf)(\vx) $.

However, as we discuss below, one can expect to recover some partial information about $\vf$ from $\Lc\vf$ and $\Lc^1\vf$, as well as a full recovery of the field with some additional data. 

%--------------------------------------------------
\begin{theorem}\label{th: full recovery L, L1, T}
If $u_1\ne u_2$, then $\vf\in C_c^2\left(S^2;D_1\right)$ 
can be recovered from $\Lc \vf$, $\Lc^1 \vf$, and $\Tc \vf$.
\end{theorem}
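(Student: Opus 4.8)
The plan is to peel off the three scalar components of $\vf$ one linear combination at a time, using the first moment $\Lc^1\vf$ to supply precisely the information about the diagonal entries $f_{11},f_{22}$ that $\Lc\vf$ and $\Tc\vf$ cannot resolve on their own. I assume throughout that $u_1\ne u_2$ (hence $u_1^2\ne u_2^2$) and recall that a proper V-line has $u_1\ne0$ and $u_2\ne0$.

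First I would exploit the base case of the recurrence in Lemma \ref{lem: recurrence relation for L}: equation \eqref{eq:L1 in terms of L} states that
$$-D_\vu D_\vv\Lc^1\vf = 2\left(u_1^2 f_{11}+u_2^2 f_{22}\right)+ (D_\vu+D_\vv)\Lc\vf .$$
Since $\Lc\vf$ and $\Lc^1\vf$ are known on all of $\Rb^2$ and the operators on the left are plain differentiations, this recovers the compactly supported scalar function $\psi:=u_1^2 f_{11}+u_2^2 f_{22}=-\tfrac12[D_\vu D_\vv\Lc^1\vf + (D_\vu+D_\vv)\Lc\vf]$. Next, using the simplified form \eqref{eq: Lf} of $\Lc\vf$ together with linearity of the scalar V-line transform $\Vc$, I would write
$$\Lc\vf = u_1^2\,\Vc f_{11} + 2u_1u_2\,\Vc^{-} f_{12} + u_2^2\,\Vc f_{22} = \Vc\psi + 2u_1u_2\,\Vc^{-} f_{12},$$
so that $\Vc^{-} f_{12}=(2u_1u_2)^{-1}(\Lc\vf-\Vc\psi)$ is known ($\Vc\psi$ being computed directly from the $\psi$ obtained above, and using $u_1u_2\ne0$); inverting the signed V-line transform via \cite[Theorem 7]{amb-lat_2019} then yields $f_{12}$. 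Finally, Theorem \ref{th:trace recovery} recovers the trace $f_{11}+f_{22}$ from $\Lc\vf$ and $\Tc\vf$, and combining this with $\psi=u_1^2 f_{11}+u_2^2 f_{22}$ gives a $2\times2$ linear system for $(f_{11},f_{22})$ with determinant $u_2^2-u_1^2\ne0$; solving it completes the reconstruction of $\vf$.

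I do not anticipate a genuine analytic obstacle: the proof is essentially the assembly of the moment identity \eqref{eq:L1 in terms of L}, Theorem \ref{th:trace recovery}, and the known inversion of $\Vc^{-}$. The points that need care are the bookkeeping (tracking which of $\Lc\vf$, $\Lc^1\vf$, $\Tc\vf$ has been ``consumed'' at each step, so that all three are genuinely used) and checking that the scalars one divides by, $u_1u_2$ and $u_2^2-u_1^2$, are nonzero under the standing hypotheses — this is exactly where $u_1\ne u_2$ enters, and it explains why this data set is insufficient in the orthogonal case handled separately in Corollary \ref{cor: u1 =u2 case}. The analogous recoveries from $(\Tc\vf,\Tc^1\vf,\Lc\vf)$ and from the mixed transform together with its first moment go through by the same scheme, with the base cases of \eqref{eq:iterative relation for T} and \eqref{eq:iterative relation M} replacing \eqref{eq:L1 in terms of L}.
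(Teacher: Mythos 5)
Your argument is correct and rests on the same three pillars as the paper's proof: the base-case moment identity \eqref{eq:L1 in terms of L} (which yields $u_1^2f_{11}+u_2^2f_{22}$, exactly the paper's relation \eqref{eq:modified trace of f}), the trace recovery of Theorem \ref{th:trace recovery} using $\Tc\vf$, and the final $2\times 2$ linear system whose determinant is nonzero precisely because $u_1^2\neq u_2^2$. The one place you deviate is the recovery of $f_{12}$: the paper stays entirely on the data side, applying the operator $\frac{D_\vu D_\vv\Xc_{\vev_2}}{u_2}+D_\vu+D_\vv$ to $\Lc\vf$, combining with $D_\vu D_\vv\Lc^1\vf$, and simplifying to the explicit closed-form reconstruction \eqref{eq: f12 from moments}; you instead recover $\psi=u_1^2f_{11}+u_2^2f_{22}$ pointwise first, compute the forward transform $\Vc\psi$, subtract it from $\Lc\vf$ via \eqref{eq: Lf} to isolate $\Vc^{-}f_{12}$ (legitimate since $u_1u_2\neq0$ for a proper V-line), and then invert the signed V-line transform by the cited result of \cite{amb-lat_2019}, which indeed holds for arbitrary proper ray directions and not only in the case $u_1=u_2$ where the paper previously invoked it. The two routes are equivalent---the paper's operator manipulation is in effect an inversion of $\Vc^{-}$ carried out by hand---but yours is more modular and shorter, at the cost of requiring a forward V-line transform of already-reconstructed data instead of a single closed-form expression in $\Lc\vf$ and $\Lc^1\vf$. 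Your remarks on where $u_1\neq u_2$ and $u_1u_2\neq 0$ enter, and on why $u_1=u_2$ must be excluded, are consistent with the paper's discussion following the theorem.
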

%--------------------------------------------------
\begin{proof}
Recall that from equation \eqref{eq: Lf} we have
\begin{align}
\Lc \vf (\vx) &= u_1^2 \Vc(f_{11})(\vx) + 2u_1u_2 \Vc^-(f_{12})(\vx) + u_2^2 \Vc(f_{22})(\vx).\label{eq: Lf-n}
\end{align}
Using this relation and the inversion formula for $\Vc$, equation \eqref{eq:L1 in terms of L} can be re-written as
\begin{align}
    -D_\vu D_\vv \Lc^1 \vf &= u_1^2\left( \frac{D_\vu D_\vv\mathcal{X}_{\vev_2}}{u_2}+D_\vu + D_\vv\right) \Vc(f_{11}) + 2u_1u_2\left(D_\vu+ D_\vv \right) \Vc^-(f_{12})\nonumber\\
& \quad  + u_2^2 \left( \frac{D_\vu D_\vv \mathcal{X}_{\vev_2}}{u_2}+D_\vu + D_\vv\right)\Vc(f_{22})\label{eq: L1}.
\end{align}
Applying the operator $\left( \displaystyle\frac{D_\vu D_\vv \mathcal{X}_{\vev_2}}{u_2}+D_\vu + D_\vv\right)$ to equation  \eqref{eq: Lf-n} and adding it to the negative of equation \eqref{eq: L1},  we obtain
\begin{align}
 \left( \frac{D_\vu D_\vv \Xc_{\vev_2}}{u_2}+ D_\vu + D_\vv\right)\Lc\vf (\vx)  + D_\vu D_\vv \Lc^1\vf (\vx) &= 2u_1 D_\vu D_\vv\Xc_{\vev_2} \Vc^-(f_{12})(\vx) \label{eq:sign}.
 \end{align}
Observe that $$D_\vu D_\vv\Vc^-(f_{12})=-D_\vv f_{12}+D_\vu f_{12} = 2u_1 \partial_{x_1}f_{12}.$$ 
With this, the above equation simplifies to 
\begin{align*}
 \left( \frac{D_\vu D_\vv \Xc_{\vev_2}}{u_2}+ D_\vu + D_\vv\right)\Lc\vf (\vx)  + D_\vu D_\vv \Lc^1\vf (\vx) &= 4u_1^2\partial_{\x_1}\Xc_{\vev_2}(f_{12})(\vx),
 \end{align*}
 which yields
 \begin{align}\label{eq: f12 from moments}
f_{12}(\vx) = \frac{1}{4u_1^2} \partial_{x_2}\Xc_{\vev_1}\left[\left( \frac{D_\vu D_\vv \Xc_{\vev_2}}{u_2}+ D_\vu + D_\vv\right)\Lc\vf (\vx)  + D_\vu D_\vv \Lc^1\vf (\vx)\right].
 \end{align}
Therefore, $f_{12}$ can be expressed explicitly in terms of $\Lc \vf$ and $\Lc^1 \vf$.

Furthermore, equation \eqref{eq:L1 in terms of L} gives the following:
\begin{align}\label{eq:modified trace of f}
\left(u_1^2f_{11}+u_2^2f_{22}\right)(\vx) =  - \frac{1}{2} \left[D_\vu D_\vv\Lc^1\vf (\vx) +  (D_\vu + D_\vv)\Lc\vf (\vx)\right].
\end{align}
If we also have the knowledge of $\Tc \vf$, in addition to $\Lc \vf$ and $\Lc^1 \vf$, then the trace of $\vf$ is also known from Theorem \ref{th:trace recovery}, that is, we have  
\begin{align*}
    \left(f_{11} + f_{22}\right)(\vx) = \frac{1}{2 u_2} D_\vu D_\vv  \int_{0}^{\infty}\left(\Lc \vf + \Tc \vf \right)(\vx+ t \vev_2)\,dt.
\end{align*}
Since $u_1\ne u_2$, one can solve the above equation together with \eqref{eq:modified trace of f} to recover $f_{11}$ and $f_{22}$. 
\end{proof}

\vspace{1mm}

Note from equation \eqref{eq:L1 in terms of L} that when $u_1=u_2$, the knowledge of $\Lc\vf$ and $\operatorname{trace}(\vf)=f_{11}+f_{22}$ (or, equivalently, $\Lc\vf$ and $\Tc\vf$) uniquely determines $\Lc^1\vf$. Therefore, the exclusion of the case $u_1=u_2$ from the previous theorem is not an artifact of the presented method of proof.

\vspace{1mm}

\begin{theorem}\label{th: full recovery from  various moments }
If $u_1\ne u_2$, then $\vf\in C_c^2\left(S^2;D_1\right)$ 
can be recovered from any of the following three combinations:
\begin{enumerate}
    \item $\Tc \vf$, $\Tc^1 \vf$, and $\Lc\vf$.
    \item $\Mc \vf$, $\Mc^1 \vf$, and $\Lc\vf$.
    \item $\Mc \vf$, $\Mc^1 \vf$, and $\Tc\vf$.
\end{enumerate}
\end{theorem}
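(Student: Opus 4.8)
The strategy for all three cases is to mimic the structure of the proof of Theorem~\ref{th: full recovery L, L1, T}: use a first moment transform together with its zeroth order transform to extract one component (an off-diagonal or a specific diagonal combination) explicitly, use the recurrence-type identity \eqref{eq:L1 in terms of L} (and its analogs) to obtain a second linear relation among the diagonal entries, and then supplement with the third transform (via Theorem~\ref{th:trace recovery}, or via the $\Mc$ kernel relation in Theorem~\ref{th:kernel description}(c)) to close the system. Concretely, I would first establish the analog of \eqref{eq:L1 in terms of L} for $\Tc$ and for $\Mc$; from Lemma~\ref{lem: recurrence relation for L} with $k=1$ one gets
\begin{align*}
-D_\vu D_\vv\Tc^1\vf &= 2\left(u_2^2 f_{11}+u_1^2 f_{22}\right)+(D_\vu+D_\vv)\Tc\vf,\\
-D_\vu D_\vv\Mc^1\vf &= 2\left(u_1^2-u_2^2\right)f_{12}+(D_\vu+D_\vv)\Mc\vf,
\end{align*}
after reading off the appropriate inner products $\langle\vf,(\vu^\perp)^2\rangle+\langle\vf,(\vv^\perp)^2\rangle$ and $\langle\vf,\vu\odot\vu^\perp\rangle+\langle\vf,\vv\odot\vv^\perp\rangle$ from the identities listed before Theorem~\ref{th:trace recovery}.

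For case~1 ($\Tc\vf,\Tc^1\vf,\Lc\vf$): I would run the same computation as in Theorem~\ref{th: full recovery L, L1, T} but with the roles of $\Lc$ and $\Tc$ interchanged — applying $\left(\tfrac{D_\vu D_\vv\Xc_{\vev_2}}{u_2}+D_\vu+D_\vv\right)$ to \eqref{eq:Tf} and subtracting the analog of \eqref{eq: L1} to isolate $\Vc^-(f_{12})$, hence $f_{12}$, explicitly from $\Tc\vf$ and $\Tc^1\vf$. The $\Tc$-analog of \eqref{eq:modified trace of f} then gives $u_2^2 f_{11}+u_1^2 f_{22}$, and the trace $f_{11}+f_{22}$ comes from Theorem~\ref{th:trace recovery} using $\Lc\vf$ and $\Tc\vf$; since $u_1\ne u_2$ the $2\times2$ linear system for $(f_{11},f_{22})$ is nonsingular. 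For case~2 ($\Mc\vf,\Mc^1\vf,\Lc\vf$): the $\Mc$-analog of \eqref{eq:L1 in terms of L} already delivers $f_{12}$ \emph{directly} (the right side is $2(u_1^2-u_2^2)f_{12}+(D_\vu+D_\vv)\Mc\vf$, and $u_1^2-u_2^2\ne0$). With $f_{12}$ known, \eqref{eq:Mf} shows $\Mc\vf$ determines $\Vc^-(f_{11}-f_{22})$, so inverting the signed V-line transform (\cite[Theorem~7]{amb-lat_2019}) recovers $f_{11}-f_{22}$; finally \eqref{eq: Lf} with $f_{12}$ known yields $\Vc(u_1^2 f_{11}+u_2^2 f_{22})$, whose inversion gives $u_1^2 f_{11}+u_2^2 f_{22}$, and again $u_1\ne u_2$ makes the pair $\{f_{11}-f_{22},\ u_1^2f_{11}+u_2^2f_{22}\}$ solvable for $f_{11},f_{22}$. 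Case~3 ($\Mc\vf,\Mc^1\vf,\Tc\vf$) is identical to case~2 except that the second diagonal relation $\Vc(u_2^2 f_{11}+u_1^2 f_{22})$ is extracted from \eqref{eq:Tf} instead of \eqref{eq: Lf}; the nondegeneracy condition is the same $u_1\ne u_2$.

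The routine but slightly delicate parts are: (i) verifying the three first-moment identities above by the same integration-by-parts argument as in Lemma~\ref{lem: recurrence relation for L} (the $k=1$ case), and (ii) checking that in each case the resulting $2\times2$ linear system for $(f_{11},f_{22})$ has determinant a nonzero multiple of $(u_1^2-u_2^2)$, so that the hypothesis $u_1\ne u_2$ is exactly what is needed. I do not expect a genuine obstacle: every ingredient — the recurrence relations, the inversion formulas for $\Vc$ and $\Vc^-$, and Theorem~\ref{th:trace recovery} — is already available, and the argument is a direct transcription of the proof of Theorem~\ref{th: full recovery L, L1, T} with bookkeeping adjusted to the transforms involved. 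If anything requires care, it is making sure that when $u_1=u_2$ the off-diagonal extraction in case~1 degenerates (as it must, parallel to the remark following Theorem~\ref{th: full recovery L, L1, T}), whereas in cases~2 and~3 the $\Mc$-based extraction of $f_{12}$ still works but the final diagonal system collapses — consistent with the stated restriction $u_1\ne u_2$ and with Corollary~\ref{cor: u1 =u2 case}, which handles the excluded case by other means.
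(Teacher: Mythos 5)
Your proposal is correct and follows exactly the route the paper intends: the paper's own proof of this theorem is the one-line remark that it is ``similar to the longitudinal case,'' i.e., to Theorem \ref{th: full recovery L, L1, T}, and your three recovery chains (the $k=1$ moment identities for $\Tc$ and $\Mc$, extraction of $f_{12}$, then two independent diagonal relations inverted via $\Vc$, $\Vc^-$ and Theorem \ref{th:trace recovery}) are precisely that transcription. One bookkeeping caveat: your comments on where $u_1\ne u_2$ enters are reversed in places --- in cases 2 and 3 the final diagonal pair $\{f_{11}-f_{22},\ u_1^2f_{11}+u_2^2f_{22}\}$ (resp. $u_2^2f_{11}+u_1^2f_{22}$) has determinant $\pm(u_1^2+u_2^2)=\pm1$ and never degenerates, the hypothesis being needed only for the $f_{12}$-extraction $-D_\vu D_\vv\Mc^1\vf=2\left(u_1^2-u_2^2\right)f_{12}+(D_\vu+D_\vv)\Mc\vf$, whereas in case 1 the $f_{12}$-extraction carries the coefficient $4u_1^2$, which survives $u_1=u_2$, and it is the pair $\{f_{11}+f_{22},\ u_2^2f_{11}+u_1^2f_{22}\}$ that collapses there. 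This mislabeling affects only your aside about the excluded case and not the validity of the argument under the stated hypothesis.
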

\begin{proof}
Similar to the longitudinal case above. 
\end{proof}
%------------------------------------------------------
\vspace{1mm}

\begin{theorem}\label{th: full recovery L, L1, M}
Any tensor field $\vf\in C_c^2\left(S^2;D_1\right)$  can be recovered from $\Lc \vf$, $\Lc^1 \vf$, and $\Mc\vf$.
\end{theorem}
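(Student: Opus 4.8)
The plan is to build the reconstruction out of pieces already isolated in the proof of Theorem~\ref{th: full recovery L, L1, T}, supplemented by one identity drawn from $\Mc\vf$; the point is that none of these pieces requires the hypothesis $u_1\neq u_2$. First I would recover the off-diagonal component: formula \eqref{eq: f12 from moments} expresses $f_{12}$ explicitly through $\Lc\vf$ and $\Lc^1\vf$ alone (that derivation never invoked $u_1\neq u_2$), so $f_{12}$ is immediately in hand. Second, equation \eqref{eq:modified trace of f} expresses the weighted trace $u_1^2 f_{11}+u_2^2 f_{22}$ explicitly through $\Lc\vf$ and $\Lc^1\vf$. Thus, after exhausting the longitudinal data and its first moment, one knows $f_{12}$ and one linear combination of the diagonal entries.

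The remaining step is to extract a second, independent linear relation between $f_{11}$ and $f_{22}$ from $\Mc\vf$. Applying $-D_\vu D_\vv$ to $\Mc\vf$ and invoking identity \eqref{eq:DuDvMf} gives
\[
-D_\vu D_\vv \Mc\vf \;=\; 2u_1^2 u_2\,\partial_{x_1}(f_{11}-f_{22}) \;+\; 2u_2(u_1^2-u_2^2)\,\partial_{x_2} f_{12}.
\]
Since $f_{12}$ is by now known, the last term is known data, so we may solve for $\partial_{x_1}(f_{11}-f_{22})$; as $u_1 u_2\neq 0$ under the standing assumptions on $\vu$, we divide by $2u_1^2 u_2$ and integrate along the $\vev_1$-direction, the constant of integration being fixed to zero by the compact support of $\vf$. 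This recovers $f_{11}-f_{22}$ explicitly in terms of $\Mc\vf$ and the already-reconstructed $f_{12}$. (An equivalent route: subtract $(u_1^2-u_2^2)\,\Vc(f_{12})$ from $\Mc\vf$ using \eqref{eq:Mf} and invert the signed V-line transform $\Vc^-$ via \cite[Theorem~7]{amb-lat_2019}.)

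Finally I would solve the linear system $u_1^2 f_{11}+u_2^2 f_{22}=A$, $\;f_{11}-f_{22}=B$, whose right-hand sides $A,B$ are the quantities just produced. Its determinant is $u_1^2+u_2^2=1\neq 0$, so $f_{11}$ and $f_{22}$ are uniquely and explicitly determined, with no extra hypothesis. This also explains why the restriction $u_1\neq u_2$ of Theorem~\ref{th: full recovery L, L1, T} is absent here: there the companion relation was the ordinary trace $f_{11}+f_{22}$, yielding a system with determinant $u_1^2-u_2^2$ that degenerates when $u_1=u_2$, whereas replacing $\Tc\vf$ by $\Mc\vf$ furnishes the combination $f_{11}-f_{22}$ and the nonvanishing determinant $1$. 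I do not foresee a genuine analytic obstacle; the only care needed is the bookkeeping confirming that $f_{12}$, the weighted trace, and $f_{11}-f_{22}$ are each genuinely computable from $\Lc\vf$, $\Lc^1\vf$, $\Mc\vf$ only, together with checking that the integration producing $f_{11}-f_{22}$ is legitimate thanks to compact support.
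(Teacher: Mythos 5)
Your proposal is correct and follows essentially the same route as the paper: recover $f_{12}$ via \eqref{eq: f12 from moments}, obtain the weighted trace $u_1^2f_{11}+u_2^2f_{22}$ from \eqref{eq:modified trace of f}, extract $f_{11}-f_{22}$ from $\Mc\vf$, and solve the resulting nondegenerate linear system. Your main step of applying $-D_\vu D_\vv$ to $\Mc\vf$ and integrating along $\vev_1$ is just the signed V-line inversion written out explicitly, i.e.\ the ``equivalent route'' you mention parenthetically is precisely the paper's argument.
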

%-------------------------------------------------------
\begin{proof}
The relation \eqref{eq: f12 from moments} of the previous Theorem \ref{th: full recovery L, L1, T} gives a method to recover $f_{12}$ from $\Lc \vf$ and $\Lc^1 \vf$. Also, we have the following relation between $f_{11}$ and $f_{22}$ (from equation \eqref{eq:modified trace of f}):
    \begin{align*}
        \left(u_1^2f_{11}+u_2^2f_{22}\right)(\vx) &=  - \frac{1}{2} \left(D_\vu D_\vv\Lc^1\vf (\vx) +  (D_\vu + D_\vv)\Lc\vf (\vx)\right).
    \end{align*}
Recall from equation (\ref{eq:Mf}), $\Mc \vf (\vx) = -u_1u_2 \Vc^{-}(f_{11})(\vx) + (u_1^2 -u^2_2) \Vc(f_{12})(\vx) + u_1u_2 \Vc^{-}(f_{22})(\vx)$, and we already know $f_{12}$. Thus, the right-hand side of the equation below is known:
\begin{align*}
    u_1u_2\Vc^{-}\left(f_{11} - f_{22}\right)(\vx) =  (u_1^2 -u_2^2)\Vc(f_{12})(\vx) - \Mc \vf (\vx).
\end{align*}
This relation gives $f_{11} - f_{22}$ explicitly in terms of the known data, which we combine with the relation above for  $u_1^2f_{11}+u_2^2f_{22}$ to recover $f_{11}$ and $f_{22}$. 
\end{proof}
%---------
\begin{theorem}\label{th: full recovery T, T1, M}
Any tensor field $\vf\in C_c^2\left(S^2; D_1\right)$ can be recovered from $\Tc \vf$, $\Tc^1 \vf$, and $\Mc\vf$.
\end{theorem}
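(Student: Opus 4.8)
The plan is to transcribe the proof of Theorem~\ref{th: full recovery L, L1, M}, interchanging the roles of the longitudinal transform $\Lc$ and the transverse transform $\Tc$ (which amounts to swapping the coefficients $u_1^2$ and $u_2^2$ attached to the diagonal components $f_{11},f_{22}$). First I would record the transverse analogue of the $k=1$ relation \eqref{eq:L1 in terms of L}: carrying out the $k=1$ step in the proof of Lemma~\ref{lem: recurrence relation for L} with $\Xc^1_{\vu}\langle\vf,(\vu^\perp)^2\rangle$, $\Xc^1_{\vv}\langle\vf,(\vv^\perp)^2\rangle$ in place of $\Xc^1_{\vu}\langle\vf,\vu^2\rangle$, $\Xc^1_{\vv}\langle\vf,\vv^2\rangle$, and using $\langle\vf,(\vu^\perp)^2\rangle+\langle\vf,(\vv^\perp)^2\rangle=2\bigl(u_2^2 f_{11}+u_1^2 f_{22}\bigr)$, yields
$$-D_\vu D_\vv\,\Tc^1\vf = 2\bigl(u_2^2 f_{11}+u_1^2 f_{22}\bigr) + (D_\vu+D_\vv)\,\Tc\vf .$$
In particular the weighted trace $u_2^2 f_{11}+u_1^2 f_{22}$ is immediately expressed in terms of $\Tc\vf$ and $\Tc^1\vf$; this is the transverse counterpart of \eqref{eq:modified trace of f}.

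Next I would recover $f_{12}$ following \eqref{eq:sign}--\eqref{eq: f12 from moments} verbatim. Inserting the $\Vc$-inversion formula of \cite{amb-lat_2019} into the diagonal terms of the identity above and rewriting everything via \eqref{eq:Tf} in terms of $\Vc(f_{11})$, $\Vc^-(f_{12})$, $\Vc(f_{22})$ gives the transverse analogue of \eqref{eq: L1}. Applying the operator $\dfrac{D_\vu D_\vv\Xc_{\vev_2}}{u_2}+D_\vu+D_\vv$ to \eqref{eq:Tf} and adding, the $\Vc(f_{11})$ and $\Vc(f_{22})$ contributions cancel identically (the operator multiplying them in the recurrence coincides with the one being applied to $\Tc\vf$); using $D_\vu D_\vv\Vc^-(f_{12})=2u_1\partial_{x_1}f_{12}$, the right-hand side reduces to $-4u_1^2\,\partial_{x_1}\Xc_{\vev_2}f_{12}$, and undoing the two divergent-beam operators gives the closed form
$$f_{12}(\vx) = -\frac{1}{4u_1^2}\,\partial_{x_2}\Xc_{\vev_1}\!\left[\left(\frac{D_\vu D_\vv\Xc_{\vev_2}}{u_2}+D_\vu+D_\vv\right)\Tc\vf(\vx) + D_\vu D_\vv\,\Tc^1\vf(\vx)\right].$$
This is legitimate because the standing assumption $-1<u_1^2-u_2^2<1$ forces $u_1\neq 0$ and $u_2\neq 0$.

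Finally, with $f_{12}$ known, identity \eqref{eq:Mf} reads $\Mc\vf=u_1u_2\,\Vc^-(f_{22}-f_{11})+(u_1^2-u_2^2)\,\Vc(f_{12})$, so $\Vc^-(f_{22}-f_{11})$ is expressed through the data; inverting the signed V-line transform via \cite{amb-lat_2019} recovers $f_{22}-f_{11}$. Combined with the already recovered quantity $u_2^2 f_{11}+u_1^2 f_{22}$, this yields a $2\times 2$ linear system for $(f_{11},f_{22})$ whose determinant equals $u_1^2+u_2^2=1$, hence $f_{11},f_{22}$ are uniquely determined; note that, unlike Theorem~\ref{th: full recovery L, L1, T}, no hypothesis $u_1\neq u_2$ is required. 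The argument is a routine mirror image of the longitudinal case: the only points needing care are the exact cancellation of the diagonal terms in the middle step and checking that the scalar factors ($u_1$, $u_2$, $u_1u_2$) occurring in denominators are nonzero, so I do not expect a genuine obstacle beyond careful bookkeeping.
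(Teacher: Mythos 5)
Your proposal is correct and follows essentially the same route as the paper, which simply states that the proof is ``similar to the longitudinal case'' of Theorem~\ref{th: full recovery L, L1, M}; your transcription (the transverse recurrence giving $u_2^2f_{11}+u_1^2f_{22}$, the sign-flipped formula for $f_{12}$, and recovery of $f_{22}-f_{11}$ from $\Mc\vf$) is exactly that mirror argument, with the correct observation that the final linear system has determinant $u_1^2+u_2^2=1$, so no hypothesis $u_1\ne u_2$ is needed.
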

\begin{proof}
Similar to the longitudinal case above. 
\end{proof}
%---------------------------------
%%%%%%%%%%%%%%%%%%%%%%%%%%%%%%%%%%%%%%%%%%%%%%%%%%%%%%%%

\section{Full recovery of a tensor field from its star transform}\label{sec:Star}
In this section, we study the star transform for symmetric 2-tensor fields and derive its inversion formula. Throughout the section, we identify a symmetric 2-tensor $\vg = \left(g_{ij}\right)$ with the vector $\vg = \left(g_{11}, g_{12},  g_{22}\right)\in\Rb^3$. In particular, we make no distinction between a symmetric 2-tensor field $\vf(\vx) = (f_{ij}(\vx))$ and the corresponding vector field $\vf = (f_{11}(\vx), f_{12}(\vx),  f_{22}(\vx))$. Similarly, the tensors $\vu^2$ and $\vu \odot \vv$ are identified with the vectors $\left(u_1^2, u_1 u_2, u_2^2\right)$ and $\left(u_1v_1, \frac{1}{2}(u_1 v_2 + u_2 v_1), u_2v_2\right)$. With these adaptations, we define the star transform of a symmetric 2-tensor field as follows. 
%---------------------------------------
\begin{defn}
Let $\vf\in C_c^2\left(S^2;\Rb^2\right)$,
and let $\vgamma_{1},\vgamma_{2},...,\vgamma_{m}$ be distinct unit vectors in $\mathbb{R}^{2}$. The \textbf{star transform} of $\vf$ is defined as
\begin{align}
\Sc\vf &=  \sum_{i=1}^{m} c_{i} {\Xc}_{{\vgamma}_{i}} 
\begin{bmatrix}
 \vf\cdot {{\vgamma}_{i}}^{2}\\
 \vf\cdot {{\vgamma}_{i} \odot {\vgamma}_{i}^{\perp} }\\
 \vf\cdot ({\vgamma}_{i}^{\perp})^{2}
\end{bmatrix} ,
\end{align} 
where $c_{1}, c_{2},\dots , c_{m}$ are non-zero constants in $\mathbb{R}$.
\end{defn}
 %---------------------------------------------------
 Note that the star transform data contains longitudinal, transverse, and mixed components. 
 For a function $h\in C^2_c(\Rb^2)$ we denote by ${\Rc} h(\vxi,s)$ the Radon transform of $h$, i.e. the integral of $h$ along the line $l=\{\vx\in \Rb^2: \vx\cdot\vxi=s\},$ where $\vxi$ is the unit normal vector to the line, and $s$ is the signed distance from the origin. From the Lemmas 1 and 2 in \cite{Amb_Lat_star} we have the following identity:
 \begin{equation}\label{eq: radon identity}
  \frac{d}{ds} \Rc ({\Xc}_{{\vgamma}_{i}} h) (\vxi,s)= \frac{-1}{\vxi \cdot {\vgamma}_{i}} \Rc h (\vxi,s).
  \end{equation}

%------------------------------------------------
\begin{defn}
Consider the  star transform $\Sc \vf$  of a symmetric 2-tensor field $\vf$ with branches along directions $\vgamma_{1},\vgamma_{2},...,\vgamma_{m}$. We call
$$ \Zc_{1}= \cup_{i=1}^{m}\{ \vxi : \vxi \cdot {\vgamma}_{i} =0\} $$ the set of singular directions of type 1 for $\Sc$.
\end{defn}
%-----------------------------------------------
Now let us define three vectors in $\Rb^3$ which will be important for further calculations. For $\vxi \in \mathbb{S}^{1}\setminus \Zc_{1}$, we define 
\begin{align}
\vgamma(\vxi) = - \sum_{i=1}^{m} \frac{c_{i}{\vgamma}_{i}^{2}}{\vxi \cdot {\vgamma}_{i}},\qquad
\vgamma^\dag(\vxi) = - \sum_{i=1}^{m} \frac{c_{i}{\vgamma}_{i}\odot \vgamma_i^\perp}{\vxi \cdot {\vgamma}_{i}},\qquad
\vgamma^\perp(\vxi) = - \sum_{i=1}^{m} \frac{c_{i}({\vgamma_i}^\perp)^2 }{\vxi \cdot {\vgamma}_{i}}\label{eq: gamma_xi perp}.
\end{align}
%-----------------------------------------------
\begin{defn}
We call $$\Zc_{2} = \left\{ \vxi :  \vgamma^\dag(\vxi) = 0\right\}\ \bigcup \ \left\{ \vxi :\sum_{i=1}^m\frac{c_i}{\vxi \cdot \vgamma_i} = 0 \right\}$$ 
the set of singular directions of type 2 for $\Sc$.
\end{defn}
%--------------------------------------------------

\begin{lem}\label{lem:zero sets of vgamma}
    The zero sets of $\vgamma(\vxi)$, $\vgamma^\dag(\vxi)$, and $\vgamma^\perp(\vxi)$ satisfy the following relations: 
    $$
    \{ \vxi :  \vgamma(\vxi) = 0 \} = \{ \vxi :  \vgamma^\perp(\vxi) = 0 \} \subset  \{ \vxi :  \vgamma^\dag(\vxi) = 0 \}.
    $$ 
\end{lem}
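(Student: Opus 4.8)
The plan is to compute all three vectors $\vgamma(\vxi)$, $\vgamma^\dag(\vxi)$, $\vgamma^\perp(\vxi)$ explicitly in coordinates and exploit the algebraic relations between the symmetrized tensor products $\vgamma_i^2$, $\vgamma_i\odot\vgamma_i^\perp$, and $(\vgamma_i^\perp)^2$. Writing $\vgamma_i=(\cos\theta_i,\sin\theta_i)$, the three tensors (identified with vectors in $\Rb^3$ as in Section \ref{sec:Star}) are
\begin{align*}
\vgamma_i^2 &= \left(\cos^2\theta_i,\ \cos\theta_i\sin\theta_i,\ \sin^2\theta_i\right),\\
\vgamma_i\odot\vgamma_i^\perp &= \left(-\cos\theta_i\sin\theta_i,\ \tfrac12(\cos^2\theta_i-\sin^2\theta_i),\ \cos\theta_i\sin\theta_i\right),\\
(\vgamma_i^\perp)^2 &= \left(\sin^2\theta_i,\ -\cos\theta_i\sin\theta_i,\ \cos^2\theta_i\right).
\end{align*}
The key observation is that $\vgamma_i^2 + (\vgamma_i^\perp)^2 = (1,0,1)$ is independent of $i$, and that the remaining information in each tensor is carried by a single ``phase'' combination. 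Concretely, the first and third components of $\vgamma_i^2-(\vgamma_i^\perp)^2$ are $\pm\cos2\theta_i$ and the middle component of $\vgamma_i\odot\vgamma_i^\perp$ is $\tfrac12\cos2\theta_i$, while the off-entries involve $\sin2\theta_i$. Introducing the scalar $\sigma(\vxi) := \sum_i \frac{c_i}{\vxi\cdot\vgamma_i}$ and the complex quantity $\tau(\vxi) := \sum_i \frac{c_i e^{2\I\theta_i}}{\vxi\cdot\vgamma_i}$, one gets
$$
\vgamma(\vxi) = -\tfrac12\big(\sigma + \mathrm{Re}\,\tau,\ \mathrm{Im}\,\tau,\ \sigma - \mathrm{Re}\,\tau\big),\quad
\vgamma^\perp(\vxi) = -\tfrac12\big(\sigma - \mathrm{Re}\,\tau,\ -\mathrm{Im}\,\tau,\ \sigma + \mathrm{Re}\,\tau\big),
$$
and $\vgamma^\dag(\vxi) = -\tfrac12(-\mathrm{Im}\,\tau,\ \mathrm{Re}\,\tau,\ \mathrm{Im}\,\tau)$ (up to a harmless sign convention to be fixed by direct computation).

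From these explicit forms the claimed relations follow immediately. First, $\vgamma(\vxi)=0$ forces $\sigma+\mathrm{Re}\,\tau=0$, $\mathrm{Im}\,\tau=0$, and $\sigma-\mathrm{Re}\,\tau=0$, hence $\sigma=0$ and $\tau=0$; the same three scalar equations are exactly what $\vgamma^\perp(\vxi)=0$ produces (the components are permuted and one sign flips, but the vanishing set is the same), giving the equality $\{\vgamma(\vxi)=0\}=\{\vgamma^\perp(\vxi)=0\}=\{\sigma=0,\ \tau=0\}$. Second, $\vgamma^\dag(\vxi)=0$ forces only $\mathrm{Re}\,\tau=0$ and $\mathrm{Im}\,\tau=0$, i.e. $\tau=0$, which is a strictly weaker condition (it does not see $\sigma$); hence $\{\vgamma(\vxi)=0\}\subset\{\vgamma^\dag(\vxi)=0\}$, and the inclusion is in general proper, matching the two separate pieces appearing in the definition of $\Zc_2$.

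The only real work is the bookkeeping in the coordinate computation — correctly translating the $\Rb^3$ identification of the symmetrized tensor product $\vgamma_i\odot\vgamma_i^\perp$ (with the factor $\tfrac12$ in the middle slot, per the convention fixed just before the definition of $\Sc$) into the trigonometric expressions above, and keeping track of signs. I expect this to be the main, though routine, obstacle: once the three vectors are written in terms of $\sigma$ and $\tau$, the set-theoretic conclusions are a one-line consequence of the fact that $\vgamma$ and $\vgamma^\perp$ each encode the pair $(\sigma,\tau)$ faithfully while $\vgamma^\dag$ encodes only $\tau$. An alternative, coordinate-free route — noting that $\vgamma(\vxi)+\vgamma^\perp(\vxi) = -\sigma(\vxi)\,(1,0,1)$ and that $\vgamma^\dag(\vxi)$ is obtained from $\vgamma(\vxi)-\vgamma^\perp(\vxi)$ by the fixed linear ``rotation by $\pi/2$'' map on tensors — would also work and might be cleaner to present, but the computational version is the most direct.
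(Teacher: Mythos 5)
Your proof is correct, and at bottom it is the same direct componentwise computation as the paper's: the paper clears denominators, writes $\vgamma(\vxi)=-\Pc(\vxi)/\kappa(\vxi)$ and similarly for the other two vectors, and records the identities $\Pc_{11}=\Pc^\perp_{22}$, $\Pc_{12}=-\Pc^\perp_{12}$, $\Pc_{22}=\Pc^\perp_{11}$, together with $\Pc^\dag_{11}=-\Pc_{12}$, $\Pc^\dag_{12}=\tfrac12(\Pc_{11}-\Pc_{22})$, $\Pc^\dag_{22}=\Pc_{12}$ --- which is exactly your observation that $\vgamma$ and $\vgamma^\perp$ faithfully encode the pair $(\sigma,\tau)$ (trace part plus double-angle part) while $\vgamma^\dag$ encodes only $\tau$. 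Your $\sigma$--$\tau$ (complex double-angle) packaging is a clean equivalent, and your explicit formulas check out, including the $\tfrac12$ in the middle slot of $\vgamma_i\odot\vgamma_i^\perp$; the paper's polynomial-numerator form has the side benefit that the same component identities are reused verbatim in the next lemma (the determinant of $\Qc(\vxi)$, where $\Pc_{11}+\Pc_{22}=\sum_i\kappa_i c_i$ plays the role of your $\sigma$) and in the B\'ezout argument bounding the cardinality of $\Zc_2$, so if you adopt your formulation you would want to translate it back into homogeneous polynomials at that stage. One minor point: you assert the inclusion is ``in general proper''; the lemma only claims containment, so nothing essential is missing, but note that $\tau=0$ being a formally weaker condition than $\sigma=\tau=0$ does not by itself produce a point where the two zero sets differ --- the paper closes this with the explicit example $\vgamma_1=(1,0)$, $\vgamma_2=(0,1)$, $c_1=c_2=1$, $\vxi=(1/\sqrt{2},1/\sqrt{2})$, for which $\vgamma^\dag(\vxi)=0$ while $\vgamma(\vxi)\neq 0$.
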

%------------------------------------------------
\begin{proof}
First, let us rewrite $\vgamma(\vxi)$, $\vgamma^\dag(\vxi)$, and $\vgamma^\perp(\vxi)$ with a common denominator as follows:
\begin{align*}
\vgamma(\vxi) &= - \sum_{i=1}^{m} \frac{c_{i}{\vgamma}_{i}^{2}}{\vxi \cdot {\vgamma}_{i}}  = -\sum_{i=1}^{m}\frac{\left(\prod_{j\ne i}\vxi \cdot {\vgamma}_{j}\right)c_{i}\vgamma_{i}^2}{\prod_{j=1}^{m} \vxi \cdot {\vgamma}_{j}} 
= -\frac{\Pc(\vxi)}{\kappa(\vxi)} = -\frac{\left(\Pc_{11}(\vxi),\Pc_{12}(\vxi),\Pc_{22}(\vxi)\right)}{\kappa(\vxi)},\\
\vgamma^\dag(\vxi) &= - \sum_{i=1}^{m} \frac{c_{i}{\vgamma}_{i}\odot \vgamma_i^\perp}{\vxi \cdot {\vgamma}_{i}} = -\sum_{i=1}^{m}\frac{\left(\prod_{j\ne i}\vxi \cdot {\vgamma}_{j}\right)c_{i}\vgamma_{i}\odot \vgamma_i^\perp}{\prod_{j=1}^{m} \vxi \cdot {\vgamma}_{j}} 
= -\frac{\Pc^\dag(\vxi)}{\kappa(\vxi)} = -\frac{\left(\Pc^\dag_{11}(\vxi),\Pc^\dag_{12}(\vxi),\Pc^\dag_{22}(\vxi)\right)}{\kappa(\vxi)},\\
\vgamma^\perp(\vxi) &= - \sum_{i=1}^{m} \frac{c_{i}({\vgamma_i}^\perp)^2 }{\vxi \cdot {\vgamma}_{i}} = -\sum_{i=1}^{m}\frac{\left(\prod_{j\ne i}\vxi \cdot {\vgamma}_{j}\right)c_{i} (\vgamma_i^\perp)^2}{\prod_{j=1}^{m} \vxi \cdot {\vgamma}_{j}} 
= -\frac{\Pc^\perp(\vxi)}{\kappa(\vxi)}= -\frac{\left(\Pc^\perp_{11}(\vxi),\Pc^\perp_{12}(\vxi),\Pc^\perp_{22}(\vxi)\right)}{\kappa(\vxi)},
\end{align*}
where $\displaystyle \kappa(\vxi) = \prod^m_{j=1}\vxi \cdot \vgamma_{j}$, and $\Pc(\vxi)$, $\Pc^\dag(\vxi)$, $\Pc^\perp(\vxi)$ are the numerators of the corresponding fractions. 

\noindent For $i  = 1, \dots , m$, take $\vgamma_i = \left(\gamma_{i1}, \gamma_{i2}\right)$ and  $\vgamma_i^\perp = \left(-\gamma_{i2}, \gamma_{i1}\right)$ to get the following expressions for the components of $\Pc(\vxi)$, $\Pc^\dag(\vxi)$, and $\Pc^\perp(\vxi)$:
\begin{align*}
\Pc_{11}(\vxi) &= \sum_{i=1}^m \kappa_i(\vxi)  c_i \gamma_{i1}^2, \quad    \Pc_{12}(\vxi) = \sum_{i=1}^m \kappa_i(\vxi)  c_i\gamma_{i1}\gamma_{i2}, \quad \mbox{and} \quad \Pc_{22}(\vxi) = \sum_{i=1}^m \kappa_i(\vxi)  c_i \gamma_{i2}^2,\\
\Pc^\dag_{11}(\vxi) &= -\sum_{i=1}^m \kappa_i(\vxi)  c_i\gamma_{i1}\gamma_{i2}, \quad    \Pc^\dag_{12}(\vxi) = \sum_{i=1}^m \kappa_i(\vxi)  c_i \frac{\left(\gamma_{i1}^2 - \gamma_{i2}^2\right)}{2}, \quad \mbox{and} \quad \Pc^\dag_{22}(\vxi) = \sum_{i=1}^m \kappa_i(\vxi)  c_i\gamma_{i1}\gamma_{i2}, \\
\Pc_{11}^\perp(\vxi) &= \sum_{i=1}^m \kappa_i(\vxi)  c_i \gamma_{i2}^2, \quad    \Pc^\perp_{12}(\vxi) = - \sum_{i=1}^m \kappa_i(\vxi)  c_i\gamma_{i1}\gamma_{i2}, \quad \mbox{and} \quad \Pc^\perp_{22}(\vxi) = \sum_{i=1}^m \kappa_i(\vxi)  c_i \gamma_{i1}^2,
\end{align*}
where $\displaystyle \kappa_i(\vxi) = \prod_{j\ne i}\vxi \cdot {\vgamma}_{j}$. The above expressions imply
\begin{align}
  \Pc_{11}(\vxi) &= \Pc^\perp_{22}(\vxi), \quad  \quad  \Pc_{12}(\vxi) = -\Pc^\perp_{12}(\vxi), \  \mbox{ and }\   \Pc_{22}(\vxi) = \Pc^\perp_{11}(\vxi), \label{eq:relation between Pii and Pii perp}\\
    \Pc_{11}^\dag(\vxi) &= - \Pc_{12}(\vxi), \quad   \Pc^\dag_{12}(\vxi) = \frac{1}{2} \left[\Pc_{11}(\vxi) - \Pc_{22}(\vxi)\right], \ \mbox{ and } \  \Pc^\dag_{22}(\vxi) = \Pc_{12}(\vxi). \label{eq:relation between Pii and Pii dag}
\end{align}
From these relations, we have $\displaystyle \vgamma(\vxi) =-\frac{1}{\kappa(\vxi)}\left(\Pc_{11}(\vxi),\Pc_{12}(\vxi),\Pc_{22}(\vxi)\right) = 0$ if and only if $\displaystyle \vgamma^\perp(\vxi) =  -\frac{1}{\kappa(\vxi)}\left(\Pc_{22}(\vxi),-\Pc_{12}(\vxi),\Pc_{11}(\vxi)\right) = 0$, which implies the following two sets are equal:
$$\{ \vxi :  \vgamma(\vxi) = 0 \} = \{ \vxi :  \vgamma^\perp(\vxi) = 0 \}.
    $$ 
This proves the equality part of the lemma. To obtain the inclusion relation and complete the proof, let us take $\vxi \in \Sb^1$ such that $\vgamma(\vxi) = 0$, that is, $\left(\Pc_{11}(\vxi),\Pc_{12}(\vxi),\Pc_{22}(\vxi)\right) = (0,0,0)$. Then, equation \eqref{eq:relation between Pii and Pii dag} implies that $\vgamma^\dag(\vxi) =0 $, which yields the required containment $$\{ \vxi :  \vgamma(\vxi) = 0 \}   \subset \{ \vxi :  \vgamma^\dag(\vxi) = 0 \}.$$ 
An inclusion relation in the opposite direction in the statement above is not true. For example, if $\vgamma_1 = (1, 0)$, $\vgamma_2 = (0, 1)$, $ \displaystyle \vxi = \left(\frac{1}{\sqrt{2}}, \frac{1}{\sqrt{2}}\right)$, and $c_1 = c_2 = 1$, then one can verify that
$\vgamma^\dag (\vxi) = (0,0,0)$ and $\vgamma (\vxi) = -\sqrt{2}(1, 0, 1).$
\end{proof}
%%%%%%%%%%%%%%%%%%%%%%%%%%%%%%%%%%%%%%%%%%%%%%%%%%%%%%% 

\begin{lem}
For $\vxi \notin \Zc_2$, the vectors $\vgamma(\vxi)$, $\vgamma^\dag(\vxi)$, and $\vgamma^\perp(\vxi)$ are linearly independent in $\Rb^3$, therefore the following matrix is invertible:
    \begin{align*}
       \Qc(\vxi) =  \begin{bmatrix}
             {\vgamma}(\vxi)\\
 {\vgamma}^{\dag}(\vxi) \\
 {\vgamma}^{\perp}(\vxi)
        \end{bmatrix}.
    \end{align*}
\end{lem}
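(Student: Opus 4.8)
The plan is to reduce the claim to the non-vanishing of a single $3\times3$ determinant that factors transparently into the two pieces defining $\Zc_2$. First I would reuse the common-denominator representation from the proof of Lemma~\ref{lem:zero sets of vgamma}. Setting $\kappa(\vxi)=\prod_{j=1}^m\vxi\cdot\vgamma_j$ and suppressing the argument $\vxi$ on the polynomials $\Pc_{\bullet}$, the relations \eqref{eq:relation between Pii and Pii perp}--\eqref{eq:relation between Pii and Pii dag} give
\[
\vgamma = -\tfrac{1}{\kappa}\bigl(\Pc_{11},\Pc_{12},\Pc_{22}\bigr),\quad
\vgamma^\dag = -\tfrac{1}{\kappa}\bigl(-\Pc_{12},\tfrac12(\Pc_{11}-\Pc_{22}),\Pc_{12}\bigr),\quad
\vgamma^\perp = -\tfrac{1}{\kappa}\bigl(\Pc_{22},-\Pc_{12},\Pc_{11}\bigr).
\]
Since $\vgamma,\vgamma^\dag,\vgamma^\perp$ are only defined for $\vxi\notin\Zc_1$, the scalar $\kappa(\vxi)$ is nonzero, so $\Qc(\vxi) = -\kappa(\vxi)^{-1}M(\vxi)$, where $M(\vxi)$ is the matrix with rows $(\Pc_{11},\Pc_{12},\Pc_{22})$, $(-\Pc_{12},\tfrac12(\Pc_{11}-\Pc_{22}),\Pc_{12})$, $(\Pc_{22},-\Pc_{12},\Pc_{11})$. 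Thus $\Qc(\vxi)$ is invertible if and only if $\det M(\vxi)\ne0$.

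Next I would evaluate $\det M(\vxi)$ by cofactor expansion. Writing $a=\Pc_{11}$, $b=\Pc_{12}$, $c=\Pc_{22}$, the computation collapses to
\[
\det M(\vxi) = \tfrac12\,(a+c)\bigl[(a-c)^2+4b^2\bigr] = \tfrac12\bigl(\Pc_{11}+\Pc_{22}\bigr)\Bigl[\bigl(\Pc_{11}-\Pc_{22}\bigr)^2+4\Pc_{12}^2\Bigr].
\]
Then I would match each factor to a component of $\Zc_2$. The bracketed factor vanishes exactly when $\Pc_{11}=\Pc_{22}$ and $\Pc_{12}=0$, which by \eqref{eq:relation between Pii and Pii dag} is equivalent to $\Pc^\dag_{11}=\Pc^\dag_{12}=\Pc^\dag_{22}=0$, i.e.\ to $\vgamma^\dag(\vxi)=0$. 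For the first factor, using $|\vgamma_i|=1$ and $\kappa_i(\vxi)=\kappa(\vxi)/(\vxi\cdot\vgamma_i)$,
\[
\Pc_{11}(\vxi)+\Pc_{22}(\vxi) = \sum_{i=1}^m\kappa_i(\vxi)\,c_i\,(\gamma_{i1}^2+\gamma_{i2}^2) = \sum_{i=1}^m\kappa_i(\vxi)\,c_i = \kappa(\vxi)\sum_{i=1}^m\frac{c_i}{\vxi\cdot\vgamma_i},
\]
so, as $\kappa(\vxi)\ne0$, this vanishes iff $\sum_{i=1}^m c_i/(\vxi\cdot\vgamma_i)=0$. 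Hence $\det M(\vxi)=0$ precisely when $\vxi\in\Zc_2$.

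Therefore, for $\vxi\notin\Zc_2$ we obtain $\det M(\vxi)\ne0$, hence $\det\Qc(\vxi)=-\kappa(\vxi)^{-3}\det M(\vxi)\ne0$, so $\Qc(\vxi)$ is invertible and its rows $\vgamma(\vxi),\vgamma^\dag(\vxi),\vgamma^\perp(\vxi)$ are linearly independent in $\Rb^3$. The genuinely computational step is the determinant evaluation, but it is short and the factorization is clean; the only point requiring care is the bookkeeping that identifies the two irreducible factors of $\det M$ with the two sets whose union is $\Zc_2$, together with the observation that $\kappa(\vxi)\ne0$ already follows from $\vxi\notin\Zc_1$.
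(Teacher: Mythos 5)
Your argument is correct and follows essentially the same route as the paper: factoring out $-1/\kappa(\vxi)$, computing the $3\times 3$ determinant as $\tfrac12\bigl(\Pc_{11}+\Pc_{22}\bigr)\bigl[(\Pc_{11}-\Pc_{22})^2+4\Pc_{12}^2\bigr]$, and identifying the two factors with the two sets whose union is $\Zc_2$ via the identity $\Pc_{11}+\Pc_{22}=\sum_i \kappa_i c_i$. Your explicit remark that the bracketed factor vanishes exactly when $\vgamma^\dag(\vxi)=0$ (using $\kappa(\vxi)\neq 0$ for $\vxi\notin\Zc_1$) just makes precise a step the paper leaves implicit.
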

%--------------------------------------------------
\begin{proof}
We prove $\Qc(\vxi)$ is invertible by showing that $\operatorname{det} \Qc\ne0$ for $\vxi \notin \Zc_2$. By \eqref{eq:relation between Pii and Pii perp} and \eqref{eq:relation between Pii and Pii dag}
\begin{align*}
\Qc(\vxi) =  \begin{bmatrix}
{\vgamma}(\vxi)\\
 {\vgamma}^{\dag}(\vxi) \\
 {\vgamma}^{\perp}(\vxi)
        \end{bmatrix} &= -\frac{1}{\kappa (\vxi)}\begin{bmatrix}
\Pc_{11}(\vxi) & \Pc_{12}(\vxi) & \Pc_{22}(\vxi)\\
\Pc^\dag_{11}(\vxi) & \Pc^\dag_{12}(\vxi) & \Pc^\dag_{22}(\vxi)\\
\Pc^\perp_{11}(\vxi) & \Pc^\perp_{12}(\vxi) & \Pc^\perp_{22}(\vxi)
        \end{bmatrix}  \\
        &= -\frac{1}{\kappa(\vxi)}\begin{bmatrix}
\Pc_{11}(\vxi) & \Pc_{12}(\vxi) & \Pc_{22}(\vxi)\\
-\Pc_{12}(\vxi) & \frac{1}{2}\left( \Pc_{11} - \Pc_{22}\right)(\vxi) & \Pc_{12}(\vxi)\\
\Pc_{22}(\vxi) & -\Pc_{12}(\vxi) & \Pc_{11}(\vxi)
        \end{bmatrix}. 
\end{align*}
Therefore, 
\begin{align*} \det\Qc(\vxi)\ &= -\frac{1}{2\kappa^3(\vxi)}\left\{4\Pc^2_{12}(\vxi) + \left[\Pc_{11}(\vxi) -\Pc_{22}(\vxi)\right]^2\right\}\left[\Pc_{11}(\vxi) + \Pc_{22}(\vxi)\right]\\
&= -\frac{1}{2\kappa^3(\vxi)}\left\{4\Pc^2_{12}(\vxi) + \left[\Pc_{11}(\vxi) -\Pc_{22}(\vxi)\right]^2\right\}\sum_{i=1}^m  \kappa_i(\vxi) c_i,
\end{align*}
where in the last line, we used the identity $\displaystyle \Pc_{11}(\vxi) + \Pc_{22}(\vxi) =  \sum_{i=1}^m  \kappa_i(\vxi) c_i$. 

For $\vxi \notin \Zc_2$, both terms $\displaystyle 4\Pc^2_{12}(\vxi) + \left[\Pc_{11}(\vxi) -\Pc_{22}(\vxi)\right]^2$ and $\displaystyle \sum_{i=1}^m  \kappa_i(\vxi) c_i$, appearing in the right-hand side of the above equation, are non-zero. Therefore $\displaystyle     \det\Qc(\vxi)\neq 0$.
\end{proof}

%%%%%%%%%%%%%%%%%%%%%%%%%%%%%%%%%%%%%%%%%%%%%%%%%%%%%%%  

%----------------------------------------------- 
\begin{theorem}\label{th:inversion of star transform S}
 Let $\vf\in C_c^2\left(S^2;D_1\right)$, and $\vgamma_{1},\vgamma_{2}, \dots ,\vgamma_{m}$ be the branch directions of the star transform. Then for any $\vxi \in \Sb^{1}\setminus(\Zc_{1}\cup\Zc_{2}) $ and any $ s\in \Rb$ we have 
\begin{equation} \label{eq: star inversion}
\left[\Qc(\vxi)\right]^{-1}\frac{d}{ds} \Rc ({\Sc}\vf) (\vxi,s)=  \Rc \vf (\vxi,s)
    \end{equation}
where $\Rc \vf$ is the component-wise Radon transform of $\vf$.
\end{theorem}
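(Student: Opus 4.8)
The plan is to reduce the matrix identity \eqref{eq: star inversion} to the scalar Radon-derivative relation \eqref{eq: radon identity} applied component-wise. First I would take the Radon transform in the spatial variable of each of the three scalar components of $\Sc\vf$, namely $\Rc(\Xc_{\vgamma_i}(\vf\cdot\vgamma_i^2))$, $\Rc(\Xc_{\vgamma_i}(\vf\cdot\vgamma_i\odot\vgamma_i^\perp))$, and $\Rc(\Xc_{\vgamma_i}(\vf\cdot(\vgamma_i^\perp)^2))$, then differentiate in $s$. By \eqref{eq: radon identity}, each term $\tfrac{d}{ds}\Rc(\Xc_{\vgamma_i}h)(\vxi,s)$ collapses to $-\tfrac{1}{\vxi\cdot\vgamma_i}\Rc h(\vxi,s)$, which is legitimate precisely because $\vxi\notin\Zc_1$, so that $\vxi\cdot\vgamma_i\ne0$ for every $i$. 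Applying this to $h=\vf\cdot\vgamma_i^2$ (a fixed scalar function once $\vf$ and $\vgamma_i$ are given) and summing against the weights $c_i$ yields
\begin{align*}
\frac{d}{ds}\Rc\!\left(\sum_{i=1}^m c_i\Xc_{\vgamma_i}(\vf\cdot\vgamma_i^2)\right)(\vxi,s)
&= -\sum_{i=1}^m \frac{c_i}{\vxi\cdot\vgamma_i}\,\Rc(\vf\cdot\vgamma_i^2)(\vxi,s)
= \Rc\!\left(\vf\cdot\sum_{i=1}^m\frac{-c_i\vgamma_i^2}{\vxi\cdot\vgamma_i}\right)(\vxi,s),
\end{align*}
where in the last step I use linearity of the Radon transform in the integrand together with the fact that each $\vf\cdot\vgamma_i^2$ is, in the $\Rb^3$-identification, the Euclidean dot product of the vector $\vf=(f_{11},f_{12},f_{22})$ with the constant vector $\vgamma_i^2=(\gamma_{i1}^2,\gamma_{i1}\gamma_{i2},\gamma_{i2}^2)$; pulling the constant vector $\sum_i(-c_i/\vxi\cdot\vgamma_i)\vgamma_i^2=\vgamma(\vxi)$ (by definition \eqref{eq: gamma_xi perp}) out of the Radon transform gives $\vgamma(\vxi)\cdot\Rc\vf(\vxi,s)$, the first entry of $\Qc(\vxi)\Rc\vf(\vxi,s)$.

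Next I would repeat the identical computation for the mixed component, where the constant vector associated to the integrand $\vf\cdot\vgamma_i\odot\vgamma_i^\perp$ is $\bigl(-\gamma_{i1}\gamma_{i2},\tfrac12(\gamma_{i1}^2-\gamma_{i2}^2),\gamma_{i1}\gamma_{i2}\bigr)$, so the weighted sum produces $\vgamma^\dag(\vxi)\cdot\Rc\vf(\vxi,s)$, and for the transverse component, where the constant vector is $(\gamma_{i2}^2,-\gamma_{i1}\gamma_{i2},\gamma_{i1}^2)$, producing $\vgamma^\perp(\vxi)\cdot\Rc\vf(\vxi,s)$. Stacking these three scalar identities gives
$$\frac{d}{ds}\Rc(\Sc\vf)(\vxi,s)=\begin{bmatrix}\vgamma(\vxi)\\ \vgamma^\dag(\vxi)\\ \vgamma^\perp(\vxi)\end{bmatrix}\Rc\vf(\vxi,s)=\Qc(\vxi)\,\Rc\vf(\vxi,s).$$
Finally, since $\vxi\notin\Zc_2$, the previous lemma guarantees $\Qc(\vxi)$ is invertible, so left-multiplying by $[\Qc(\vxi)]^{-1}$ yields \eqref{eq: star inversion}. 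I would also remark that $\Rc\vf$ is well-defined and the differentiation under the Radon transform is legitimate because $\vf\in C_c^2(S^2;D_1)$ has compactly supported $C^2$ components, so $\Xc_{\vgamma_i}$ of each component is a well-behaved function to which \eqref{eq: radon identity} (i.e. Lemmas 1 and 2 of \cite{Amb_Lat_star}) applies.

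I do not anticipate a genuine obstacle here: the entire argument is a bookkeeping exercise matching the three scalar specializations of \eqref{eq: radon identity} to the three rows of $\Qc(\vxi)$, with the two exclusion sets $\Zc_1$ and $\Zc_2$ inserted exactly where they are needed (the former to divide by $\vxi\cdot\vgamma_i$, the latter to invert $\Qc$). The only point requiring mild care is the identification of the constant vectors attached to $\vgamma_i^2$, $\vgamma_i\odot\vgamma_i^\perp$, and $(\vgamma_i^\perp)^2$ under the $\Rb^3$-convention fixed at the start of the section — in particular the factor $\tfrac12$ in the middle entry of the symmetrized product — so that the summed weights coincide exactly with the definitions in \eqref{eq: gamma_xi perp} rather than differing by a normalization. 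Once that is pinned down, interchanging the finite sum, the Radon transform, and the $s$-derivative is routine for compactly supported $C^2$ data.
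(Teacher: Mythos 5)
Your proposal is correct and follows essentially the same route as the paper's proof: apply \eqref{eq: radon identity} termwise to the three components of $\Sc\vf$, use linearity of $\Rc$ and the $\Rb^3$-identification to pull out the constant vectors $\vgamma(\vxi)$, $\vgamma^\dag(\vxi)$, $\vgamma^\perp(\vxi)$, obtain $\frac{d}{ds}\Rc(\Sc\vf)=\Qc(\vxi)\Rc\vf$, and invert $\Qc(\vxi)$ off $\Zc_1\cup\Zc_2$. Your added attention to the factor $\tfrac12$ in the middle entry of the symmetrized product and to where each exclusion set is used is consistent with the paper's conventions and requires no changes.
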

%-----------------------------------------------
 \begin{proof}
     Using \eqref{eq: radon identity} we have 
     \begin{equation}
      \frac{d}{ds} \Rc ({\Sc}\vf) =  \sum_{i=1}^{m} c_{i} \frac{d}{ds}{\Rc}{\Xc}_{{\vgamma}_{i}} 
\begin{bmatrix}
 \vf\cdot {{\vgamma}_{i}}^{2}\\
 \vf\cdot {{\vgamma}_{i}\odot \vgamma_i^\perp }\\
 \vf\cdot ({\vgamma}_{i}^{\perp})^{2}
\end{bmatrix}  = - \sum_{i=1}^{m} \frac{c_{i}}{\vxi \cdot \vgamma_{i}}{\Rc}\begin{bmatrix}
 \vf\cdot {{\vgamma}_{i}}^{2}\\
 \vf\cdot {{\vgamma}_{i}\odot \vgamma_i^\perp }\\
 \vf\cdot ({\vgamma}_{i}^{\perp})^{2}
\end{bmatrix}.
     \end{equation}
      Using the linearity of both $\Rc$ and the inner product, we obtain 
      \begin{equation}
         \frac{d}{ds} \Rc ({\Sc}\vf) = \begin{bmatrix}
 \Rc\vf\cdot {\vgamma}(\vxi)\\
 \Rc\vf\cdot {\vgamma}^{\dag}(\vxi)\\
 \Rc\vf\cdot {\vgamma}^{\perp}(\vxi)
\end{bmatrix} = {\Qc(\vxi)}\Rc\vf.
      \end{equation}
      Therefore, $$\left[\Qc(\vxi)\right]^{-1}\frac{d}{ds} \Rc ({\Sc}\vf)(\vxi,s)= \Rc\vf(\vxi,s),$$
      which completes the proof of the theorem.
 \end{proof}
  \begin{cor}
  If $\Zc_{1}\cup\Zc_{2}$ is finite, then by applying $\Rc^{-1}$ to both sides of \eqref{eq: star inversion} we can recover $\vf(\vx)$ for all $\vx\in D_{1}$.
  \end{cor}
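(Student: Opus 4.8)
The plan is to build directly on Theorem~\ref{th:inversion of star transform S}, which already supplies the component-wise Radon transform $\Rc\vf(\vxi,s)$ for every $\vxi\in\Sb^1\setminus(\Zc_1\cup\Zc_2)$ and every $s\in\Rb$. The only thing left to do is to fill in the finitely many excluded directions and then invert $\Rc$ componentwise.

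First I would use the regularity of $\vf$. Since $\vf\in C_c^2\left(S^2;D_1\right)$, each scalar component $f_{ij}$ is continuous with compact support, so its Radon transform $\Rc f_{ij}(\vxi,s)$ depends continuously on $(\vxi,s)\in\Sb^1\times\Rb$; hence $\Rc\vf$ is a continuous $\Rb^3$-valued function on $\Sb^1\times\Rb$. Because $\Zc_1\cup\Zc_2$ is finite, the set $\Sb^1\setminus(\Zc_1\cup\Zc_2)$ is dense in $\Sb^1$, so the values furnished by \eqref{eq: star inversion} on this dense set determine $\Rc\vf(\vxi,s)$ for \emph{all} $(\vxi,s)$ by continuity. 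In other words, the star data of $\vf$ determines the full sinogram $\Rc\vf$ on $\Sb^1\times\Rb$.

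Next I would apply the classical $2$-dimensional Radon inversion (filtered back-projection, e.g.\ $f=\tfrac{1}{4\pi}\,\Rc^{*}\Lambda\,\Rc f$ with $\Lambda$ the ramp filter) separately to each of the three components $f_{11},f_{12},f_{22}$, recovering $f_{ij}(\vx)$ for every $\vx\in\Rb^2$, and in particular for $\vx\in D_1$. This is exactly what ``applying $\Rc^{-1}$ to both sides of \eqref{eq: star inversion}'' means once the continuous extension of the previous step has been carried out.

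The proof is short and I do not expect a real obstacle: the only subtlety is that the inversion of $\Rc$ requires the sinogram over \emph{all} directions, whereas \eqref{eq: star inversion} provides it only off the finite set $\Zc_1\cup\Zc_2$. This is resolved purely by continuity of $\Rc\vf$, which is why the finiteness hypothesis on $\Zc_1\cup\Zc_2$ is precisely what is needed. One should also note that no difficulty arises from $\left[\Qc(\vxi)\right]^{-1}$ degenerating as $\vxi$ approaches $\Zc_1\cup\Zc_2$, because the continuous extension is performed on $\Rc\vf$ itself rather than on the explicit expression $\left[\Qc(\vxi)\right]^{-1}\tfrac{d}{ds}\Rc(\Sc\vf)(\vxi,s)$.
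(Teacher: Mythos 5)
Your argument is correct and is exactly the route the paper intends: the corollary is stated as an immediate consequence of Theorem \ref{th:inversion of star transform S}, namely that \eqref{eq: star inversion} yields $\Rc\vf$ off the finite exceptional set and one then inverts $\Rc$ componentwise. Your additional observation that the finitely many missing directions are recovered by continuity of $\Rc\vf$ (or, equivalently, are irrelevant to the filtered back-projection integral) is precisely the detail the paper leaves implicit, so there is nothing to correct.
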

\subsection*{The cardinality of the sets $\Zc_{1}$ and $\Zc_{2}$}
It is obvious from the definition that the set $\Zc_1$ is finite. Let us investigate the cardinality of the set $\Zc_{2}$ by studying the cardinalities of its two parts. 

Recall from the proof of Lemma \ref{lem:zero sets of vgamma} that:
\begin{equation}
\vgamma^\dag(\vxi)  = - \sum_{i=1}^{m} \frac{c_{i}{\vgamma}_{i}\odot \vgamma_i^\perp }{\vxi \cdot {\vgamma}_{i}} 
= -\frac{\Pc^\dag(\vxi)}{\prod_{j=1}^{m} \vxi \cdot {\vgamma}_{j}} = -\frac{1}{\kappa(\vxi)}\left(\Pc^\dag_{11}(\vxi),\Pc^\dag_{12}(\vxi),\Pc^\dag_{22}(\vxi)\right),
\end{equation} 
where each $\Pc^\dag_{ij}(\vxi)$ is a homogeneous polynomial of degree $m-1$ in terms of  $\vxi=(\xi_{1},\xi_{2})$.
A vector $\vxi\in \Sb^1=\{(\xi_1,\xi_2): \xi_{1}^{2}+\xi_{2}^{2}-1=0\}$ is in the zero set of $\vgamma^\dag(\vxi)$ if and only if
$\Pc^\dag_{11}(\vxi)= \Pc^\dag_{12}(\vxi)= \Pc^\dag_{22}(\vxi)=0$. Using the classical B\'ezout's theorem with $\Pc_{c}(\vxi)= \xi_{1}^{2}+\xi_{2}^{2}-1$, we conclude that each $\Pc^\dag_{ij}(\vxi)$ either has finitely many zeros on $\Sb^{1}$, or is identically zero there (for more details, see \cite{Amb_Lat_star}). In a similar fashion, the set $\displaystyle \left\{\vxi:\sum_{i=1}^m\frac{c_i}{\vxi\cdot \vgamma_i} = 0\right\}$  is either finite or coincides with $\Sb^{1}\setminus \Zc_{1}$.  Hence, the set $\Zc_{2}$ is either finite or coincides with $\Sb^{1}\setminus \Zc_{1}$.

\vspace{1mm}

\begin{cor}\label{corr:invertibility of S}
The star transform is invertible if the set $\displaystyle \Sb^1\setminus\{\Zc_1\cup\Zc_2\}$ is non-empty.
\end{cor}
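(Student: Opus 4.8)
The plan is to reduce the statement to the corollary stated just before it, which already reconstructs $\vf$ as soon as $\Zc_1\cup\Zc_2$ is known to be finite; so the only new thing to verify is that the hypothesis — non-emptiness of $\Sb^1\setminus\{\Zc_1\cup\Zc_2\}$ — forces $\Zc_1\cup\Zc_2$ to be a finite set.

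First I would invoke the cardinality analysis carried out above: $\Zc_1$ is finite, and, by the B\'ezout-type argument applied to the homogeneous polynomials $\Pc^\dag_{11},\Pc^\dag_{12},\Pc^\dag_{22}$ (and to $\sum_{i=1}^m c_i/(\vxi\cdot\vgamma_i)$), the set $\Zc_2$ is \emph{either} finite \emph{or} equal to all of $\Sb^1\setminus\Zc_1$. In the second case $\Zc_1\cup\Zc_2=\Sb^1$, so its complement in $\Sb^1$ would be empty, contradicting the hypothesis. Hence $\Zc_2$ is finite, and therefore so is $\Zc_1\cup\Zc_2$.

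Once finiteness of $\Zc_1\cup\Zc_2$ is established, Theorem \ref{th:inversion of star transform S} gives, for every $\vxi$ in the co-finite set $\Sb^1\setminus(\Zc_1\cup\Zc_2)$ and every $s\in\Rb$, the identity $\Rc\vf(\vxi,s)=[\Qc(\vxi)]^{-1}\frac{d}{ds}\Rc(\Sc\vf)(\vxi,s)$. In particular, if $\Sc\vf\equiv 0$ then $\Rc\vf(\vxi,\cdot)\equiv 0$ for all such $\vxi$; since $\vf\in C_c^2(S^2;D_1)$, its component-wise Radon transform is continuous on $\Sb^1\times\Rb$, so vanishing on a co-finite set of directions forces $\Rc\vf\equiv 0$ everywhere, and injectivity of the $2$D Radon transform then yields $\vf\equiv 0$. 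Equivalently and constructively, one applies $\Rc^{-1}$ to \eqref{eq: star inversion} exactly as in the preceding corollary to recover $\vf(\vx)$ for all $\vx\in D_1$.

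There is no substantial obstacle beyond this bookkeeping: the analytic content is already contained in Theorem \ref{th:inversion of star transform S} and in the cardinality discussion of $\Zc_2$. The only point requiring a word of care is the passage from ``$\Rc\vf$ recovered for all but finitely many directions'' to ``$\Rc\vf$ recovered for all directions'', which rests solely on the continuity of $\vxi\mapsto\Rc\vf(\vxi,\cdot)$ for compactly supported continuous tensor fields, after which the standard inversion of the Radon transform applies.
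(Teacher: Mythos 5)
Your argument is correct and follows essentially the same route as the paper: the non-emptiness hypothesis rules out the alternative ``$\Zc_2$ coincides with $\Sb^1\setminus\Zc_1$'' in the paper's dichotomy, forcing $\Zc_1\cup\Zc_2$ to be finite, after which Theorem \ref{th:inversion of star transform S} and the preceding corollary (applying $\Rc^{-1}$ to \eqref{eq: star inversion}) recover $\vf$. Your added remark on continuity of $\Rc\vf$ in the direction variable, justifying the passage from co-finitely many directions to all directions, is a harmless elaboration of the same argument rather than a different approach.
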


%\vspace{1mm}

\begin{defn}\label{def:symmetric star transform}
A star transform $\Sc$ is called  \textbf{symmetric} if $m=2k$ for some $k\in \Nb$ and (after possible index rearrangement)
$\vgamma_{i+k}= - \vgamma_{i}$ with $c_{i+k}=c_{i}$ for all $i=1,2,\dots ,k.$
\end{defn}

\begin{theorem}\label{th:star_non symmetric}
A star transform $\Sc$ is invertible if and only if it is not symmetric.
\end{theorem}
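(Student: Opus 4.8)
The plan is to prove both directions by analyzing when the set $\Sb^1\setminus(\Zc_1\cup\Zc_2)$ is empty, and then invoke Corollary \ref{corr:invertibility of S}. First I would establish the easier direction: if $\Sc$ is symmetric, then $\Sc$ is not invertible. The idea here is that for a symmetric configuration, pairing the term $i$ with the term $i+k$ (where $\vgamma_{i+k}=-\vgamma_i$ and $c_{i+k}=c_i$), we have $(\vgamma_{i+k})^2=(-\vgamma_i)^2=\vgamma_i^2$, $(\vgamma_{i+k}^\perp)^2=(\vgamma_i^\perp)^2$, but $\vxi\cdot\vgamma_{i+k}=-\vxi\cdot\vgamma_i$, so in the sums defining $\vgamma(\vxi)$, $\vgamma^\dag(\vxi)$, $\vgamma^\perp(\vxi)$ the $i$-th and $(i+k)$-th contributions cancel pointwise: $\frac{c_i\vgamma_i^2}{\vxi\cdot\vgamma_i}+\frac{c_{i+k}\vgamma_{i+k}^2}{\vxi\cdot\vgamma_{i+k}}=0$. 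Hence $\vgamma(\vxi)\equiv\vgamma^\dag(\vxi)\equiv\vgamma^\perp(\vxi)\equiv 0$ identically on $\Sb^1\setminus\Zc_1$, so $\Zc_2=\Sb^1\setminus\Zc_1$ and $\Sb^1\setminus(\Zc_1\cup\Zc_2)=\emptyset$. By Corollary \ref{corr:invertibility of S} (or rather its contrapositive, together with the fact that $\Qc(\vxi)$ is the zero matrix wherever it is defined, so the inversion formula of Theorem \ref{th:inversion of star transform S} carries no information), $\Sc$ fails to be invertible; one can exhibit a nonzero $\vf$ in the kernel explicitly.

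The harder direction is: if $\Sc$ is not symmetric, then it is invertible, i.e. $\Sb^1\setminus(\Zc_1\cup\Zc_2)\neq\emptyset$. Since $\Zc_1$ is always finite, it suffices to show that $\Zc_2$ is finite, which by the dichotomy established just before Corollary \ref{corr:invertibility of S} amounts to showing that $\Zc_2\neq\Sb^1\setminus\Zc_1$. Equivalently, I must show that not all three of the following hold identically on $\Sb^1\setminus\Zc_1$: $\vgamma^\dag(\vxi)=0$ and $\sum_i \frac{c_i}{\vxi\cdot\vgamma_i}=0$. In fact, using the relations \eqref{eq:relation between Pii and Pii dag} from Lemma \ref{lem:zero sets of vgamma}, $\vgamma^\dag(\vxi)\equiv 0$ on $\Sb^1$ is equivalent to $\Pc_{12}(\vxi)\equiv 0$ and $\Pc_{11}(\vxi)\equiv\Pc_{22}(\vxi)$ on $\Sb^1$; recalling $\Pc_{11}+\Pc_{22}=\sum_i\kappa_i(\vxi)c_i$ (whose vanishing on $\Sb^1$ is, via $\kappa(\vxi)\sum_i\frac{c_i}{\vxi\cdot\vgamma_i}=\sum_i\kappa_i(\vxi)c_i$, exactly the second defining condition of $\Zc_2$), the statement to disprove becomes: $\Pc_{12}\equiv 0$, $\Pc_{11}\equiv\Pc_{22}$, and $\Pc_{11}+\Pc_{22}\equiv 0$ on $\Sb^1$ — i.e. $\Pc_{11}\equiv\Pc_{12}\equiv\Pc_{22}\equiv 0$ on $\Sb^1$, which is $\vgamma(\vxi)\equiv 0$ on $\Sb^1$. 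So the whole theorem reduces to the clean claim: \emph{$\vgamma(\vxi)=-\sum_{i=1}^m\frac{c_i\vgamma_i^2}{\vxi\cdot\vgamma_i}$ vanishes identically on $\Sb^1\setminus\Zc_1$ if and only if $\Sc$ is symmetric.}

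To prove this reduced claim, I would analyze the rational vector-valued function $\vxi\mapsto \vgamma(\vxi)$ on the projective line (it is homogeneous of degree $-1$ in $\vxi$, so it is really a function on $\Rb^2\setminus\{0\}$ with a pole along each line $\vxi\cdot\vgamma_i=0$). The key observation is a partial-fractions / residue argument: $\vgamma(\vxi)\equiv 0$ forces the residue of $\vgamma(\vxi)$ at each pole $\vxi\cdot\vgamma_i=0$ to cancel against contributions from other poles. If all $\vgamma_i$ are pairwise non-collinear (the generic case), then the $m$ lines $\vxi\cdot\vgamma_i=0$ are distinct, each $\frac{c_i\vgamma_i^2}{\vxi\cdot\vgamma_i}$ has its own simple pole, and since $c_i\vgamma_i^2\neq 0$ (as $c_i\neq 0$ and $\vgamma_i$ is a unit vector), the residues cannot cancel — so $\vgamma\not\equiv 0$, hence the configuration is automatically non-symmetric and invertible. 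When some directions coincide up to sign, say a maximal collinear group $\{i: \vgamma_i=\pm\vgamma_{i_0}\}$, the pole along $\vxi\cdot\vgamma_{i_0}=0$ has residue (up to scalar) equal to $\big(\sum_{\vgamma_i=\vgamma_{i_0}}c_i - \sum_{\vgamma_i=-\vgamma_{i_0}}c_i\big)\vgamma_{i_0}^2$, because $(\pm\vgamma_{i_0})^2=\vgamma_{i_0}^2$ while $\vxi\cdot(-\vgamma_{i_0})=-\vxi\cdot\vgamma_{i_0}$ flips the sign. For $\vgamma$ to vanish we need this residue to be zero for every such group, i.e. $\sum_{\vgamma_i=\vgamma_{i_0}}c_i=\sum_{\vgamma_i=-\vgamma_{i_0}}c_i$ for each direction; after collecting terms with equal directions (which, since all $\vgamma_i$ are distinct by hypothesis, means each group $\{\vgamma_{i_0}\}$ and each group $\{-\vgamma_{i_0}\}$ is a singleton when both appear, and the condition becomes $c_i=c_j$ whenever $\vgamma_j=-\vgamma_i$, and moreover every $\vgamma_i$ must be paired with its negative $-\vgamma_i$ also appearing in the list) one reads off precisely the definition of a symmetric star transform. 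The main obstacle I anticipate is making the residue / linear-independence argument fully rigorous over $\Rb$ — in particular handling the case where several of the $\vgamma_i$ point in the same direction (not possible here since they are assumed distinct, but one still must separate $\vgamma_i$ from $-\vgamma_i$ carefully) and verifying that the vectors $\{\vgamma_i^2\}$ for distinct directions $\vgamma_i\in\Sb^1$ modulo sign are genuinely "independent enough" that no cancellation across different poles can occur; a clean way to do this is to multiply through by $\kappa(\vxi)=\prod_j\vxi\cdot\vgamma_j$ and compare the resulting polynomial identity $\sum_i c_i\kappa_i(\vxi)\vgamma_i^2\equiv 0$ coefficient-by-coefficient, evaluating at $\vxi=\vgamma_{i_0}^\perp$ (which kills every $\kappa_i$ with $\vgamma_i\neq\pm\vgamma_{i_0}$) to isolate each direction's contribution.
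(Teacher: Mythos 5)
Your reduction in the direction ``not symmetric $\Rightarrow$ invertible'' has a genuine logical gap: you treat $\Zc_2$ as if it were the \emph{intersection} of the two sets in its definition, when it is their \emph{union}. The sufficient criterion of Corollary \ref{corr:invertibility of S} already becomes unusable as soon as \emph{one} of the two numerator polynomials vanishes identically on $\Sb^1$ --- either $\Pc^\dag\equiv 0$ (i.e.\ $\vgamma^\dag(\vxi)\equiv 0$) or $\Pc_{11}+\Pc_{22}\equiv 0$ (i.e.\ $\sum_{i=1}^m c_i/(\vxi\cdot\vgamma_i)\equiv 0$) --- since then that single zero set already equals $\Sb^1\setminus\Zc_1$ and $\Sb^1\setminus(\Zc_1\cup\Zc_2)=\emptyset$. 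So what you must prove is the pair of implications ``$\Pc^\dag\equiv 0\Rightarrow\Sc$ symmetric'' \emph{and} ``$\Pc_{11}+\Pc_{22}\equiv 0\Rightarrow\Sc$ symmetric''. Your reduction instead disproves only their \emph{conjunction}, which via \eqref{eq:relation between Pii and Pii dag} is equivalent to $\vgamma(\vxi)\equiv 0$; this is strictly weaker than what is needed. A priori a non-symmetric configuration could satisfy $\Pc^\dag\equiv 0$ while $\Pc_{11}+\Pc_{22}\not\equiv 0$ (so $\vgamma\not\equiv 0$): your reduced claim says nothing about it, yet $\Zc_2=\Sb^1\setminus\Zc_1$ and your route to invertibility collapses. (That this situation cannot occur is precisely part of the content of the theorem.) The good news is that your evaluation device --- multiply through by $\prod_j\vxi\cdot\vgamma_j$ and evaluate at $\vxi=\vgamma_\ell^\perp$ to isolate the $\ell$-th term --- is exactly the right tool; you just need to run it on $\Pc^\dag$ and on $\sum_i c_i\prod_{j\ne i}\vxi\cdot\vgamma_j$ \emph{separately} (first forcing the directions to occur in opposite pairs, then forcing $c_i=c_{i+k}$), which is what the paper does, rather than only on $\Pc$.

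A secondary weakness is in the direction ``symmetric $\Rightarrow$ not invertible''. The pointwise cancellation you observe indeed gives $\vgamma\equiv\vgamma^\dag\equiv\vgamma^\perp\equiv 0$, hence $\Qc(\vxi)\equiv 0$; but that only shows the particular inversion method of Theorem \ref{th:inversion of star transform S} yields no information. Corollary \ref{corr:invertibility of S} is a sufficient condition for invertibility, so neither its contrapositive nor its converse lets you conclude non-invertibility. You assert that a nonzero kernel element can be exhibited but do not produce one; to close this you should either argue, as the paper does, that for a symmetric star the data reduces to straight-line (full-line) integrals along the $k$ fixed directions $\vgamma_1,\dots,\vgamma_k$ --- i.e.\ Radon data of the components at finitely many angles, which cannot determine $\vf$ --- or construct an explicit kernel element, e.g.\ $\vf=(\varphi,0,0)$ with $\varphi=D_{\vgamma_1}\cdots D_{\vgamma_k}\psi$ for a bump function $\psi$, whose Radon transform vanishes at those angles.
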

\begin{proof}
Suppose $\Sc$ is a symmetric star transform with $2k$ branches. Then $\Sc\vf$ contains less information than the standard (longitudinal, transverse, and mixed) integral transforms along straight lines limited to $k$ fixed directions, which is clearly not sufficient for the recovery of $\vf$. Thus, a symmetric star transform $\Sc$ is not invertible.  

To prove the statement in the other direction, assume that the star transform $\Sc$ with branches in the directions $\vgamma_{1},\vgamma_{2},\dots,\vgamma_{m}$ is not invertible. Then, by discussion preceding Corollary \ref{corr:invertibility of S}, for every $\vxi\in\mathbb{S}^1$ either $\Pc^\dag(\vxi)=0$ or $\sum_{i=1}^m c_i \left( \prod_{j\ne i}\vxi \cdot {\vgamma}_{j}\right)  = 0$. For $\vxi= \vgamma_{1}^{\perp}$, these two conditions can be rewritten as 
\begin{align*}
   c_{1}\prod_{j\ne 1}\vgamma_1^\perp \cdot {\vgamma}_{j}\left(\vgamma_{1}\odot \vgamma_1^\perp\right) = 0 \ \ \mbox{ or }\ \  c_{1}\prod_{j\ne 1}\vgamma_1^\perp \cdot {\vgamma}_{j} = 0.
\end{align*}
Since $c_1 \neq 0$, both of the above relations imply that
$$
\prod_{j\ne 1}\vgamma_1^\perp \cdot {\vgamma}_{j} = 0.
$$
Hence, $ \vgamma_{1}^{\perp}\cdot\vgamma_{j} =0$ for some $j =2,\dots,m$. Therefore, $ \vgamma_{1}=c\vgamma_{j}$ for some non-zero $c\in \Rb$. Furthermore, since $\vgamma_{1}$ is a unit vector and $\vgamma_{1}, \vgamma_{2},\dots ,\vgamma_{m}$ are distinct, we must have $\vgamma_{1}= -\vgamma_{j}$.  By repeating the same calculations with  $\vxi= \vgamma_{\ell}^{\perp}$ for all $\ell$, we  conclude that if the star transform is not invertible, then its rays have to point in pairwise opposite directions.

Without loss of generality, let us take $m = 2k$, $\vgamma_{i+k}= - \vgamma_{i}$, $i=1,\dots ,k$ and no pairs of unit vectors from  $\vgamma_{1},\vgamma_{2},\dots ,\vgamma_{k}$ are  collinear. To complete the proof, it remains to show that $c_{i+k} = c_{i}$ for all $i=1,2,\dots,k$. Using $\vgamma_{i+k}= - \vgamma_{i}$, $i=1, \dots,k$, we have 
$$\Pc^\dag(\vxi)=  (-1)^{k} \prod_{j=1}^{k}\left(\vxi \cdot {\vgamma}_{j}\right)\sum_{i=1}^{k}\left[\left(c_{i}-c_{i+k}\right) \prod_{\substack{j=1 \\ j\neq i}}^{k}\vxi \cdot {\vgamma}_{j}\right]\vgamma_{i}\odot\vgamma_i^\perp$$
and 
$$\sum_{i=1}^m c_i \left( \prod_{j\ne i}\vxi \cdot {\vgamma}_{j}\right) =  (-1)^{k} \prod_{j=1}^{k}\left(\vxi \cdot {\vgamma}_{j}\right)\sum_{i=1}^{k}\left(c_{i}-c_{i+k}\right) \left(\prod_{\substack{j=1 \\ j\neq i}}^{k}\vxi \cdot {\vgamma}_{j}\right).$$
As mentioned earlier, $\Sc$ is not invertible implies either $\Pc^\dag(\vxi)\equiv0$ or $\sum_{i=1}^m c_i \left( \prod_{j\ne i}\vxi \cdot {\vgamma}_{j}\right)  \equiv 0$. In both cases, we have 
$$\sum_{i=1}^{k}\left[\left(c_{i}-c_{i+k}\right) \prod_{\substack{j=1 \\ j\neq i}}^{k}\vxi \cdot {\vgamma}_{j}\right] \equiv 0.$$ 
For $\vxi= \vgamma_{1}^{\perp}$ this simplifies to 
$$ \left(c_{1}-c_{1+k}\right) \prod_{\substack{j=2 \\ j\neq 1}}^{k}\vgamma_{1}^{\perp} \cdot {\vgamma}_{j}=0.$$
Clearly, $\displaystyle \prod\limits_{\substack{j=2 \\ j\neq 1}}^{k}\vgamma_{1}^{\perp} \cdot {\vgamma}_{j}\neq 0$, otherwise $\vgamma_{j} = -\vgamma_{1}$ for some $j = 2,\dots ,k$, which is not possible. Therefore, we must have $c_{1} = c_{1+k}$. Repeating the procedure with $\vxi = \vgamma_{i}^{\perp}$ for $i = 2, \dots , k$ we get $c_{i} = c_{i+k}$ for all $ i= 1,2,\dots,k$. This shows that the star transform $\Sc$ under consideration  is symmetric. 
\end{proof}
\begin{cor}
A star transform $\Sc$ with an odd number of rays acting on symmetric $2$-tensor fields is invertible.
\end{cor}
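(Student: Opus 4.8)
The plan is to obtain this immediately from Theorem \ref{th:star_non symmetric} together with Definition \ref{def:symmetric star transform}. The key observation is purely combinatorial: by definition a symmetric star transform must have $m=2k$ branches for some $k\in\Nb$, so the number of rays of a symmetric star transform is necessarily even. Consequently, if a star transform $\Sc$ has an odd number of rays, it cannot possibly satisfy the defining condition of symmetry, and hence $\Sc$ is not symmetric.

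With that in hand, the proof is a one-step deduction. First I would invoke Definition \ref{def:symmetric star transform} to note that an odd number of branches rules out symmetry. Then I would apply the characterization of invertibility from Theorem \ref{th:star_non symmetric}, which states that a star transform is invertible if and only if it is not symmetric. Combining these two facts yields that every star transform with an odd number of rays is invertible, completing the proof.

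There is no genuine obstacle here; the corollary is a direct specialization of Theorem \ref{th:star_non symmetric}, and the only thing to check is the trivial parity remark that a set of the form $\{\vgamma_1,\dots,\vgamma_k,-\vgamma_1,\dots,-\vgamma_k\}$ always has even cardinality. The substantive content — that non-symmetry is equivalent to invertibility — has already been established, so the argument for the corollary is essentially a single sentence.
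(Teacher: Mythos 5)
Your argument is correct and is exactly the intended one: the paper leaves this corollary as an immediate consequence of Theorem \ref{th:star_non symmetric}, since by Definition \ref{def:symmetric star transform} a symmetric star transform necessarily has an even number of branches. Nothing further is needed.
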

\begin{rem}
When $m=2$ and $c_{1}=c_{2}$ then $\Sc\vf =(\Lc\vf, \Mc\vf, \Tc\vf)$. So, Theorem \ref{th:star_non symmetric} provides another approach to recovering a complete symmetric 2-tensor field from its longitudinal, transverse, and mixed V-line transforms. Note that, in this case, the matrix $\Qc(\vxi)$ is undefined if and only if $\vgamma_{1}=-\vgamma_{2}$, and the corresponding star transform is not invertible.
\end{rem}

%%%%%%%%%%%%%%%%%%%%%%%%%%%%%%%%%%%%%%%%%%%%%%%%%%%
\section{Additional remarks}\label{sec:add_remarks}
\begin{enumerate}
\item Certain facts established in this paper have notable differences with the results obtained previously in the similar problems for ray transforms on tensor fields and V-line transforms on vectors fields. In particular:
\begin{itemize}
\item In the case of ray transforms (integrating along straight lines), it is known that a symmetric $m$-tensor field $f$ can be recovered from the knowledge of its first $(m+1)$-integral moments. In the V-line setup, the results are identical for vector fields ($m = 1$) as reported in an earlier work \cite{Gaik_Mohammad_Rohit}, but fail for symmetric 2-tensor fields (see Lemma \ref{lem: recurrence relation for L} and the discussion below it). Moreover, in the case of transforms integrating symmetric 2-tensor fields along V-lines, we can write the $k$-th integral moments for $k\geq 2$ in terms of $0^{th}$ and $1^{st}$ order moments. Such a statement does not hold for the ray transforms of symmetric 2-tensor fields. 
\item It is well known that the kernel of the longitudinal (transverse) ray transform consists of potential (solenoidal) tensor fields. Similar to the discussion in the first point above, the kernel descriptions for V-line and straight-line setups are the same for vector fields ($m=1$). The difference appears again for symmetric $2$-tensor fields; the kernel of the longitudinal (transverse) VLT does not comprise the potential (solenoidal) tensor fields, as discussed in the Theorem \ref{th:kernel description}. 
\end{itemize}
\item The results obtained in this paper naturally lead to several possible directions of future research on the subject.
\begin{itemize}
    \item In this work we derived a number of exact formulas and algorithms for the reconstruction of a compactly supported, symmetric 2-tensor field from various combinations of its VLTs. The numerical implementation of these algorithms is an elaborate task, and the authors plan to address it in a future publication. 
    \item A natural extension of this work is to consider the corresponding problems for higher-order ($m>2$) tensor fields  in the 2-dimensional Euclidean space. 
    \item Another direction of future research is to study the higher dimensional analogs of these transforms. In the case of scalar fields, various conical transforms have been analyzed as generalization of the VLTs in $\mathbb{R}^n$, $n\ge3$ (e.g. see \cite{amb-lat_2019,  gouia2014analytical, Gouia_Amb_V-line}). Building up on these ideas, one can consider the problem of recovering a tensor field of order $m\ge2$ from various sets of properly defined conical transforms applied to that field. 
    The authors currently work on some of these questions and plan to address them in a separate publication.
\end{itemize}  
\end{enumerate}

%%%%%%%%%%%%%%%%%%%%%%%%%%%%%%%%%%%%%%%%%%%%%%%%%%%
\section{Acknowledgements}\label{sec:acknowledge}
GA was partially supported by the NIH grant U01-EB029826. RM was partially supported by SERB SRG grant No. SRG/2022/000947. IZ was supported by the Prime Minister's Research Fellowship from the Government of India.

%------------------------------------------
\bibliography{references}

\begin{thebibliography}{10}

\bibitem{abhishek2020support}
Anuj Abhishek.
\newblock Support theorems for the transverse ray transform of tensor fields of
  rank $m$.
\newblock {\em Journal of Mathematical Analysis and Applications},
  485(2):123828, 2020.

\bibitem{abhishek2019support}
Anuj Abhishek and Rohit~K. Mishra.
\newblock Support theorems and an injectivity result for integral moments of a
  symmetric $m$-tensor field.
\newblock {\em Journal of Fourier Analysis and Applications}, 25(4):1487--1512,
  2019.

\bibitem{amb_2012}
Gaik Ambartsoumian.
\newblock Inversion of the {V}-line {R}adon transform in a disc and its
  applications in imaging.
\newblock {\em Computers \& Mathematics with Applications}, 64(3):260--265,
  2012.
\newblock Mathematical Methods and Models in Biosciences.

\bibitem{amb-book}
Gaik Ambartsoumian.
\newblock {\em Generalized Radon Transforms and Imaging by Scattered Particles:
  Broken Rays, Cones, and Stars in Tomography}.
\newblock World Scientific, 2023.

\bibitem{amb-lat_2019}
Gaik Ambartsoumian and Mohammad~J. Latifi.
\newblock The {V}-line transform with some generalizations and cone
  differentiation.
\newblock {\em Inverse Problems}, 35(3):034003, 2019.

\bibitem{Amb_Lat_star}
Gaik Ambartsoumian and Mohammad~J. Latifi.
\newblock Inversion and symmetries of the star transform.
\newblock {\em The Journal of Geometric Analysis}, 31(11):11270--11291, 2021.

\bibitem{Gaik_Mohammad_Rohit}
Gaik Ambartsoumian, Mohammad~J. Latifi, and Rohit~K. Mishra.
\newblock Generalized {V}-line transforms in {2D} vector tomography.
\newblock {\em Inverse Problems}, 36(10):104002, 2020.

\bibitem{Gaik_Mohammad_Rohit_2023}
Gaik Ambartsoumian, Mohammad~J. Latifi, and Rohit~K. Mishra.
\newblock Numerical implementation of generalized {V}-line transforms on {2D}
  vector fields and their inversions.
\newblock {\em arXiv preprint arXiv:2305.08975}, 2023.

\bibitem{Ambartsoumian_Moon_broken_ray_article}
Gaik Ambartsoumian and Sunghwan Moon.
\newblock {A series formula for inversion of the V-line Radon transform in a
  disc}.
\newblock {\em Computers \& Mathematics with Applications}, 66(9):1567--1572,
  2013.

\bibitem{ambartsoumian2016numerical}
Gaik Ambartsoumian and Souvik Roy.
\newblock Numerical inversion of a broken ray transform arising in single
  scattering optical tomography.
\newblock {\em IEEE Transactions on Computational Imaging}, 2(2):166--173,
  2016.

\bibitem{Denisiuk_2023}
Aleksander Denisiuk.
\newblock Iterative inversion of the tensor momentum x-ray transform.
\newblock {\em Inverse Problems}, 39(10):105002, 2023.

\bibitem{Denisjuk_Paper}
Alexander Denisjuk.
\newblock Inversion of the {X}-ray transform for 3{D} symmetric tensor fields
  with sources on a curve.
\newblock {\em Inverse Problems}, 22(2):399--411, 2006.

\bibitem{derevtsov3}
Evgeny~Yu. Derevtsov and Ivan~E. Svetov.
\newblock Tomography of tensor fields in the plane.
\newblock {\em Eurasian Journal of Mathematical and Computer Applications},
  3(2):24--68, 2015.

\bibitem{FMS-PhysRev-10}
Lucia Florescu, Vadim~A. Markel, and John~C. Schotland.
\newblock Single-scattering optical tomography: Simultaneous reconstruction of
  scattering and absorption.
\newblock {\em Physical Review E}, 81:016602, Jan 2010.

\bibitem{Florescu-Markel-Schotland}
Lucia Florescu, Vadim~A. Markel, and John~C. Schotland.
\newblock Inversion formulas for the broken-ray {R}adon transform.
\newblock {\em Inverse Problems}, 27(2):025002, 2011.

\bibitem{florescu2018}
Lucia Florescu, Vadim~A Markel, and John~C Schotland.
\newblock Nonreciprocal broken ray transforms with applications to fluorescence
  imaging.
\newblock {\em Inverse Problems}, 34(9):094002, 2018.

\bibitem{FMS-09}
Lucia Florescu, John~C. Schotland, and Vadim~A. Markel.
\newblock Single-scattering optical tomography.
\newblock {\em Phys. Rev. E}, 79:036607, Mar 2009.

\bibitem{gouia2014analytical}
Rim Gouia-Zarrad.
\newblock Analytical reconstruction formula for $n$-dimensional conical {R}adon
  transform.
\newblock {\em Computers \& Mathematics with Applications}, 68(9):1016--1023,
  2014.

\bibitem{Gouia_Amb_V-line}
Rim Gouia-Zarrad and Gaik Ambartsoumian.
\newblock Exact inversion of the conical {R}adon transform with a fixed opening
  angle.
\newblock {\em Inverse Problems}, 30(4):045007, 2014.

\bibitem{Kats_Krylov-13}
Alexander Katsevich and Roman Krylov.
\newblock Broken ray transform: inversion and a range condition.
\newblock {\em Inverse Problems}, 29(7):075008, 2013.

\bibitem{krishnan2019momentum}
Venkateswaran~P. Krishnan, Ramesh Manna, Suman~K. Sahoo, and Vladimir~A.
  Sharafutdinov.
\newblock Momentum ray transforms.
\newblock {\em Inverse Problems \& Imaging}, 13(3):679--701, 2019.

\bibitem{krishnan2020momentum}
Venkateswaran~P. Krishnan, Ramesh Manna, Suman~K. Sahoo, and Vladimir~A.
  Sharafutdinov.
\newblock Momentum ray transforms, {II}: range characterization in the
  {S}chwartz space.
\newblock {\em Inverse Problems}, 36(4):045009, 2020.

\bibitem{krishnan2018microlocal}
Venkateswaran~P. Krishnan and Rohit~K. Mishra.
\newblock Microlocal analysis of a restricted ray transform on symmetric
  $m$-tensor fields in $\mathbb{R}^n$.
\newblock {\em SIAM Journal on Mathematical Analysis}, 50(6):6230--6254, 2018.

\bibitem{krishnan2019solenoidal}
Venkateswaran~P. Krishnan, Rohit~K. Mishra, and Fran{\c{c}}ois Monard.
\newblock On solenoidal-injective and injective ray transforms of tensor fields
  on surfaces.
\newblock {\em Journal of Inverse and Ill-posed Problems}, 27(4):527--538,
  2019.

\bibitem{VRS}
Venkateswaran~P. Krishnan, Rohit~K. Mishra, and Suman~K. Sahoo.
\newblock Microlocal inversion of a 3-dimensional restricted transverse ray
  transform on symmetric tensor fields.
\newblock {\em Journal of Mathematical Analysis and Applications},
  495(1):124700, 2021.

\bibitem{Kats_Krylov-15}
Roman Krylov and Alexander Katsevich.
\newblock Inversion of the broken ray transform in the case of energy dependent
  attenuation.
\newblock {\em Physics in Medicine \& Biology}, 60(11):4313--4334, 2015.

\bibitem{Louis_2022}
Alfred~K. Louis.
\newblock Inversion formulae for ray transforms in vector and tensor
  tomography.
\newblock {\em Inverse Problems}, 38(6):065008, 2022.

\bibitem{Rohit_Suman_2021}
Rohit~K. Mishra and Suman~K. Sahoo.
\newblock Injectivity and range description of integral moment transforms over
  $m$-tensor fields in $\mathbb{R}^n$.
\newblock {\em SIAM Journal on Mathematical Analysis}, 53(1):253--278, 2021.

\bibitem{Rohit_Suman_2022}
Rohit~Kumar Mishra and Suman~Kumar Sahoo.
\newblock The generalized {S}aint {V}enant operator and integral moment
  transforms.
\newblock {\em Proc. Amer. Math. Soc.}, 151(1):189--199, 2023.

\bibitem{Monard2}
Fran\c{c}ois Monard.
\newblock Efficient tensor tomography in fan-beam coordinates. {II}:
  {A}ttenuated transforms.
\newblock {\em Inverse Probl. Imaging}, 12(2):433--460, 2018.

\bibitem{Paternain_Salo_Uhlmann_2014}
Gabriel~P. Paternain, Mikko Salo, and Gunther Uhlmann.
\newblock Tensor tomography: progress and challenges.
\newblock {\em Chinese Ann. Math. Ser. B}, 35(3):399--428, 2014.

\bibitem{SADIQ2016}
Kamran Sadiq, Otmar Scherzer, and Alexandru Tamasan.
\newblock On the x-ray transform of planar symmetric 2-tensors.
\newblock {\em Journal of Mathematical Analysis and Applications},
  442(1):31--49, 2016.

\bibitem{Sharafutdinov_Book}
Vladimir~A. Sharafutdinov.
\newblock {\em Integral Geometry of Tensor Fields}.
\newblock De Gruyter, Berlin, New York, 1994.

\bibitem{Sherson}
Brian Sherson.
\newblock {\em Some Results in Single-Scattering Tomography}.
\newblock PhD thesis, Oregon State University, 2015.
\newblock {PhD} Advisor: D. Finch.

\bibitem{Vertgeim2000}
Lev~B. Vertgeim.
\newblock Integral geometry problems for symmetric tensor fields with
  incomplete data.
\newblock {\em Journal of Inverse and Ill-posed Problems}, 8(3):355--364, 2000.

\bibitem{walker2019broken}
Michael~R Walker and Joseph~A. O’Sullivan.
\newblock The broken ray transform: additional properties and new inversion
  formula.
\newblock {\em Inverse Problems}, 35(11):115003, 2019.

\bibitem{ZSM-star-14}
Fan Zhao, John~C. Schotland, and Vadim~A. Markel.
\newblock Inversion of the star transform.
\newblock {\em Inverse Problems}, 30(10):105001, 2014.

\end{thebibliography}
\bibliographystyle{plain}

\end{document}